\theoremstyle{theorem}
\newtheorem{theorem}{Theorem}[section]
\newtheorem{lemma}[theorem]{Lemma}
\newtheorem{corollary}[theorem]{Corollary}
\theoremstyle{definition}
\newtheorem{definition}[theorem]{Definition}
\newcommand{\lapl}{L}
\newcommand{\kirchhoff}{Kf}
\newcommand{\kemeny}{\mathscr{K}}
\newcommand{\hidden}[1]{}
\title{Resistance distance, Kirchhoff index, and Kemeny's constant in flower graphs}
\author{Nolan Faught\footnote{Department of Mathematics, Brigham Young University, Provo UT, USA, faught3@gmail.com}, Mark Kempton\footnote{Depatrment of Mathematics, Brigham Young University, Provo UT, USA, mkempton@mathematics.byu.edu}, and Adam Knudson\footnote{Department of Mathematics, Brigham Young University, Provo UT, USA, adamarstk@yahoo.com}}
\date{}
\begin{document}

\maketitle

\begin{abstract}
    We obtain a general formula for the resistance distance (or effective resistance) between any pair of nodes in a general family of graphs which we call flower graphs.  Flower graphs are obtained from identifying nodes of multiple copies of a given base graph in a cyclic way.  We apply our general formula to two specific families of flower graphs, where the base graph is either a complete graph or a cycle.  We also obtain bounds on the Kirchhoff index and Kemeny's constant of general flower graphs using our formula for resistance.  For flower graphs whose base graph is a complete graph or a cycle, we obtain exact, closed form expressions for the Kirchhoff index and Kemeny's constant. 
\end{abstract}

\section{Introduction}
    The resistance distance (also called effective resistance) is a tool motivated by ideas from electrical network theory and applications in chemistry that has proven valuable in the study of graphs.  The resistance between two vertices of a graph is defined as follows (see \cite{GraphsAndMatrices}, for example).
    \begin{definition}
        Let $G$ be a connected graph with vertex set $V(G) = \{1,\hdots,n\}$ and let $\lapl$ denote the Laplacian matrix of $G$. The \emph{effective resistance} or \emph{resistance distance} between two vertices $i, j$ is 
        \[r_G(i, j) = (e_i - e_j)^T \lapl^\dagger (e_i - e_j),\]
        where $e_i$ denotes the standard unit vector with a $1$ in the $i$th position and $0$ elsewhere, and $\lapl^\dagger$ represents the Moore-Penrose pseudoinverse of the Laplacian matrix. The \emph{resistance matrix} of $G$ is the matrix whose $i-jth$ entry is $r_G(i, j)$.
    \end{definition}
    
    The resistance distance defines a metric on a graph, and thus gives geometrical insight into graph structure.  The resistance distance has, for example, been applied in graph theory to the areas of link prediction \cite{barrett2019resistance,pachev2018fast} and graph sparsification \cite{SpielSparse}. Resistance distance also has deep connections to the study of random walks on graphs \cite{doylesnell,kirkland2016kemeny}.  A growing literature in graph theory addresses methods for computing the resistance distance in graphs and computing the resistance distance in various families of graphs; see for instance \cite{bapatdvi,barrett2019resistance,2Forests,essam2014resistance,lostinspace,flower,zhang2007resistance} among others.
    
    Resistance distance is closely related to two important constants in graph theory: the Kirchhoff index of a graph, and Kemeny's constant of a graph.  The Kirchhoff index is a measure of the total resistance in a graph, and is an important quantity in electrical network theory that has been widely studied (for instance, see \cite{palacios2001closed,palacios2015kirchhoff,peng2017kirchhoff,yang2008unicyclic, zhang2007resistance} and references therein).  Kemeny's constant is a parameter associated to a random walk on a graph that gives a measure of the average time a random walk takes to reach a vertex \cite{kirkland2012group}.  Kemeny's constant also gives a measure of how well connected a graph is \cite{breen2019computing}.  From work in \cite{kirkland2016kemeny}, Kemeny's constant of a graph can be computed directly if all resistances in the graph are known (see Theorem \ref{thm:kemeny} below).
    
    Recent research in \cite{2Forests} gives a formula that expresses the resistance distance between vertices on a graph with a 2-separation (two vertices whose removal disconnects the graph) in terms of resistances in the subgraphs involved in the 2-separation.  In this paper, we make use of these results to derive an explicit formula for the resistance distance in a general family of graphs which we refer to as flower graphs (see Theorem \ref{thm:mostgen} below).  Given any base graph $G$, the $n$th flower graph of $G$ is the graph obtained by taking $n$ copies of $G$ and identifying a selected pair of vertices in each copy in a cyclic nature.  See Figure \ref{fig:general_flower} for an illustration.  The precise definition is in Definition \ref{def:flower}.  
    \begin{figure}[H]\label{fig:general_flower}
        \centering
        \begin{center}
        \begin{tikzpicture}
        \tikzstyle{every node}=[circle, fill=black, inner sep=1pt]
        \draw{ \foreach \x in {0,60,120,180,240,300} {
        (\x:1)node{}
        }
        };
        \foreach \x in {30,90,150,210,270,330}{
        \draw[rotate around={\x: (\x:1)},](\x:1) ellipse(10 pt and 15 pt);
        }
        \tikzstyle{every node}=[circle, draw=none, fill=white, minimum width = 8 pt, inner sep=1pt]
        \draw \foreach \x in {30,90,150,210,270,330}{
        (\x:1)node[]{$G$}
        };
        \end{tikzpicture}{}
        \end{center}
        \caption{The $6$th flower graph of $G$}
        \label{fig:flowergraphG}
    \end{figure}
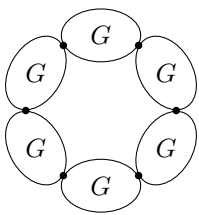
    
    With our explicit formula for resistance, we are able to show that the maximum resistance in a flower graph becomes unbounded as $n$ approaches infinity.  We are also able to bound the Kirchhoff index and Kemeny's constant in general flower graphs.  In addition, we apply our results to some specific families of flower graphs, namely those where the base graph is a complete graph, and those where the base graph is a simple cycle. This yields very simple formulas for the resistance in these specific families of flower graphs.  Using these, we are further able to compute exact formulas for the Kirchhoff index and Kemeny's constant for these graphs.  
    
    We remark that the family of flower graphs we have defined here can be viewed as a generalization of the family of graphs referred to as $(x,y)$-flower graphs in \cite{flower}, in which the resistance of those graphs is obtained.  Our general construction also contains as an example the Sierpinski triangle graphs, whose resistance is determined in \cite{2Forests,jiang2018some}.  Some families of flower graphs also appear in the family of graphs whose resistance and Kirchhoff index are considered in \cite{Yin2018resistance}.  %The demonstrates the power of the generality of the $2$-separation from \cite{2Forests}.
    
    In work in \cite{lostinspace}, the resistance distance in random geometric graphs is analyzed, and it is shown that as the number of vertices in a random geometric graph grows to infinity, the resistance distance between two nodes approaches the sum of the reciprocals of their degrees.  The authors of \cite{lostinspace} thus argue that the resistance distance is not meaningful as a metric in random geometric graphs since the limiting resistance remains bounded and depends only on degrees, and not the structure of the network.  The results of the current paper are in sharp contrast to this paradigm, since the resistance becomes unbounded as the flower graph grows for any choice of base graph (see Theorem \ref{thm:mostgendiff} and Corollary \ref{cor:mostgenliminf}).  Indeed, our results add to a growing body of research exhibiting families of graphs with this property.  See \cite{barrett2019resistance}, for instance, for a discussion of this issue. Interestingly, many flower graphs (depending on the base graph chosen) can be viewed as ``geometric" graphs, in that they can be exhibited as points in the plane which are adjacent if they are within a certain distance of each other, but they are not \emph{random} geometric graphs as considered in \cite{lostinspace}.  It is of interest to determine generally when the resistance distance in a family of graphs will behave  more like random geometric graphs of \cite{lostinspace}, or more like graphs we are considering here. 
    
\section{Preliminaries}
    \subsection{N-separations of Graphs}
        Our methods for deriving explicit formulas for resistance distance rely heavily on creating $n$-separations of graphs (defined below)
        with easy to compute effective resistances.
        
        \begin{definition}
            An \emph{$n$-separation} on a graph $G$ is a pair of subgraphs $G_1, G_2$ such that
            \begin{itemize}
                \item $V(G) = V(G_1) \cup V(G_2)$,
                \item $|V(G_1) \cap V(G_2)| = n$,
                \item $E(G) = E(G_1) \cup E(G_2)$, and
                \item $E(G_1) \cap E(G_2) = \emptyset$        
            \end{itemize}
            The set $V(G_1) \cap V(G_2) = \{v_1, \cdots, v_n\}$ is called an \emph{$n$-separator} of $G$.
        \end{definition}
        
        \begin{lemma}[Equation 4 of \cite{2Forests}]\label{lem:onesep}
            Given a graph $G$ with a 1-separator $u \in V(G)$, let $G_1$ and $G_2$ represent the two graphs created by the 1-separation. \\
            If $i \in V(G_1)$ and $j \in V(G_2)$,
            \begin{equation}
                r_G(i,j) = r_{G_1}(i, u) + r_{G_2}(j,u)\label{eq:onesep}
            \end{equation}
        \end{lemma}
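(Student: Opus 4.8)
The plan is to prove the identity using the electrical-network interpretation of effective resistance, exploiting the fact that a $1$-separator forces all current between the two sides of $G$ to pass through the single cut vertex $u$, so that $G$ behaves like a series composition of $G_1$ and $G_2$ at $u$. Recall that $r_G(i,j)$ equals the potential difference $\varphi(i)-\varphi(j)$, where $\varphi$ is the potential on $V(G)$ associated with a unit current injected at $i$ and extracted at $j$ (i.e.\ $\varphi$ is harmonic on $V(G)\setminus\{i,j\}$, with current defect $+1$ at $i$ and $-1$ at $j$); equivalently, $r_G(i,j)$ is the energy $\sum_e r_e\,\theta_e^2$ dissipated by the corresponding unit flow $\theta$. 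I would build the potential on $G$ by gluing together the potentials of the two pieces.

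Concretely, assume first $i\neq u$ and $j\neq u$ (and that $G_1,G_2$ are connected, so the relevant resistances are defined). Let $\varphi_1$ be the potential on $G_1$ for a unit current flow from $i$ to $u$, normalized so $\varphi_1(u)=0$, whence $\varphi_1(i)=r_{G_1}(i,u)$; let $\varphi_2$ be the potential on $G_2$ for a unit current flow from $u$ to $j$, normalized so $\varphi_2(j)=0$, whence $\varphi_2(u)=r_{G_2}(u,j)$. Define $\varphi$ on $V(G)=V(G_1)\cup V(G_2)$ by $\varphi=\varphi_1+r_{G_2}(u,j)$ on $V(G_1)$ and $\varphi=\varphi_2$ on $V(G_2)$; the two definitions agree at the unique common vertex $u$. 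Because $E(G)$ is the disjoint union $E(G_1)\cup E(G_2)$ and every vertex other than $u$ lies in exactly one side, $\varphi$ inherits harmonicity at every vertex of $V(G)\setminus\{i,j,u\}$, and it has the correct defects $+1$ at $i$ and $-1$ at $j$. At $u$, the $G_1$-side contributes current defect $-1$ (current flows into $u$) while the $G_2$-side contributes $+1$ (current flows out of $u$), and these cancel, so $\varphi$ is harmonic at $u$ as well. Hence $\varphi$ is the potential for the unit flow from $i$ to $j$ in $G$, and
\[
r_G(i,j)=\varphi(i)-\varphi(j)=\bigl(\varphi_1(i)+r_{G_2}(u,j)\bigr)-0=r_{G_1}(i,u)+r_{G_2}(u,j),
\]
which is Equation \eqref{eq:onesep} since resistance is symmetric. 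The degenerate cases $i=u$ or $j=u$ are immediate: $r_{G_1}(u,u)=0$ and no current enters the corresponding side.

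The point that needs the most care is the bookkeeping at the cut vertex: one must verify that the current-conservation defects of the two glued pieces at $u$ are equal and opposite, and that $u$ really is the only vertex shared by $G_1$ and $G_2$ so that no edge straddles the separation — both of which follow directly from the definition of a $1$-separation. An alternative route is purely linear-algebraic, expressing $r_G(i,j)$ through bordered-Laplacian determinants or a block decomposition of $\lapl^\dagger$ adapted to the separation, but the flow/harmonic-potential argument is cleaner and will generalize naturally to the $n$-separation formulas used later; in any case, since this statement is exactly Equation (4) of \cite{2Forests}, one may also simply cite it.
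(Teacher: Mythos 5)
Your argument is correct. The paper itself offers no proof of this lemma --- it is stated as a direct citation of Equation 4 of \cite{2Forests} --- so your harmonic-potential gluing argument is a genuinely self-contained alternative to simply invoking the reference. The construction is sound: translating $\varphi_1$ by the constant $r_{G_2}(u,j)$ changes neither harmonicity nor edge currents, the two pieces agree at the unique shared vertex $u$, and the current defects $-1$ (from the $i\to u$ flow in $G_1$) and $+1$ (from the $u\to j$ flow in $G_2$) cancel there, so the glued $\varphi$ solves $\lapl\varphi = e_i - e_j$ on $G$ and \eqref{eq:onesep} follows from $r_G(i,j)=\varphi(i)-\varphi(j)$. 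The only step you lean on implicitly is the equivalence between the pseudoinverse definition $r_G(i,j)=(e_i-e_j)^T\lapl^\dagger(e_i-e_j)$ used in this paper and the potential/flow characterization; that equivalence is standard (any solution of $\lapl\varphi=e_i-e_j$ differs from $\lapl^\dagger(e_i-e_j)$ by a constant on a connected graph, which does not affect $\varphi(i)-\varphi(j)$), but since the paper defines resistance via $\lapl^\dagger$, a one-line remark to that effect would make the proof fully match the paper's conventions. What your route buys is independence from the external reference and a template that, as you note, extends naturally to the $2$-separation formula of Lemma \ref{lem:twosep}; what the citation buys is brevity.
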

    
        \begin{lemma}[Theorem 18 of \cite{2Forests}]\label{lem:twosep}
            Let $G$ be a graph with a 2-separation, with $i,j$ the two vertices separating the graph, and $G_1, G_2$ the two graphs created by the separation. \\
            If $u,v$ are in the vertex set of $G_1$, then
            \begin{equation}
                r_G(u,v) = r_{G_1}(u,v) - 
                           \frac{[r_{G_1}(u,i) + r_{G_1}(v,j) - r_{G_1}(u,j) - r_{G_1}(v,i)]^2}{4[r_{G_1}(i,j) + r_{G_2}(i,j)]}\label{eq:twosep}
            \end{equation}
        \end{lemma}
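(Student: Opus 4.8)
The plan is to prove \eqref{eq:twosep} by a short argument in the electrical network on $G$ whose edges are unit resistors, combining two standard moves: replacing a two‑terminal subnetwork by a single equivalent resistor, and tracking how an effective resistance changes when one edge is added. Throughout I would write $R := r_{G_2}(i,j)$ and $D := r_{G_1}(u,i) + r_{G_1}(v,j) - r_{G_1}(u,j) - r_{G_1}(v,i)$, so that the claim reads $r_G(u,v) = r_{G_1}(u,v) - D^2/\bigl(4(r_{G_1}(i,j)+R)\bigr)$.

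\emph{Step 1: collapse $G_2$ to a single resistor.} I would send a unit current into $G$ at $u$ and out at $v$, and let $\phi$ be the resulting potential on $V(G)$ (unique up to an additive constant, with $\phi_u - \phi_v = r_G(u,v)$). Because $u,v \in V(G_1)$ and $\{i,j\}$ separates $G_1$ from $G_2$, there are no sources or sinks at interior vertices of $G_2$; hence current balance forces equal and opposite net currents, $\pm\gamma$ say, into $G_2$ at $i$ and $j$, and $\phi|_{V(G_2)}$ is exactly the potential realizing $\gamma$ units of current from $i$ to $j$ in $G_2$, so $\phi_i - \phi_j = \gamma R$. Thus $G_2$, viewed from its two boundary vertices, is indistinguishable from a single resistor of resistance $R$ between $i$ and $j$; by uniqueness of potentials, $r_G(u,v) = r_{\widehat G_1}(u,v)$, where $\widehat G_1$ is $G_1$ with one extra edge $e = ij$ of resistance $R$ (if $ij \in E(G_1)$ this merely adds a parallel branch, which is harmless).

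\emph{Step 2: the effect of adding the edge $e$.} To finish I would send a unit current from $u$ to $v$ in $\widehat G_1$, let $\beta$ be the current it drives through $e$ from $i$ to $j$, and note that, since $G_1$ lacks the edge $e$, superposition in $G_1$ expresses the $\widehat G_1$‑potential as (the $G_1$‑potential for unit current $u\to v$) plus $\beta$ times (the $G_1$‑potential for unit current $j\to i$). All the cross‑terms are instances of the transfer‑resistance identity: in any connected network $H$, a unit current from $a$ to $b$ yields potential difference $\phi_c - \phi_d = \tfrac12\bigl(r_H(a,d) + r_H(b,c) - r_H(a,c) - r_H(b,d)\bigr)$, which one checks by expanding $(e_c-e_d)^T\lapl_H^{\dagger}(e_a-e_b)$ and using $r_H(x,y) = (\lapl_H^{\dagger})_{xx} + (\lapl_H^{\dagger})_{yy} - 2(\lapl_H^{\dagger})_{xy}$. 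Applying this in $G_1$, the auxiliary $j\to i$ current contributes $\tfrac12 D$ to $\phi_u - \phi_v$ and $-\beta\, r_{G_1}(i,j)$ to $\phi_i-\phi_j$, while the original $u\to v$ current contributes $-\tfrac12 D$ to $\phi_i-\phi_j$. Ohm's law on $e$, namely $\phi_i-\phi_j = \beta R$, then gives $\beta = -D/\bigl(2(r_{G_1}(i,j)+R)\bigr)$, whence $r_{\widehat G_1}(u,v) = \phi_u - \phi_v = r_{G_1}(u,v) + \tfrac12\beta D = r_{G_1}(u,v) - D^2/\bigl(4(r_{G_1}(i,j)+R)\bigr)$, which is \eqref{eq:twosep}.

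The mathematical content here is light; I expect the real work to be bookkeeping — pinning down orientations and signs in the transfer‑resistance identity and in the superposition decomposition (it is very easy to pick up a stray sign in $D$ or in $\beta$), and making the Step 1 collapse rigorous rather than heuristic. One clean way to handle both is to run the whole argument algebraically, realizing the passage from $G_1$ to $\widehat G_1$ as a rank‑one (Sherman--Morrison) update of $\lapl_{G_1}^{\dagger}$, with the care then going into the one‑dimensional kernel of the Laplacian; another is to isolate Step 2 as its own lemma ("resistance under adding a single edge") and then obtain \eqref{eq:twosep} by feeding in Step 1. Either route mirrors, and slightly streamlines, the derivation of Theorem 18 in \cite{2Forests}.
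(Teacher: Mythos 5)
Your argument is correct, and the sign-sensitive steps check out: with $D = r_{G_1}(u,i)+r_{G_1}(v,j)-r_{G_1}(u,j)-r_{G_1}(v,i)$ and $R=r_{G_2}(i,j)$, the transfer-resistance identity gives $(e_u-e_v)^T L_{G_1}^\dagger(e_j-e_i)=\tfrac12 D$ and $(e_i-e_j)^T L_{G_1}^\dagger(e_u-e_v)=-\tfrac12 D$, so Ohm's law on the added edge yields $\beta=-D/\bigl(2(r_{G_1}(i,j)+R)\bigr)$ and hence the stated formula. Note, however, that the paper itself gives no proof of this lemma: it is imported verbatim as Theorem 18 of \cite{2Forests}, so the only comparison available is with that source, whose derivation is combinatorial (resistances expressed as ratios of spanning-2-forest and spanning-tree counts, decomposed across the separator). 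Your route is genuinely different and arguably more elementary: Step 1 replaces $G_2$ by a single equivalent resistor $R$ between $i$ and $j$, and Step 2 is in effect the rank-one update $L_{\widehat G_1}=L_{G_1}+\tfrac1R(e_i-e_j)(e_i-e_j)^T$ solved by superposition. This is self-contained, extends verbatim to weighted graphs, and makes the denominator $r_{G_1}(i,j)+r_{G_2}(i,j)$ transparent as a series resistance; what it gives up is the combinatorial interpretation of numerator and denominator that the 2-forest proof supplies. Two points to tighten in writing it up: the clause ``since $G_1$ lacks the edge $e$'' is unnecessary and slightly misleading, as the superposition decomposition works even when $ij\in E(G_1)$ provided $e$ is treated as a distinct parallel edge (equivalently, use the Laplacian update above, which is indifferent to multiplicity); and Step 1 should be phrased as the verification that the restriction of the $G$-potential to $V(G_1)$ satisfies the current-balance equations of $\widehat G_1$ at every vertex, including at $i$ and $j$ and in the degenerate cases $u\in\{i,j\}$ or $v\in\{i,j\}$, since that is the one place the separator hypothesis is actually used.
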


    \subsection{Kirchhoff Index and Kemeny's Constant}
        \begin{definition}\label{def:kirchhoff}
            Given a graph $G$, the \emph{Kirchhoff index} $\kirchhoff(G)$ is given by the summation
            \begin{align*}
                \kirchhoff(G) = \frac{1}{2}\sum_{i,j \in G} r_G(i,j).
            \end{align*}
        \end{definition}
        
        Kemeny's constant is a quantity arising in the study of Markov chains, which is described in more detail in \cite{kirkland2016kemeny} (for example). For a random walk on a graph, Kemeny's constant gives a measure of the average length of a random walk between two vertices of the graph. We will not need the full definition of Kemeny's constant here, but we will use the following result from \cite{kirkland2016kemeny} to calculate Kemeny's constant in terms of resistance.
        \begin{theorem}[Corollary 2.4 of \cite{kirkland2016kemeny}]\label{thm:kemeny}
            Let $R$ be the resistance matrix of a connected graph $G$ with $n$ vertices (i.e., the matrix whose $(i,j)$ entry is $r_G(i,j)$), $q$ be the number of edges in $G$, and $d$ be the vector of degrees $d_1, d_2, ..., d_n$. Kemeny's constant is given by
            \begin{equation*}
                \kemeny(G) = \frac{1}{4q}d^TRd = \frac{1}{4q}\sum_{i, j \in G} d_i d_j r_G(i, j).
            \end{equation*}
        \end{theorem}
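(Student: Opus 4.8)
The plan is to derive Theorem~\ref{thm:kemeny} from two classical facts about the simple random walk on a graph. Write $P$ for the transition matrix of that walk, $\pi$ for its stationary distribution, and $m_{ij}$ for the mean first passage time from vertex $i$ to vertex $j$ (with the convention $m_{ii}=0$). Recall that $\pi_i = d_i/(2q)$, since the degrees sum to $2q$, and that Kemeny's constant is \emph{defined} by $\kemeny(G)=\sum_{j}\pi_j m_{ij}$, a quantity whose independence of the starting vertex $i$ is exactly the classical theorem of Kemeny. The proof then amounts to rewriting this average in terms of resistance.

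First I would invoke the commute-time identity: the commute time $C_{ij}:=m_{ij}+m_{ji}$ between two vertices equals $2q\,r_G(i,j)$. This is the classical result of Chandra, Raghavan, Ruzzo, Smolensky, and Tiwari; if a self-contained proof is wanted, one writes $m_{ij}$ using the group inverse of $I-P$ (equivalently a suitably normalized pseudoinverse of $\lapl$), observes that the $i$- and $j$-dependence of $m_{ij}+m_{ji}$ collapses into the bilinear form $(e_i-e_j)^T$ applied to that inverse, and identifies the result as $2q\,(e_i-e_j)^T\lapl^\dagger(e_i-e_j)$.

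The one genuinely useful manipulation is a symmetrization. Since $\sum_i \pi_i = 1$, averaging the defining relation over $i$ against $\pi$ gives $\kemeny(G)=\sum_{i}\pi_i\kemeny(G)=\sum_{i,j}\pi_i\pi_j m_{ij}$; relabeling $i\leftrightarrow j$ in the same sum and adding yields
\begin{equation*}
    2\kemeny(G)=\sum_{i,j}\pi_i\pi_j\bigl(m_{ij}+m_{ji}\bigr)=\sum_{i,j}\pi_i\pi_j\,C_{ij}.
\end{equation*}
Now substitute $C_{ij}=2q\,r_G(i,j)$ and $\pi_i=d_i/(2q)$:
\begin{equation*}
    2\kemeny(G)=\sum_{i,j}\frac{d_i}{2q}\cdot\frac{d_j}{2q}\cdot 2q\,r_G(i,j)=\frac{1}{2q}\sum_{i,j}d_i d_j\,r_G(i,j),
\end{equation*}
and dividing by $2$ gives $\kemeny(G)=\frac{1}{4q}\sum_{i,j}d_i d_j\,r_G(i,j)=\frac{1}{4q}d^T R d$, which is the claim. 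Note the diagonal terms cause no trouble, since $m_{ii}=0$ matches $r_G(i,i)=0$.

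The only real obstacle is the commute-time identity $C_{ij}=2q\,r_G(i,j)$; granting it, the rest is bookkeeping. Since the statement is quoted here as Corollary~2.4 of \cite{kirkland2016kemeny}, one may of course simply cite it — the sketch above is essentially the argument behind that corollary, reorganized around the symmetrization step.
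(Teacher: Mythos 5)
The paper does not prove this statement; it is quoted directly as Corollary 2.4 of the cited reference, so there is no internal proof to compare against. Your derivation is correct and is essentially the standard argument underlying that corollary: the symmetrization $2\kemeny(G)=\sum_{i,j}\pi_i\pi_j(m_{ij}+m_{ji})$ combined with the commute-time identity $m_{ij}+m_{ji}=2q\,r_G(i,j)$ and $\pi_i=d_i/(2q)$ gives exactly the stated formula (and your convention $m_{ii}=0$ matches the one needed for the $i$-independence of $\sum_j\pi_j m_{ij}$), so the proposal is a valid self-contained justification of the cited result.
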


\section{Generalized Flower Graphs}
    We begin with the most general result, which is the main result of this paper. First, we define the class of graphs that we are working with and then proceed to give explicit formulas for resistance distance
    in terms of the effective resistance in smaller subgraphs.
    
    \begin{definition}\label{def:flower}
        Let $G$ be a graph, $x, y$ be two distinct vertices of $G$, and $n \geq 3$. A \emph{generalized flower} of $G$, written $F_n(G, x, y)$, is the graph obtained by taking $n$ vertex disjoint copies of the base graph $G_1, G_2, \cdots G_n$, and associating $x_{i-1}$, the marked vertex $x$ in $G_{i-1}$, with $y_{i}$ for $1 < i < n$ and $x_1$ with $y_n$. We refer to $G_i$ as the $i$-th \emph{petal} of the flower graph, and the set $I = \{x_1, \cdots, x_n\}$ as the \emph{associated vertices} of the flower.
    \end{definition}
    
    We suppress the marked vertices $x, y$ from our notation when their choice is clear from context or the specification is unnecessary.

    The following theorem is our main result, which expresses the resistance in any flower graph $F_n(G)$ in terms of resistances in the base graph $G$.
    
    %Currently expressing 'd' as number of 'petals' but we have talked about changing it to a distance or something else...Keep thinking
    %Reword probably
    \begin{theorem}\label{thm:mostgen}
    Given a generalized flower graph $F_n(G)=F_n(G,x,y)$ with vertices $u, v$ in different copies of $G$, label the copies such that $u \in V(G_1)$. Let $d$ be the number of copies of $G$ between $u,v$ inclusive, that is, $v \in V(G_d)$. Let $x,y$ be the vertices of $G$ connecting each $G_i$ with $G_{i+1}$ and $G_{i-1}$. Then we have
    
    \begin{align*}
        r_{F_n(G)}(u,v) =&\: r_G(u,y) + r_G(v,x) +(d-2)r_G(x,y) \\
                         &- \frac{[r_G(u,x)+r_G(v,y)-r_G(u,y)-r_G(v,x)-2(d-1)r_G(x,y)]^2}{4nr_G(x,y)}.
    \end{align*}
    If $u,v$ are both in the same copy of $G$,
    \begin{align*}
        r_{F_n(G)}(u,v) = r_G(u,v) - \frac{[r_G(u,x)+r_G(v,y)-r_G(u,y)-r_G(v,x)]^2}{4nr_G(x,y)}.
    \end{align*}

    \end{theorem}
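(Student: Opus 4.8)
The plan is to obtain both formulas from a single application of the $2$-separation identity~\eqref{eq:twosep}, where every base-graph resistance that enters it is in turn evaluated by repeated use of the $1$-separation identity~\eqref{eq:onesep}.

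\textbf{Setting up the separation.} Each petal $G_i$ meets the rest of $F_n(G)$ only in its two associated vertices, so any two associated vertices form a $2$-separator. In the different-copies case I would choose the two associated vertices $s,t$ that bound the arc of $d$ consecutive petals running from $u$'s petal to $v$'s petal. Taking $H_1$ to be the union of those $d$ petals and $H_2$ the union of the remaining $n-d$ petals, the pair $(H_1,H_2)$ is a $2$-separation with separator $\{s,t\}$, and — the point of the choice — \emph{both} $u$ and $v$ lie in $H_1$, so Lemma~\ref{lem:twosep} applies with $H_1$ in the role of $G_1$, $H_2$ in the role of $G_2$, and $\{s,t\}$ in the role of $\{i,j\}$. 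In the same-copy case I would instead take $H_1$ to be the single petal containing $u$ and $v$, $H_2$ the union of the other $n-1$ petals, and $\{s,t\}$ the two associated vertices of that petal.

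\textbf{Evaluating the ingredients via $1$-separations.} Each of $H_1,H_2$ is a ``chain'' of petals glued one at a time at single associated vertices, so Lemma~\ref{lem:onesep}, applied repeatedly, gives: (i) the resistance between the two ends of a chain of $k$ petals equals $k\,r_G(x,y)$, since crossing one petal from one associated vertex to the other costs exactly $r_G(x,y)$; in particular $r_{H_1}(s,t)+r_{H_2}(s,t)=d\,r_G(x,y)+(n-d)\,r_G(x,y)=n\,r_G(x,y)$, which is precisely the denominator $4n\,r_G(x,y)$ up to the factor $4$; (ii) each of $r_{H_1}(u,s)$, $r_{H_1}(u,t)$, $r_{H_1}(v,s)$, $r_{H_1}(v,t)$ splits as a base-graph resistance from $u$ (resp.\ $v$) to an associated vertex of its own petal, plus an integer multiple of $r_G(x,y)$ counting the intervening petals — here one uses the elementary fact that a petal hanging off a single cut vertex contributes nothing to resistances between vertices on the other side; and (iii) likewise $r_{H_1}(u,v)=r_G(u,\cdot)+(d-2)\,r_G(x,y)+r_G(v,\cdot)$, the two dots being the appropriate marked vertices. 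In the same-copy case (ii)--(iii) degenerate: $r_{H_1}(u,v)=r_G(u,v)$ and the four cross terms become $r_G(u,x),r_G(u,y),r_G(v,x),r_G(v,y)$ outright.

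\textbf{Assembling the formula and the main obstacle.} Substituting (i)--(iii) into~\eqref{eq:twosep} and expanding the squared bracket produces the two displayed identities. I expect the genuinely delicate part to be \emph{not} any individual resistance computation — those are forced by Lemmas~\ref{lem:onesep} and~\ref{lem:twosep} — but the bookkeeping: fixing the cyclic orientation of the petals, deciding which of $s,t$ plays the role of $i$ and which of $j$ in~\eqref{eq:twosep}, and matching each of $u,v$ to the marked vertices $x,y$ so that the coefficients $d-1$ and $d-2$ (and the placement of $r_G(u,x)$ versus $r_G(u,y)$, etc.) come out consistently with the convention for $d$; this, together with the mechanical but lengthy algebraic simplification of the resulting square, is where essentially all the work lies.
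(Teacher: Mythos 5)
Your proposal is correct and follows essentially the same route as the paper: a $2$-separation at the two associated vertices bounding the chain of $d$ petals containing $u$ and $v$ (respectively the single shared petal), with all constituent resistances in $H_1$ and $H_2$ computed by iterated use of the $1$-separation identity, then substituted into Lemma~\ref{lem:twosep}. The bookkeeping you flag as the main work is exactly what the paper carries out, yielding the denominator $4n\,r_G(x,y)$ and the bracket $r_G(u,x)+r_G(v,y)-r_G(u,y)-r_G(v,x)-2(d-1)r_G(x,y)$.
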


    \begin{proof}
    We first prove the formula when $u,v$ are in different copies of $G$. Label such that $u\in V(G_1)$ and $v\in V(G_d)$. If we let $\{x_1, y_d\}$ be a 2-separator on $F_n(G)$, we have a 2-separation such that $u$ and $v$ are in the same component. We sometimes refer to $\{x_1, y_d\}$ as $\{i,j\}$ as in Lemma \ref{lem:twosep}.
    %We make use of Lemma \ref{lem:twosep} to obtain our result. 
    %Choose $x \in V(G_1)$ and $y \in V(G_d)$ to be the two separators and 
    Let $H_1$ be the graph of the separation containing $u, v$ and $H_2$ be the rest of the flower graph (see Figure \ref{fig:2sepbreakdown1}). Then by Lemma \ref{lem:onesep}
    \begin{align*}
        r_{H_1}(u,v) = r_G(u,y) + (d-2)r_G(x,y) + r_G(x,v).
    \end{align*}{}
    Due to our labeling we also have
    \begin{align*}
        r_{H_1}(u,i) &= r_G(u,x) \text{ and} \\
        r_{H_1}(v,j) &= r_G(v,y)
    \end{align*}{}
    Once again using Lemma \ref{lem:onesep} we get
    \begin{align*}
        r_{H_1}(u,j) &= r_G(u,y) + (d-1)r_G(x,y) \\
        r_{H_1}(v,i) &= r_G(v,x) + (d-1)r_G(x,y) \\
        r_{H_1}(i,j) &= d\cdot r_G(x,y) \text{ and} \\
        r_{H_2}(i,j) &= (n-d)r_G(x,y)
    \end{align*}
    
    \begin{figure}[H]\label{fig:2sepbreakdown1}
        \centering
        \begin{center}
        \begin{tikzpicture}
        \tikzstyle{every node}=[circle, fill=black, inner sep=1pt]
        \draw{ \foreach \x in {0,60,120,180,240,300} {
        (\x:1)node{}
        }
        };
        \draw{(0:1)node[label=$j$,label distance=3pt]{}};
        \draw{(180:1)node[label=$i$,label distance=3pt]{}};
        \foreach \x in {30,90,150,210,270,330}{
        \draw[rotate around={\x: (\x:1)},](\x:1) ellipse(10 pt and 15 pt);
        }
        \tikzstyle{every node}=[circle, draw=none, fill=white, minimum width = 8 pt, inner sep=1pt]
        \draw \foreach \x in {30,90,150,210,270,330}{
        (\x:1)node[]{$G$}
        };
        \tikzstyle{every node}=[circle, fill=black, inner sep=1pt]
        \draw{(150:1.35)node[label=$u$]{}};
        \draw{(30:1.35)node[label=$v$]{}
        };
        \end{tikzpicture}{}
        \end{center}
        \caption{$F_6(G)$ with the $i,j$ 2-separation and nodes $u,v$ labeled}
        \label{fig:flowergraphGv2}
    \end{figure}
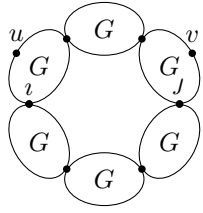
    
    \begin{figure}[H]
        \centering
        \begin{tikzpicture}
        \tikzstyle{every node}=[circle, fill=black, inner sep=1pt]
        \foreach \x in {1,2,3,4}{\draw { 
        (\x,1)node{}
        };
        }
        \foreach \x in{1,2,3}{
        \draw[] (\x+0.5,1)ellipse(14pt and 10pt);
        }
        \foreach \x in {5,6,7}{
        \draw{(\x,1)node{}};
        \draw[] (\x+0.5,1)ellipse(14pt and 10pt);
        }
        \draw{(8,1)node{}};
        
        \tikzstyle{every node}=[circle, draw=none, fill=white, minimum width = 6 pt, inner sep=1pt]
        \draw \foreach \x in {1,2,3,5,6,7}{
        (\x+0.5,1)node[]{$G$}
        };
        \tikzstyle{every node}=[circle, fill=black, inner sep=1pt]
        \draw{(1,1)node[label={$i$}]{}};
        \draw{(4,1)node[label=$j$]{}};
        \draw{(5,1)node[label=$j$]{}};
        \draw{(8,1)node[label={$i$}]{}};
        \draw{(1.5,1.35)node[label=$u$]{}};
        \draw{(3.5,1.35)node[label=$v$]{}};
        \end{tikzpicture}
        \caption{$F_6(G)$ after applying the 2-separation}
        \label{fig:2sepbreakdown2}
    \end{figure}
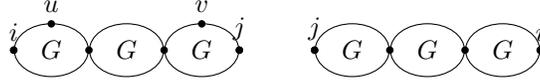{}    
    Plugging these values into Lemma \ref{lem:twosep} we get
    \begin{align*}
        r_{F_n(G)}(u,v) =& \: r_G(u,y) + r_G(v,x) + (d-2)r_G(x,y)\\&-\frac{[r_G(u,x)+r_G(v,y)-r_G(u,y)-(d-1)r_G(x,y)-r_G(v,x)-(d-1)r_G(x,y)]^2}{4[dr_G(x,y)+(n-d)r_G(x,y)]}\\
        &=r_G(u,y) + r_G(v,x) + (d-2)r_G(x,y)\\
        & -\frac{[r_G(u,x)+r_G(v,y)-r_G(u,y)-r_G(v,x)-2(d-1)r_G(x,y)]^2}{4nr_G(x,y)}
    \end{align*}{}
    Thus we have arrived at our desired result.
    
    Now we look at when $u,v$ are in the same copy of $G$. This is really just a special case of Lemma \ref{lem:twosep}. Note that as above we get
    \begin{align*}
        r_{H_1}(i,j) + r_{H_2}(i,j) = nr_G(x,y).
    \end{align*}
    \hidden{
    Now we look at when $u,v$ are in the same copy of $G$. This is really just a special case of Lemma \ref{lem:twosep}. The vertices $i,j$ from Lemma \ref{lem:twosep} become vertices $x,y$, and,  similar to the above proof we get
    \begin{align*}
        r_{H_1}(x,y) + r_{H_2}(x,y) = nr_G(x,y).
    \end{align*} 
    }
    \end{proof}
    
    %\begin{theorem}\label{thm:mostgensamepet}
    %Given a generalized flower graph $F_n(G)$ with vertices $u,v \in V(G_i)$ and $x, y$ chosen such that $x,y$ is a two separation separating $G_i$ from the rest of $F_n(G)$ we have
    %$x \in V(G_i)$ is also $y \in V(G_{i-1})$ and $y \in V(G_i)$ is also $x \in V(G_{i+1})$
    
    %\begin{equation}
    %    r_{F_n(G)}(u,v) = r_G(u,v) - \frac{[r_G(u,x)+r_G(v,y)-r_G(u,y)-r_G(v,x)]^2}{4nr_G(x,y)}
    %\end{equation}{}
    
    %\end{theorem}
    %Since this is so similar to that lemma I don't really know what to say about it in terms of proof...
    %\begin{proof}
    %This is really just a special case of Lemma \ref{lem:twosep}. The vertices $i,j$ from Lemma \ref{lem:twosep} become vertices $x,y$, and,  similar to the above proof we get
    %\begin{align*}
    %    r_{H_1}(x,y) + r_{H_2}(x,y) = nr_G(x,y).
    %\end{align*}
    %\end{proof}
    The next theorem will address where one might find the maximum effective resistance in a flower graph. This class of graphs contains many symmetries, which causes the maximum effective resistance to occur at several points. Bapat shows that resistance distance satisfies the properties of a metric on a graph.  In particular, it satisfies the triangle inequality, so resistance distance also satisfies the following reverse triangle inequality (see Chapter 10 of \cite{GraphsAndMatrices}).
    
    \begin{lemma}\label{lem:ReverseTriangleIneq}
        Let $G$ be any graph, and let $x, y, z$ be any vertices of $G$. Then
        \begin{align*}
            \lvert r_G(x,y) - r_G(y,z) \rvert \leq  r_G(x,z)
        \end{align*}
    \end{lemma}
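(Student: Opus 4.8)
The statement is the standard reverse triangle inequality, and the plan is to derive it purely formally from the ordinary triangle inequality, which we are entitled to assume since resistance distance is a metric (Chapter 10 of \cite{GraphsAndMatrices}). So the only substantive input — the triangle inequality $r_G(a,c) \le r_G(a,b) + r_G(b,c)$ for all vertices $a,b,c$ — is already on the table, and nothing about the Laplacian, its pseudoinverse, or the flower-graph structure is needed.

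First I would apply the triangle inequality to the triple $(x,z,y)$ in the form $r_G(x,y) \le r_G(x,z) + r_G(z,y)$, and rearrange (using symmetry $r_G(z,y)=r_G(y,z)$) to obtain
\begin{equation*}
r_G(x,y) - r_G(y,z) \le r_G(x,z).
\end{equation*}
Then I would apply the triangle inequality again, this time to the triple $(z,x,y)$ in the form $r_G(z,y) \le r_G(z,x) + r_G(x,y)$, and rearrange to obtain
\begin{equation*}
r_G(y,z) - r_G(x,y) \le r_G(x,z).
\end{equation*}
These two inequalities together say that both $r_G(x,y) - r_G(y,z)$ and its negative are bounded above by $r_G(x,z)$, which is exactly $\lvert r_G(x,y) - r_G(y,z)\rvert \le r_G(x,z)$.

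There is no real obstacle here; the lemma is a one-line consequence of the metric axioms, and I would expect the author's proof to be essentially this. If one wanted a self-contained argument that did not invoke the metric property as a black box, the alternative would be to prove the triangle inequality directly — for instance via the series/parallel or monotonicity interpretation of effective resistance, or via the Rayleigh-type characterization of $L^\dagger$ — but since the triangle inequality is cited just above the statement, the short derivation above is the natural route and I would not take that detour.
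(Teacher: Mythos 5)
Your proof is correct and matches the paper's approach: the paper simply cites the fact (from Chapter 10 of \cite{GraphsAndMatrices}) that resistance distance is a metric and notes that the reverse triangle inequality follows from the triangle inequality, which is exactly the two-line rearrangement you carry out explicitly.
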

    
    %I forgot, this only shows us where a max is if  u,v are in different copies of G. Either add a comparison to when they're in the same copy or a rationale for why we don't need to worry about when they're in the same copy. (For graphs of decent size it seems obvious but when n is small there are things to consider)
    %Also, this is only true if x != y, that's noted in the proof but should we say so in the theorem statement or even talk about the x = y case in flower definition?
    
    %For odd numbers n, max is at d = (n+1)/2 (except maybe for n=3) NOT TRUE!! PREVIOUSLY MADE MISTAKE
    %For even you still have options (pg. 14 Adam Blue Notebook)
    \begin{theorem}\label{thm:maxd}
    Let $F_n(G)$ be as defined above. The effective resistance between two vertices $u, v \in F_n(G)$ will be greatest when $u \in G_1$ and $v \in G_d$ where $d$ is between $d = \frac{n}{2}$  and $d = \frac{n}{2}+2$. If $n$ is odd, then the maximum will always occur at $d = \frac{n+1}{2}$.
    \end{theorem}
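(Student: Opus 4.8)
The plan is to read everything off the explicit formula of Theorem~\ref{thm:mostgen} and treat it as a function of the separation index $d$. Abbreviate $\rho = r_G(x,y)$ and, for $u\in G_1$ and $v\in G_d$, set
\[
m \;=\; r_G(u,x) + r_G(v,y) - r_G(u,y) - r_G(v,x),
\]
so that Theorem~\ref{thm:mostgen} reads $r_{F_n(G)}(u,v) = r_G(u,y) + r_G(v,x) + (d-2)\rho - \dfrac{(m-2(d-1)\rho)^2}{4n\rho}$. First I would observe that the maximum cannot occur with $u,v$ in the same petal: the same-copy formula gives at most $r_G(u,v)$, a quantity independent of $n$, whereas taking $u,v$ in nearly antipodal petals already makes the different-copy formula grow with $n$, so the maximum is attained with $u$ and $v$ in distinct copies.

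The core of the argument is then to fix the positions of $u$ and $v$ inside their petals — equivalently, to fix the four base resistances $r_G(u,x), r_G(u,y), r_G(v,x), r_G(v,y)$, and hence $m$ — and let only $d$ vary, viewing the right-hand side as a function $f(d)$ of a single real variable. Its coefficient of $d^2$ is $-\rho/n < 0$, so $f$ is concave, and a one-line differentiation puts its vertex at
\[
d^{\ast} \;=\; \frac{n}{2} + 1 + \frac{m}{2\rho}.
\]
By the reverse triangle inequality (Lemma~\ref{lem:ReverseTriangleIneq}) we have $|r_G(u,x)-r_G(u,y)|\le\rho$ and $|r_G(v,x)-r_G(v,y)|\le\rho$; since $m = \bigl(r_G(u,x)-r_G(u,y)\bigr) - \bigl(r_G(v,x)-r_G(v,y)\bigr)$, this gives $|m|\le 2\rho$, hence $\frac{n}{2} \le d^{\ast} \le \frac{n}{2}+2$. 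Concavity of $f$ then forces the integer value of $d$ that maximizes it to be a nearest integer to $d^{\ast}$, and since $d^{\ast}$ lies in $[n/2,\, n/2+2]$ this nearest integer lies there too (in the borderline case where $d^{\ast}$ is exactly an endpoint, so that two integers tie, the optimal value is still attained at the one inside the interval). Maximizing over the finitely many choices of positions for $u$ and $v$ then yields the general claim.

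For the refined statement when $n$ is odd, note that $n/2$, $n/2+1$, $n/2+2$ are all half-integers, so the only integers in $[n/2,\, n/2+2]$ are $(n+1)/2$ and $(n+3)/2$. But these two indices record the \emph{same} unordered pair of vertices, counted around the cyclic arrangement of petals in the two opposite directions — indeed $(n+3)/2 = n+2-(n+1)/2$ — and reversing the direction leaves $r_{F_n(G)}(u,v)$ unchanged. Hence the maximum is in every case realized by a pair with $d = (n+1)/2$. I expect the only genuinely delicate point to be this transition from the continuous optimum $d^{\ast}$ to the integer one, in particular the equality case of the reverse triangle inequality where $d^\ast$ can land on an endpoint and the rounding is ambiguous; the observation that the optimal value is still attained at an integer inside $[n/2,\, n/2+2]$, together with the direction-reversal symmetry of flower graphs, resolves it, and everything else is routine manipulation of the formula from Theorem~\ref{thm:mostgen}.
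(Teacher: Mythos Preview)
Your proposal is correct and follows essentially the same route as the paper: view the formula from Theorem~\ref{thm:mostgen} as a concave quadratic in $d$, locate its vertex at $d^\ast = \tfrac{n}{2}+1+\tfrac{m}{2\rho}$, and use the reverse triangle inequality (Lemma~\ref{lem:ReverseTriangleIneq}) to confine $d^\ast$ to $[\tfrac{n}{2},\tfrac{n}{2}+2]$. Your treatment of the odd-$n$ case via the direction-reversal symmetry of the cyclic arrangement of petals is actually more explicit than the paper's own argument, which in its final form does not spell out the odd case at all; one small caution is that your claim that the maximum cannot occur with $u,v$ in the same petal is only justified asymptotically in $n$, though the paper's proof does not address this point either.
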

    \begin{proof}
    Treating Theorem \ref{thm:mostgen} as a function of $d$, standard techniques show this function is increasing for $d \leq\frac{n+2}{2} + \frac{r_G(u,x)+r_G(v,y)-r_G(u,y)-r_G(v,x)}{2r_G(x,y)}$, decreasing for $d \geq \frac{n+2}{2} + \frac{r_G(u,x)+r_G(v,y)-r_G(u,y)-r_G(v,x)}{2r_G(x,y)}$. By Lemma \ref{lem:ReverseTriangleIneq}, the expression $\frac{r_G(u,x)+r_G(v,y)-r_G(u,y)-r_G(v,x)}{2r_G(x,y)}$ is between $-1$ and $1$, and thus this function will achieve its maximum at some $d$ such that $\frac{n}{2}\leq d \leq \frac{n}{2}+2$.  
    \hidden{%%%%%%%%%%%%%%%%%%%%%%%%%%%%%%%%%%%%%%%%%%%%%%%%%
    We begin by differentiating Theorem \ref{thm:mostgen} with respect to $d$
    \begin{equation*}
        \frac{\partial}{\partial d}[r_{F_n(G)}(u,v)] = r_G(x,y)+\frac{r_G(u,x)+r_G(v,y)-r_G(u,y)-r_G(v,x)-2(d-1)r_G(x,y)}{n}
    \end{equation*}{}
    Now we set this equal to $0$ and solve for $d$ to determine where the max should be.
    \begin{align*}
        r_G(x,y) +& \frac{r_G(u,x)+r_G(v,y)-r_G(u,y)-r_G(v,x)-2(d-1)r_G(x,y)}{n} = 0 \\
        r_G(x,y)=& -\frac{r_G(u,x)+r_G(v,y)-r_G(u,y)-r_G(v,x)-2(d-1)r_G(x,y)}{n} \\
        nr_G(x,y) =& -r_G(u,x)-r_G(v,y)+r_G(u,y)+r_G(v,x)+2(d-1)r_G(x,y) \\
        2(d-1)r_G(x,y) =& r_G(u,x)+r_G(v,y)-r_G(u,y)-r_G(v,x)+nr_G(x,y) \\
        d-1 =& \frac{r_G(u,x)+r_G(v,y)-r_G(u,y)-r_G(v,x)+nr_G(x,y)}{2r_G(x,y)} \\
        d =& \frac{r_G(u,x)+r_G(v,y)-r_G(u,y)-r_G(v,x)+nr_G(x,y)}{2r_G(x,y)} + 1 \\
        d =& \frac{n}{2} + 1 + \frac{r_G(u,x)+r_G(v,y)-r_G(u,y)-r_G(v,x)}{2r_G(x,y)}
    \end{align*}{}
    
    Recalling the Reverse Triangle Inequality (Lemma \ref{lem:ReverseTriangleIneq}) we see that  
    \begin{align*}
        -2r_G(x,y) \leq r_G(u,x)+r_G(v,y)-r_G(u,y)-r_G(v,x) \leq 2r_G(x,y).
    \end{align*}{}
    Thus we have our bounds on $d$. Next we will confirm that the resistance distance function has a max within these bounds on $d$.
    
    Let $d = \frac{n}{2} - 1$. Then we have
    \begin{align*}
        \frac{\partial}{\partial d}[r_{F_n(G)}(u,v)] =& \: r_G(x,y)+\frac{r_G(u,x)+r_G(v,y)-r_G(u,y)-r_G(v,x)-2(\frac{n}{2}-2)r_G(x,y)}{n}\\
        =&\: r_G(x,y) + \frac{r_G(u,x)+r_G(v,y)-r_G(u,y)-r_G(v,x)-nr_G(x,y)+4r_G(x,y)}{n}\\
         \geq&\: r_G(x,y)+\frac{-2r_G(x,y)-nr_G(x,y)+4r_G(x,y)}{n}\\
        =& \: \frac{2r_G(x,y)}{n} \geq 0
    \end{align*}{}
    Hence $r_{F_n(G)}(u,v)$ is guaranteed to be increasing before our interval for $d$ so long as $x \neq y$.
    
    Let $d = \frac{n}{2} + 3$. Then we have
    \begin{align*}
        \frac{\partial}{\partial d}[r_{F_n(G)}(u,v)] =&\: r_G(x,y)+\frac{r_G(u,x)+r_G(v,y)-r_G(u,y)-r_G(v,x)-2(\frac{n}{2}+2)r_G(x,y)}{n}\\
        =&\: r_G(x,y) + \frac{r_G(u,x)+r_G(v,y)-r_G(u,y)-r_G(v,x)-nr_G(x,y)-4r_G(x,y)}{n}\\
         \leq& \: r_G(x,y)+\frac{-2r_G(x,y)-nr_G(x,y)-4r_G(x,y)}{n}\\
        =&\: \frac{-2r_G(x,y)}{n} \leq 0
    \end{align*}{}
    Hence $r_{F_n(G)}(u,v)$ is guaranteed to be decreasing after our interval for $d$ so long as $x \neq y$. Thus we see that $r_G(u,v)$ will be at a max when $u \in G_1$ and $v \in G_d$ where $d$ is between $d = \frac{n}{2}$  and $d = \frac{n}{2}+2$.

    We will now compare the values of these formulas for a fixed odd number $n$ and values $d = \frac{n+1}{2}$ and $d = \frac{n+3}{2}$. 
    
    Let $b = r_G(u,x) + r_G(v,y) - r_G(u,y) - r_G(v,x)$.
    At $d=\frac{n+1}{2}$ we get
    \begin{align*}
        r_{F_n(G)}(u,v) =&\: r_G(u,y)+r_G(v,x)+\frac{n+1}{2}r_G(x,y)-2r_G(x,y) \\
                         &- \frac{[b^2 - 2b(n-1)r_G(x,y) + (n-1)^2r_G(x,y)^2]}{4nr_G(x,y)}
    \end{align*}{}

    At $d = \frac{n+3}{2}$ we get
    \begin{align*}
        r_{F_n(G)}(u,v) =&\: r_G(u,y)+r_G(v,x)+\frac{n+3}{2}r_G(x,y)-2r_G(x,y) \\
                         &- \frac{[b^2 - 2b(n+1)r_G(x,y) + (n+1)^2r_G(x,y)^2]}{4nr_G(x,y)}
    \end{align*}{}
    %Reword/rewrite
    Subtracting the second equation from the first we get $\frac{-b}{n}$ which could be either positive or negative. Thus the location of the maximum will depend on our choice of base graph, $G$. However, these options, $d = \frac{n+1}{2}$ and $d = \frac{n+3}{2}$ are almost symmetric in the sense that counting $\frac{n+1}{2}$ petals in one direction is the same as $\frac{n+3}{2}$ petals the other direction.
    
    Doing similar comparisons for a fixed even number $n$ and $d = \frac{n}{2}, \frac{n}{2}+1, \frac{n}{2}+2$ we find that it is ambiguous which of these three values of $d$ will give the maximum resistance distance. That is, once again the location of the maximum will depend on our choice of base graph $G$.
    }%%%%%%%%%%%%%%%%%%%%%%%%%%%%%%%%%%%%%%%%%%%%%%%%%%%%%%%%%
    \end{proof}
    One might have expected the maximum resistance in a flower graph to always occur between copies of $G$ that are as far apart as possible, or in other words at $d=\frac{n}{2}+1$, but this result suggests otherwise. Below is an example of a flower graph where the maximum resistance distance can occur at one of these less expected values of $d$.
    
    \begin{figure}[H]
    \centering
    \begin{tikzpicture}[scale=0.5]
    \tikzstyle{every node}=[circle, fill=black, inner sep=0.75pt]
    \draw{
    (180-26.57:2.24)node{}--(180-63.43:2.24)node{}
    (180-26.57:2.24)node{}--(180:1)node{}
    (90:1)node{}--(180-63.43:2.24)node{}
    (90:1)node[label=above:{$u$}]{}--(180:1)node{}
    (180-26.57:2.24)node{}--(180-45:1.414)node{}
    (180-45:1.414)node{}--(90:1)node{}
    (90:1)node{}--(45:1.414)node{}
    (45:1.414)node{}--(26.57:2.24)node{}
    (26.57:2.24)node{}--(18.43:3.16)node{}
    (18.43:3.16)node[label=above:{$v$}]{}--(14.04:4.12)node{}
    };
\end{tikzpicture}
    \qquad
    \begin{tikzpicture}[scale = 0.25]
    \tikzstyle{every node}=[circle, fill=black, inner sep=0.75pt]
    %Top one
    \draw{ (2,0)node[label=above:{$u$}]{}
    (6,-5)node[label=right:{$v$}]{}};
    \draw{\foreach \x in {0,1,2,3,4,5}{
    (\x,0)node{}--(\x+1,0)node{}
    }};
    \draw{\foreach \x in {0,2}{
    (\x,0)node{}--(1,0.5)node{}
    (\x,0)node{}--(1,-0.5)node{}
    }};
    %bottom one
    \draw{\foreach \x in {0,1,2,3,4,5}{
    (\x,-6)node{}--(\x+1,-6)node{}
    }
    (1,-6)node[label=below:{$w$}]{}};
    \draw{\foreach \x in {4,6}{
    (\x,-6)node{}--(5,-5.5)node{}
    (\x,-6)node{}--(5,-6.5)node{}
    }};
    %left one
    \draw{\foreach \y in {-5,-4,-3,-2,-1,0}{
    (0,\y)node{}--(0,\y-1)node{}
    }};
    \draw{\foreach \x in {-6,-4}{
    (0,\x)node{}--(0.5,-5)node{}
    (0,\x)node{}--(-0.5,-5)node{}
    }};
    %right one
    \draw{\foreach \y in {-5,-4,-3,-2,-1,0}{
    (6,\y)node{}--(6,\y-1)node{}
    }};
    \draw{\foreach \x in {0,-2}{
    (6,\x)node{}--(6.5,-1)node{}
    (6,\x)node{}--(5.5,-1)node{}
    }};
\end{tikzpicture}
    \qquad
    \begin{tikzpicture}[scale=0.2]
    \tikzstyle{every node}=[circle, fill=black, inner sep=0.75pt]
    \draw{
    %Top right
    (0,10)node{}--(1,9.27)node{}--(2,8.548)node{}--(3,7.822)node{}--(4,7.096)node{}--(5,6.37)node{}--(6,5.644)node{}
    (0,10)node{}--(1.5,9.96)node{}--(2,8.548)node{}--(0.5,8.585)node{}--(0,10)node{}
    %(0,10)node{}--(0.5,8.585)node{}--(2,8.548)node{}
    %Top Left 
    (0,10)node{}--(-1,9.27)node{}--(-2,8.548)node{}--(-3,7.822)node{}--(-4,7.096)node{}--(-5,6.37)node{}--(-6,5.644)node{}
    (-6,5.644)node{}--(-5.5,7.059)node{}--(-4,7.096)node{}
    (-6,5.644)node{}--(-4.5,5.681)node{}--(-4,7.096)node[label=above :{$u$}]{}
    %Bottom Right 
    (6,5.644)node{}--(5.618,4.665)node{}--(5.236,3.686)node{}--(4.854,2.707)node{}--(4.4715,1.727)node{}--(4.089,0.748)node[label= right:{$v$}]{}--(3.70725,-0.23)node{}
    (6,5.644)node{}--(6.2,4.437)node{}--(5.236,3.686)node{}        (6,5.644)node{}--(5,4.906)node{}--(5.236,3.686)node{}
    %Bottom Left
    (-6,5.644)node{}--(-5.618,4.665)node{}--(-5.236,3.686)node{}--(-4.854,2.707)node{}--(-4.4715,1.727)node{}--(-4.089,0.748)node{}--(-3.70725,-0.23)node{}
    (-3.70725,-0.23)node{}--(-4.8,0.47)node{}--(-4.4715,1.727)node{}        (-3.70725,-0.23)node{}--(-3.3,1.056)node{}--(-4.4715,1.727)node{}
    %Bottom
    (-3.70725,-0.23)node{}--(-2.4715,-0.23)node{}--(-1.236,-0.23)node{}--(0,-0.23)node{}--(1.235,-0.23)node{}--(2.4715,-0.23)node{}--(3.70725,-0.23)node{}
    (3.70725,-0.23)node{}--(2.4715,0.5)node{}--(1.235,-0.23)node{}
    (3.70725,-0.23)node{}--(2.4715,-1)node{}--(1.235,-0.23)node{}
    };
\end{tikzpicture}
    \caption{The base graph $G$ (left) $F_4(G)$ (center) $F_5(G)$ (right)}
    \label{fig:maxcounterexample}
\end{figure}
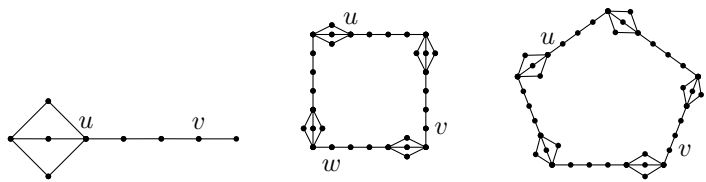{}
    In Figure \ref{fig:maxcounterexample} the maximum resistance distance between copies of vertices $u,v$ from $G$ will occur at points $u,v\in F_4(G)$ where $d = 2$ as opposed to $u,w \in F_4(G)$ where $d = 3$. For $F_5(G)$ the max for those specific vertices occurs where one would expect.
    %INTERESTING: For our example graph, if we considered our base graph to look slightly different (think diamonds tail on the left side instead) then this max could actually be at d = n/2+1
    %I mean, we can deconstruct the flower to a new base graph such that this maximum is occuring at d=n/2+1 instead of n/2
    
    \begin{theorem}\label{thm:mostgendiff}
        Let $F_n(G)$ and $F_{n+1}(G)$ be generalized flower graphs as defined above. Let $u, v$ be vertices with the largest effective resistance distance in the graph. That is, $u \in G_1$ and $v \in G_d$ where $ \frac{n}{2} \leq d \leq \frac{n}{2}+2$. Then
        \begin{align*}
            \lim_{n\to\infty}[ r_{F_{n+1}(G)}(u, v) - r_{F_n(G)}(u, v) ]= \frac{1}{4}r_G(x,y)
        \end{align*}
    \end{theorem}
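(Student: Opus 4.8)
The plan is to substitute the closed form of Theorem \ref{thm:mostgen} into the difference and read off its asymptotics. Write $\rho = r_G(x,y)$, and for a fixed pair $u,v$ of base-graph vertices set $A = r_G(u,y) + r_G(v,x)$ and $b = r_G(u,x) + r_G(v,y) - r_G(u,y) - r_G(v,x)$; these depend only on $G$ and on the chosen vertices, never on $n$. By Theorem \ref{thm:maxd}, when $u \in V(G_1)$ and $v \in V(G_d)$ realize the maximum resistance of $F_n(G)$ we may write $d = \tfrac n2 + c_n$ with $c_n \in [0,2]$ (and $c_n = \tfrac12$ when $n$ is odd), so $|c_n - 1| \le 1$.

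First I would carry out the substitution $d = \tfrac n2 + c_n$ in the inter-petal formula. Its bracketed quantity becomes $b - 2(d-1)\rho = B_n - n\rho$ where $B_n := b - 2(c_n-1)\rho$ is bounded uniformly in $n$, and
\[
\frac{(B_n - n\rho)^2}{4n\rho} = \frac{n\rho}{4} - \frac{B_n}{2} + \frac{B_n^2}{4n\rho}, \qquad (d-2)\rho = \frac{n\rho}{2} + (c_n - 2)\rho .
\]
After combining, $\tfrac{n\rho}{2} - \tfrac{n\rho}{4} = \tfrac{n\rho}{4}$, and --- this is the one point needing care --- all remaining $c_n$-dependent terms cancel, leaving
\[
r_{F_n(G)}(u,v) = \frac{n}{4}\,\rho \;+\; \Big( A + \frac{b}{2} - \rho \Big) \;-\; \frac{B_n^2}{4n\rho},
\]
that is, $r_{F_n(G)}(u,v) = \tfrac n4\rho + K + \varepsilon_n$ with $K = \tfrac12\big(r_G(u,x)+r_G(u,y)+r_G(v,x)+r_G(v,y)\big) - \rho$ independent of $n$ and $|\varepsilon_n| \le C/n$ for a constant $C$ depending only on $G,u,v,x,y$.

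Then I would subtract: since $K$ is the same for $F_{n+1}(G)$ and $F_n(G)$,
\[
r_{F_{n+1}(G)}(u,v) - r_{F_n(G)}(u,v) = \frac{\rho}{4} + (\varepsilon_{n+1} - \varepsilon_n),
\]
and $\varepsilon_{n+1} - \varepsilon_n \to 0$ as $n \to \infty$, which is exactly the claim. To handle the statement's ``$u,v$ with the largest resistance'' cleanly even if the optimal base-vertex pair varies with $n$, I would instead apply the expansion uniformly over the finitely many base-vertex pairs and over $d \in [\tfrac n2, \tfrac n2 + 2]$, obtaining $\max_{u,v} r_{F_n(G)}(u,v) = \tfrac n4\rho + K' + O(1/n)$ with $K' = \max_{u,v}\tfrac12\big(r_G(u,x)+r_G(u,y)+r_G(v,x)+r_G(v,y)\big) - \rho$ a fixed number; the same telescoping then applies. (One also notes the same-copy case of Theorem \ref{thm:mostgen} gives $r_G(u,v) - b^2/(4n\rho)$, which stays bounded and hence is not the maximum for large $n$.) The main, quite mild, obstacle is verifying the cancellation of the $c_n$-terms so that $K$ genuinely does not depend on where in $[\tfrac n2,\tfrac n2+2]$ the maximizing petal sits; everything else is routine bookkeeping together with the fact that $O(1/n)$ remainders vanish in the limit.
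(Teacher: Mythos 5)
Your proposal is correct and follows essentially the same route as the paper: substitute $d=\tfrac n2+c_n$ (the paper's $\alpha,\beta$) into the inter-petal formula of Theorem \ref{thm:mostgen}, expand the square, and watch the offset-dependent terms cancel so that only $\tfrac14 r_G(x,y)$ survives in the difference. Your organization --- isolating the exact expansion $r_{F_n(G)}(u,v)=\tfrac n4 r_G(x,y)+K+O(1/n)$ with $K$ independent of $c_n$ before telescoping --- is a cleaner way of doing the same computation, and your remarks about the maximizing pair possibly varying with $n$ and the same-copy case being bounded are careful touches the paper omits.
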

   
    \begin{proof}
    Assume $r_{F_n(G)}(u, v)$ is a maximum for $F_n(G)$. Then $d = \frac{n}{2}+ \alpha$ where $0 \leq \alpha \leq 2$. Assume similarly that $r_{F_{n+1}(G)}(u, v)$ is a maximum for $F_{n+1}(G)$. Then $d = \frac{n+1}{2}+ \beta$ where $0 \leq \beta \leq 2$. Then by Theorem \ref{thm:mostgen} we have
    \begin{align*}
        r_{F_{n+1}(G)}(u,v) =&\: r_G(u,y)+r_G(v,x)+\left(\frac{n+1}{2}+\beta - 2\right)r_G(x,y) \\
        &- \frac{[r_G(u,x)+r_G(v,y)-r_G(u,y)-r_G(v,x)-2(\frac{n+1}{2}+ \beta-1)r_G(x,y)]^2}{4(n+1)r_G(x,y)}
    \end{align*}{}
    
    and also
    
    \begin{align*}
        r_{F_n(G)}(u,v) =&\: r_G(u,y) + r_G(v,x) + \left(\frac{n}{2}+\alpha -2\right)r_G(x,y) \\ &- \frac{[r_G(u,x)+r_G(v,y)-r_G(u,y)-r_G(v,x)-2(\frac{n}{2}+\alpha-1)r_G(x,y)]^2}{4nr_G(x,y)}.
    \end{align*}{}
    
    %Before subtract, maybe expand top of fraction to split it into everything constant vs everything not. Call everything constant a new constant such as beta and gamma or something.
    For convenience in writing, let $\gamma = r_G(u,x)+r_G(v,y)-r_G(u,y)-r_G(v,x)-2\alpha r_G(x,y)+2r_G(x,y)$ and $\lambda = r_G(u,x)+r_G(v,y)-r_G(u,y)-r_G(v,x)-2\beta r_G(x,y)+r_G(x,y)$. 
    Plugging in $\gamma$ and $\lambda$ and subtracting the previous two equations yields
    \begin{align*}
        r_{F{n+1}(G)}(u,v)-r_{F_n(G)}(u,v) =&\: \left(\frac{1}{2}+\beta-\alpha\right)r_G(x,y)+\frac{(n+1)[\gamma - nr_G(x,y)]^2 - n[\lambda - nr_G(x,y)]^2}{4n(n+1)r_G(x,y)} \\
            =&\: \left(\frac{1}{2}+\beta -\alpha\right)r_G(x,y) + (n+1)\frac{\gamma^2-2\gamma n r_G(x,y)+n^2r_G(x,y)^2}{4n^2(1+\frac{1}{n})r_G(x,y)} \\
             &-n\frac{\lambda^2-2\lambda nr_G(x,y)+n^2r_G(x,y)^2}{4n^2(1+\frac{1}{n})r_G(x,y)} \\
            =&\: \left(\frac{1}{2}+\beta-\alpha\right)r_G(x,y)\\
            &+\frac{n^2r_G(x,y)(2\lambda -2\gamma +r_G(x,y))+n(\gamma^2-2\gamma r_G(x,y)-\lambda^2)+\gamma^2}{4n^2(1+\frac{1}{n})r_G(x,y)}
    \end{align*}{}
    Note that $2\lambda - 2\gamma = 4\alpha r_G(x,y) - 4\beta r_G(x,y) -2r_G(x,y)$. Now taking the limit as $n$ goes to infinity we have
    \begin{align*}
        \lim_{n \to \infty} r_{F{n+1}(G)}(u,v)-r_{F_n(G)}(u,v) =&\: \lim_{n \to \infty}\left[ \left(\frac{1}{2}+\beta-\alpha\right)r_G(x,y)+\frac{n^2r_G(x,y)^2(4\alpha -4\beta -1)}{4n^2(1+\frac{1}{n})r_G(x,y)}\right.\\
        &\left.+\frac{n(\gamma^2-2\gamma r_G(x,y)-\lambda^2)+\gamma^2}{4n^2(1+\frac{1}{n})r_G(x,y)}\right] \\
        =&\: \left(\frac 12 +\beta - \alpha\right)r_G(x,y)+\alpha r_G(x,y) - \beta r_G(x,y)-\frac 14 r_G(x,y) \\
        =&\: \frac 14 r_G(x,y).
    \end{align*}{}
    \end{proof}
    
    \begin{corollary}\label{cor:mostgenliminf}
        For a class of flower graphs with the same base graph $G$,
        \begin{align*}
            \lim_{n \to \infty} \max_{u,v}(r_{F_n(G)}(u,v)) = \infty.
        \end{align*}
    \end{corollary}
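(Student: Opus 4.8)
The plan is to prove the corollary directly, by pinning down a single pair of vertices of $F_n(G)$ whose effective resistance already tends to infinity, rather than summing the limiting increments of Theorem~\ref{thm:mostgendiff}. Fix any vertex $u$ of the base graph $G$, and for each $n \ge 3$ set $d = \lceil n/2 \rceil$, so that $2 \le d \le n$; in $F_n(G)$ let $u_1$ and $u_d$ be the copies of $u$ lying in the petals $G_1$ and $G_d$, which are then distinct petals. Since $\max_{u,v} r_{F_n(G)}(u,v) \ge r_{F_n(G)}(u_1,u_d)$, it suffices to prove that $r_{F_n(G)}(u_1,u_d) \to \infty$ as $n \to \infty$.

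I would obtain $r_{F_n(G)}(u_1,u_d)$ from the ``different copies'' case of Theorem~\ref{thm:mostgen}, taking $u_1,u_d$ in the roles of $u,v$. The key observation is that $u_1$ and $u_d$ are copies of the same base-graph vertex, so each of $r_G(u_1,x),r_G(u_d,x)$ equals $r_G(u,x)$ and each of $r_G(u_1,y),r_G(u_d,y)$ equals $r_G(u,y)$; consequently the combination $r_G(u_1,x)+r_G(u_d,y)-r_G(u_1,y)-r_G(u_d,x)$ inside the bracket of Theorem~\ref{thm:mostgen} is $0$, and the formula collapses to
\[
 r_{F_n(G)}(u_1,u_d)=r_G(u,x)+r_G(u,y)+(d-2)\,r_G(x,y)-\frac{(d-1)^2\,r_G(x,y)}{n}.
\]
Write $\rho=r_G(x,y)$; since $x\ne y$ and $G$ is connected, $\rho>0$. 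For $d=\lceil n/2\rceil$ one has $d-1\le n/2$, hence $(d-1)^2/n\le n/4$, and $d-2\ge n/2-2$, so
\[
 r_{F_n(G)}(u_1,u_d)\ \ge\ r_G(u,x)+r_G(u,y)+\bigl(\tfrac n2-2\bigr)\rho-\tfrac n4\rho\ =\ r_G(u,x)+r_G(u,y)+\bigl(\tfrac n4-2\bigr)\rho.
\]
As $\rho>0$, the right-hand side goes to $+\infty$, and therefore so does $\max_{u,v} r_{F_n(G)}(u,v)$.

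The computation is brief, and the only genuinely delicate point — which I expect to be the crux — is that after the bracket collapses, both the positive term $(d-2)\rho$ and the subtracted term $(d-1)^2\rho/n$ grow linearly in $n$, of sizes roughly $\tfrac n2\rho$ and $\tfrac n4\rho$; one must track these coefficients carefully to see that the positive contribution wins by an amount of order $\tfrac n4\rho$. (This of course matches, and reproves, the per-step growth rate $\tfrac14 r_G(x,y)$ of Theorem~\ref{thm:mostgendiff}.) A less direct route would be to note that for a fixed pair of vertices the passage from $F_n(G)$ to $F_{n+1}(G)$ leaves every term of the ``different copies'' formula of Theorem~\ref{thm:mostgen} unchanged except the denominator $4n\,r_G(x,y)$, so the maxima $M_n=\max_{u,v} r_{F_n(G)}(u,v)$ are nondecreasing, and then to bound the increment from below using Theorem~\ref{thm:maxd} (to locate the maximizing petal near $n/2$) together with Lemma~\ref{lem:ReverseTriangleIneq}; this works but requires more bookkeeping than the argument above.
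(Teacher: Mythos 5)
Your proposal is correct, and it takes a genuinely different route from the paper. The paper obtains this corollary as an immediate consequence of Theorem \ref{thm:mostgendiff}: since the increment $M_{n+1}-M_n$ of the maximal resistance tends to the positive constant $\tfrac14 r_G(x,y)$, the sequence $M_n$ must diverge. That derivation leans on the full machinery of Theorems \ref{thm:maxd} and \ref{thm:mostgendiff} (locating the maximizing petal, introducing the parameters $\alpha,\beta$, and carrying out the limit computation). You instead exhibit a single explicit pair whose resistance diverges: two copies $u_1,u_d$ of the same base vertex with $d=\lceil n/2\rceil$. The decisive simplification is your observation that for such a pair the combination $r_G(u_1,x)+r_G(u_d,y)-r_G(u_1,y)-r_G(u_d,x)$ vanishes identically, so the bracket in Theorem \ref{thm:mostgen} reduces to $-2(d-1)r_G(x,y)$ and the formula collapses to $r_G(u,x)+r_G(u,y)+(d-2)\rho-(d-1)^2\rho/n$ with $\rho=r_G(x,y)>0$; your bounds $(d-1)^2/n\le n/4$ and $d-2\ge n/2-2$ then give a lower bound of order $\tfrac{n}{4}\rho$. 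This is self-contained (it needs only Theorem \ref{thm:mostgen} and the metric property guaranteeing $\rho>0$), it sidesteps the slightly delicate bookkeeping of comparing maximizing pairs across different graphs $F_n(G)$ and $F_{n+1}(G)$, and as a bonus it produces a quantitative linear lower bound whose slope $\tfrac14\rho$ matches the asymptotic increment of Theorem \ref{thm:mostgendiff}. The paper's route, for its part, yields the sharper statement that the \emph{maximum} itself grows with increments converging exactly to $\tfrac14 r_G(x,y)$, which is more information than divergence alone. Both arguments are valid; yours is the more elementary proof of the corollary as stated.
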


    \subsection{Bounds for Kirchhoff Index and Kemeny's Constant}
    %In simplifying sums for upper bound I might have multiplied by m^2 instead of m(m-1) so it still holds but could maybe be better if we have time and think it's worth it.
    While we have not derived formulae for the Kirchhoff Index and Kemeny's constant for generalized flower graphs, we have derived bounds on these values.
    
    \begin{theorem}{}\label{thm:kirchbounds}
        Let $\kirchhoff(F_n(G))$ be the Kirchhoff index for the $n$th flower graph of $G$ and $\kirchhoff(G)$ be Kirchhoff index for the base graph $G$. Let $|V(G)| = m$. Then the following inequality holds.
        \begin{align*}
            n\kirchhoff(G)-\frac{m(m-1)r(x,y)}{2} \leq \kirchhoff(F_n(G)) \leq \kirchhoff(G)(n+nm(n-1))+\frac{r_G(x,y)(n^3-n^2)m^2}{4}
        \end{align*}{}
    \end{theorem}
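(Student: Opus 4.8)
The plan is to expand $\kirchhoff(F_n(G)) = \sum_{\{u,v\}} r_{F_n(G)}(u,v)$ over unordered pairs of distinct vertices and to split these pairs according to whether or not they lie inside a single petal $G_i$. Since distinct petals overlap in at most one (associated) vertex, no pair of distinct vertices lies in two petals, so this split is well defined and each within-petal pair is counted exactly once. For any pair, Theorem \ref{thm:mostgen} expresses $r_{F_n(G)}(u,v)$ as a ``base-graph part'' minus a correction of the form $(\text{numerator})^2/(4nr_G(x,y))$, which is nonnegative; discarding it produces upper bounds on resistances, while bounding it from above produces lower bounds. For the lower-bound direction I will only need within-petal pairs, where the numerator is $r_G(u,x)+r_G(v,y)-r_G(u,y)-r_G(v,x) = [r_G(u,x)-r_G(u,y)]+[r_G(v,y)-r_G(v,x)]$; by the reverse triangle inequality (Lemma \ref{lem:ReverseTriangleIneq}) this has absolute value at most $2r_G(x,y)$, so the correction is at most $r_G(x,y)/n$.

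For the lower bound, nonnegativity of resistance distance lets me discard every between-petal pair, so $\kirchhoff(F_n(G)) \ge \sum_{i=1}^{n}\sum_{\{u,v\}\subseteq V(G_i)} r_{F_n(G)}(u,v)$. Using the same-copy formula of Theorem \ref{thm:mostgen}, the correction bound just described, and $G_i\cong G$, each petal contributes at least $\sum_{\{u,v\}\subseteq V(G_i)} r_G(u,v) - \binom{m}{2}\frac{r_G(x,y)}{n} = \kirchhoff(G) - \frac{m(m-1)}{2n}r_G(x,y)$. Summing over the $n$ petals gives the stated lower bound $n\kirchhoff(G) - \frac{m(m-1)}{2}r_G(x,y)$ exactly.

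For the upper bound I handle the two parts of the split separately. For within-petal pairs, discarding the correction and using $G_i\cong G$ bounds their total contribution by $\sum_{i=1}^n \kirchhoff(G) = n\kirchhoff(G)$. For between-petal pairs I group by the unordered pair of petals $\{G_a,G_b\}$ in which the endpoints lie; there are $\binom{n}{2}$ such pairs, and each carries at most $|V(G_a)|\,|V(G_b)| = m^2$ vertex pairs (this over-counts pairs containing an associated vertex, which only helps an upper bound). Relabelling the cyclic list of petals so that $G_b$ is reached from $G_a$ along the shorter arc makes the parameter $d$ of Theorem \ref{thm:mostgen} satisfy $d \le \lfloor n/2\rfloor + 1$, hence $d-2 \le n/2$. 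Discarding the correction and summing the base-graph part $r_G(u,y)+r_G(v,x)+(d-2)r_G(x,y)$ over $u\in V(G_a)$ and $v\in V(G_b)$ gives at most $m\cdot\kirchhoff(G) + m\cdot\kirchhoff(G) + m^2\frac{n}{2}r_G(x,y)$, where I use that the sum of the resistances from one fixed vertex of $G$ to all the others is at most $\kirchhoff(G)$. Multiplying by $\binom{n}{2}$ and adding the within-petal contribution yields $n\kirchhoff(G) + n(n-1)m\,\kirchhoff(G) + \frac{n^2(n-1)m^2}{4}r_G(x,y)$, which is exactly $\kirchhoff(G)\big(n + nm(n-1)\big) + \frac{(n^3-n^2)m^2}{4}r_G(x,y)$.

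The main obstacle I anticipate is the bookkeeping in the between-petal portion of the upper bound: one must use the cyclic symmetry of $F_n(G)$ to apply Theorem \ref{thm:mostgen} with $d \le \lfloor n/2\rfloor+1$, note that counting $m^2$ vertex pairs per pair of petals is a harmless over-count because associated vertices sit in two petals, and — crucially — sum $r_G(u,\cdot)$ over an entire petal rather than bounding each such term individually by $\kirchhoff(G)$, since this refinement is what produces the factor $m$ rather than $m^2$ in the dominant $\kirchhoff(G)$ term and makes the stated coefficient attainable. The lower bound, by contrast, should be a short exact computation.
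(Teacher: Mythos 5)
Your proof is correct and follows essentially the same route as the paper: split pairs into within-petal and between-petal, use the reverse triangle inequality to bound the correction term by $r_G(x,y)/n$ for the lower bound, and for the upper bound discard the correction, bound $\sum_i r_G(i,y)$ by $\kirchhoff(G)$, and control the $(d-2)r_G(x,y)$ term. The only cosmetic difference is that you bound $d-2\le n/2$ per petal pair via cyclic symmetry and multiply by $\binom{n}{2}$, while the paper sums $d-2$ over $d=2,\dots,n$ and then over-estimates; both give the identical final coefficients.
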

    \begin{proof}
     Here we will write the Kirchhoff Index in terms of the resistances that exist within a copy of $G$ and the resistances that span into different copies of $G$. We refer to resistance distance in $F_n(G)$ as $r_F(i,j)$ and resistance distances in $G$ as $r(i,j)$. \\
     
    %THINK ABOUT: The only bounds I have found to ALWAYS hold have become pretty meaningless as n gets large (i.e. $\lim_{n\to \infty}\frac{actual}{bound} = \infty$) but the bound we were searching for initially (nKf(G)) also exhibits this behavior. Do we care? Or do we just
     %From equals4 on down I just got rid of the first part of the sum for i,j diff G to save space. If we'd like to redo some of this stuff that's cool
    % For the lower bound, we know that summing over the pairs of vertices in strictly different copies of $G$ is greater than or equal to summing over pairs of vertices in strictly different copies of $G$ and requiring neither vertex to be one of $x,y\in G$. Also, the resistance distance must be at least that in a complete graph with the same number of vertices as $F_n(G)$. With this information, Lemma \ref{lem:complete} and Theorem \ref{thm:mostgen} we will get our first inequality.
     For the lower bound we will add only the resistances between vertices that are in the same copy of $G$ by using Theorem \ref{thm:mostgen}. We also make use of Lemma \ref{lem:ReverseTriangleIneq} in the third line.
    \begin{align*}
        \kirchhoff(F_n(G)) =&\:\frac{n}{2}\sum_{i,j\in G_1}r_F(i,j)+\sum_{\substack{i\in G_k, j\in G_l\\i\notin G_l}}r_F(i,j)\\
        \geq&\: \frac{n}{2}\sum_{i,j\in G}\left(r(i,j)-\frac{[r(i,x)+r(j,y)-r(i,y)-r(j,x)]^2}{4nr(x,y)}\right)\\
        \geq&\:n\kirchhoff(G)-\frac{n}{2}\sum_{i,j\in G}\frac{r(x,y)}{n}\\
        =&\:n\kirchhoff(G)-\frac{m(m-1)r(x,y)}{2}
    \end{align*}{}

    Now for the upper bound. We again will add resistances in the same copy of $G$ and those in strictly different copies of $G$ using Theorem \ref{thm:mostgen}. 
    %Is this second equality true? I think so but could be good to think over again
    \begin{align*}
        \kirchhoff(F_n(G)) =& \: \frac{1}{2}\sum_{i,j\in F}r_F(i,j)\\
        =&\:\frac{n}{2}\sum_{i,j\in G_1}r_F(i,j) + \smashoperator{\sum_{\substack{i\in G_k,j\in G_l\\j\notin G_k}}}r_F(i,j)\\
        \leq&\: n\kirchhoff(G)+\frac{n}{2}\sum_{d=2}^n\sum_{i=1}^m\sum_{j=1}^m\left(r(i,y)+r(x,j)+(d-2)r(x,y)\right)\\
        =&\:n\kirchhoff(G)+\frac{n(n-1)m}{2}\sum_{i=1}^mr(i,y)+\frac{n(n-1)m}{2}\sum_{j=1}^mr(x,j)+\frac{r(x,y)n^2(n-1)m^2}{4}\\
        \leq&\: n\kirchhoff(G)+n(n-1)m\kirchhoff(G)+\frac{r(x,y)n^2(n-1)m^2}{4}\\
        =&\:\kirchhoff(G)(n+nm(n-1))+\frac{r(x,y)(n^3-n^2)m^2}{4}
    \end{align*}{}
    \end{proof}
    The lower bound on Kirchhoff index is admittedly quite rough as we are throwing away a lot of information in the proof.  However, the Kirchhoff index of a flower graph with $G=P_2$ and $n=3$ will achieve our lower bound. Note that $F_3(P_2)$ is simply a complete graph on 3 vertices. In Sections 3 and 4 we find exact expressions for certain families of flower graphs. These examples suggest that the upper bound is closer to the true value.
    %The lower bound on Kirchhoff index is admittedly quite rough as we are throwing away a lot of information. However, we still find that the Kirchhoff index of a flower graph with $G=P_2$ and $n=3$ will achieve our lower bound. Note that $F_3(P_2)$ is simply a complete graph on 3 vertices. 
    %With a more careful analysis we can obtain a sharper lower bound that is\\ $\kirchhoff(F_n(G)) \geq \frac{n(m-2)(3-4n+m(2n-1)}{m} + \frac{r(x,y)(n^3(m^2-3m+3)+6n^2(2m^2-8m+9)+n(5m^2-27m+33)+6)}{12}$.
    %Above sharper bound comes from page 68 Adam's blue notebook.
    \begin{theorem}\label{thm:kembounds}
    Let $\kemeny(F_n(G))$ be Kemeny's constant for the $n$th flower graph of $G$ and $\kemeny(G)$ be Kemeny's constant for the base graph $G$. Let $|V(G)| = m$ and $|E(G)| = q$. Then the following inequality holds.
    \begin{align*}
        \kemeny(G)-\frac{m(m-1)^3r(x,y)}{2nq}\leq \kemeny(F_n(G)) \leq \kemeny(G)(4n-1) + \frac{r_G(x,y)(n^2-3n+2)(2m-2)^2m^2}{8q_G}
    \end{align*}{}
    \end{theorem}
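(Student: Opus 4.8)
The plan is to run the same argument as in the proof of Theorem \ref{thm:kirchbounds}, but carrying degree weights through every step and invoking Theorem \ref{thm:kemeny} in place of Definition \ref{def:kirchhoff}. First I would record the basic data of $F_n(G)$: since petals are glued only along vertices, no two edges are ever identified, so $|E(F_n(G))| = nq$; and a vertex that is not an associated vertex keeps its degree from its copy of $G$, while each of the $n$ associated vertices has degree $\deg_G(x)+\deg_G(y)$. Thus, writing $d_i$ for the degree of $i$ in its copy of $G$ and $D_i$ for its degree in $F_n(G)$, we have $d_i \le D_i \le 2(m-1)$ for every vertex. By Theorem \ref{thm:kemeny}, $\kemeny(F_n(G)) = \frac{1}{4nq}\sum_{i,j} D_iD_j r_{F_n(G)}(i,j)$, and I split this sum into pairs lying in a common petal and pairs lying in distinct petals, exactly as in the proof of Theorem \ref{thm:kirchbounds}; by the cyclic symmetry of $F_n(G)$ the within-petal contribution equals $n$ times the contribution from a single petal.

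For the lower bound I would discard the nonnegative cross-petal contributions and estimate the within-petal terms using the same-copy case of Theorem \ref{thm:mostgen}: Lemma \ref{lem:ReverseTriangleIneq} gives $[r_G(i,x)+r_G(j,y)-r_G(i,y)-r_G(j,x)]^2 \le 4r_G(x,y)^2$, hence $r_{F_n(G)}(i,j) \ge r_G(i,j) - \frac{r_G(x,y)}{n}$. Using $D_i \ge d_i$, the main term reassembles into $\frac{1}{4q}\sum_{i,j\in G_1} d_id_j r_G(i,j) = \kemeny(G)$ (the factor $n$ from the $n$ petals cancelling one $n$ in the prefactor $\frac{1}{4nq}$), while the error term is $\frac{r_G(x,y)}{4nq}\sum_{i,j\in G_1} D_iD_j$, and a crude estimate of the degree products over the at most $m(m-1)$ ordered pairs in a petal yields the stated $\frac{m(m-1)^3 r(x,y)}{2nq}$.

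For the upper bound I would bound $r_{F_n(G)}(i,j) \le r_G(i,j)$ on within-petal pairs and $r_{F_n(G)}(i,j) \le r_G(i,y)+r_G(j,x)+(d-2)r_G(x,y)$ on cross-petal pairs, where $2 \le d \le n$ is the petal separation, in both cases simply dropping the subtracted square. The $(d-2)r_G(x,y)$ contributions, summed over $d$ from $2$ to $n$ with $\sum_{d=2}^n(d-2) = \tfrac{(n-1)(n-2)}{2} = \tfrac{n^2-3n+2}{2}$, over $m^2$ vertex pairs per petal pair and with each degree product bounded by $(2m-2)^2$, contribute exactly the term $\frac{r_G(x,y)(n^2-3n+2)(2m-2)^2 m^2}{8q}$. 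The remaining pieces — the within-petal $r_G(i,j)$ terms and the cross-petal $r_G(i,y)$ and $r_G(j,x)$ terms, each weighted by $D_iD_j$ — must be collapsed into $(4n-1)\kemeny(G)$, by re-bounding $D_i$ against $2d_i$ where possible and re-expressing the resulting sums as fractions of $d^T R_G d = 4q\,\kemeny(G)$.

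The main obstacle is precisely this last step. In the Kirchhoff case all weights are $1$, so the within-petal sum collapses cleanly to $n\,\kirchhoff(G)$ and the cross-petal resistance sums are literally sums of $r_G$'s that one reads off as multiples of $\kirchhoff(G)$; here the associated vertices carry larger degrees, so the analogous identities become inequalities, and the target inequality is phrased in terms of $\kemeny(G)$ rather than $\kirchhoff(G)$ or a maximum resistance. Getting precisely a multiple of $\kemeny(G)$ out of the degree-weighted within- and cross-petal sums therefore requires careful juggling of the bounds $d_i \le D_i \le 2(m-1)$ together with $\sum_{i,j\in G} d_id_j r_G(i,j) = 4q\,\kemeny(G)$, and this bookkeeping — not any single clever trick — is where the work lies.
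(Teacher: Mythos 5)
Your overall strategy is the paper's: split the degree-weighted sum from Theorem \ref{thm:kemeny} into within-petal and cross-petal pairs, use the same-copy case of Theorem \ref{thm:mostgen} together with Lemma \ref{lem:ReverseTriangleIneq} for the lower bound, and drop the subtracted square for the upper bound. Your lower-bound argument and your accounting of the $(d-2)r_G(x,y)$ contributions (yielding the term $\frac{r_G(x,y)(n^2-3n+2)(2m-2)^2m^2}{8q}$) match the paper essentially step for step.

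The gap is the one you flag yourself: you never actually produce the coefficient $4n-1$, and the tool you propose for doing so does not work. ``Re-bounding $D_i$ against $2d_i$'' fails at the associated vertices, since $D_x = d_{x_G}+d_{y_G}$ can exceed $2d_{x_G}$ whenever $d_{y_G} > d_{x_G}$; and even a correct blanket bound $D_iD_j \le 4d_id_j$ would already give $4\kemeny(G)$ from the within-petal term alone, while the cross-petal sums $\sum_{i,j} D_iD_j\,r_G(i,y)$ are not sub-sums of $d^TR_Gd$ and so do not reduce to multiples of $\kemeny(G)$ without further argument. The paper closes this by expanding $D_x = d_{x_G}+d_{y_G}$ (and likewise at $y$), so that each degree-weighted sum splits into a ``native'' sum bounded by the corresponding piece of $4q\,\kemeny(G)$ plus explicit correction sums, each of which is then bounded by $\kemeny(G)$ (respectively $(n-1)\kemeny(G)$ in the cross-petal part); the coefficient arises as $3 + 2(n-1) + 2(n-1) = 4n-1$. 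Without carrying out that bookkeeping, your argument proves an upper bound with an unspecified constant in place of $4n-1$, which is not the stated theorem. A smaller point: summing your lower-bound error term over the $m(m-1)$ ordered pairs with $D_iD_j \le 4(m-1)^2$ gives $\frac{m(m-1)^3 r(x,y)}{nq}$, not the claimed $\frac{m(m-1)^3 r(x,y)}{2nq}$; the paper's own computation is equally loose at this point, but the factor of $2$ does not appear for free.
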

    \begin{proof}
    We proceed in similar fashion as we did with the Kirchhoff index. Note that $|E(F_n(G))| = nq$. Where necessary we will note that the maximum degree a vertex in a flower graph can obtain is $2(m-1)$.
    \begin{small}
    \begin{align*}
        \kemeny(F_n(G)) =&\:\frac{1}{4nq}\sum_{i,j\in F}d_{i_F}d_{j_F}r_F(i,j)\\
        \geq&\:\frac{1}{4q}\sum_{i,j\in G_1}d_{i_F}d_{j_F}\left(r(i,j)-\frac{[r(i,x)+r(j,y)-r(i,y)-r(j,x)]^2}{4nr(x,y)}\right)\\
        \geq&\:\kemeny(G)-\frac{1}{4}\sum_{i,j \in G_1}\frac{4(m-1)^2r(x,y)}{n}\\
        =&\:\kemeny(G)-\frac{m(m-1)^3r(x,y)}{2nq}
    \end{align*}
    \end{small}
    %UPPER BOUND SEE PAGE 43 (AND 42)
    Now for the upper bound. Since the degree of vertices $x,y$ will be smaller in $G$ than they are in $F_n(G)$ we take caution and account for that in order to preserve the inequality.
    \begin{small}
    \begin{align*}
        \kemeny(F_n(G)) =&\:\frac{1}{4nq}\sum_{i,j\in F}d_{i_F}d_{j_F}r_F(i,j)\\
        =&\:\frac{1}{4q}\sum_{i,j\in G_k}d_{i_F}d_{j_F}r_{G_k}(i,j) + \frac{1}{4nq}\sum_{\substack{i\in G_k,j\in G_l\\j\notin G_k}}d_{i_F}d_{j_F}r_F(i,j)\\
        \leq&\:\kemeny(G)+\frac{1}{4q}\sum_{i\sim y}d_{i_G}d_{x_G}r_G(i,y) + \frac{1}{4q}\sum_{j \sim x}d_{j_G}d_{y_G}r_G(x,j)+\frac{1}{4nq}\sum_{\substack{i\in G_k,j\in G_l\\i\notin G_l}}d_{i_F}d_{j_F}r_F(i,j)\\
        \leq&\:3\kemeny(G)+\frac{1}{4q}\sum_{d=2}^n\sum_{i=1}^m\sum_{j=1}^md_{i_F}d_{j_F}(r_G(i,y)+r_G(x,j)+(d-2)r_G(x,y))\\
        =&\:3\kemeny(G)+\frac{n-1}{4q}\sum_{i=1}^m\sum_{j=1}^md_{i_F}d_{j_F}(r_G(i,y)+r_G(x,j))+\frac{r_G(x,y)(n^2-3n+2)}{8q}\sum_{i=1}^m\sum_{j=1}^md_{i_F}d_{j_F}\\
        \leq&\: 3\kemeny(G)+2(n-1)\kemeny(G)+\frac{n-1}{4q}\sum_{i\sim y}d_{i_G}d_{x_G}r_G(i,y) + \frac{n-1}{4q}\sum_{j \sim x}d_{j_G}d_{y_G}r_G(x,j)\\
        &\quad +\frac{r_G(x,y)(n^2-3n+2)(2m-2)^2m^2}{8q}\\
        \leq&\: (4n-1)\kemeny(G)+\frac{r_G(x,y)(n^2-3n+2)(2m-2)^2m^2}{8q}
    \end{align*}
    \end{small}
    \end{proof}
    Just as the Kirchhoff index lower bound, the lower bound for Kemeny's constant is quite rough. We are unaware of examples achieving the lower bound. In Sections 4 and 5, we derive exact expressions for Kemeny's constant in certain families of flower graphs.  As with the Kirchhoff index, these examples suggest the upper bound is closer to the true value.

\section{Complete Flower Graphs}
    The results given by Theorem \ref{thm:mostgen} are best used by applying them to subclasses of flower
    graphs where the base graph $G$ is from a specific family of graphs. By studying a family
    of graphs in which resistance distance is well-known or easily derived, we are able to derive expressions
    in terms of distances and resistances in the base graph in many cases. If it is possible to derive explicit
    expressions for resistance, it is also possible to create formulae for expressing Kemeny's constant
    and the Kirchoff index explicitly. The first such subclass of flower graphs that we will examine is the complete flower graph.
    
    \begin{definition}\label{def:completeflower}
        A \emph{complete flower graph} is a flower graph where $G = K_m$ for some $m \geq 3$ and $x, y \in G$ are arbitrary provided that $x \neq y$. We denote a complete flower $F_n(K_m)$.
    \end{definition}
    
    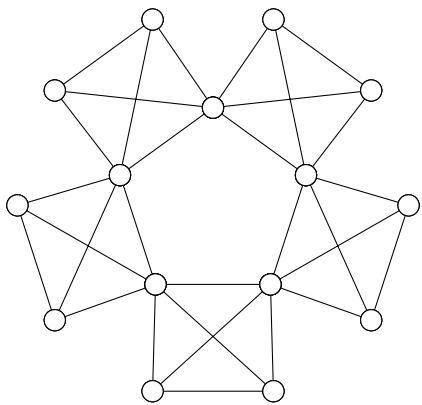
\begin{figure}[H]
        \centering
        \begin{tikzpicture}[scale=1.3]
        %\tikzstyle{every node}=[circle, fill=black, inner sep=1pt]
        \tikzstyle{every node}=[circle, draw, fill=white, minimum width = 8 pt, inner sep=1pt]
            \draw \foreach \x in {18, 90, 162, 234, 306} {
                (\x:1)node{}--(\x+72:1)node{}
                (\x:1)node{}--(\x+18:2)node{}
                (\x+72:1)node{}--(\x+54:2)node{}
                (\x+18:2)node{}--(\x+54:2)node{}
                (\x:1)node{}--(\x+54:2)node{}
                (\x+72:1)node{}--(\x+18:2)node{}
            };
        \end{tikzpicture}
        \caption{$F_5(K_4)$, a complete flower on $5$ copies of $K_4$}
        \label{fig:KFexample}
    \end{figure}
    
    \subsection{Resistance Distance}
        As expressed in the introduction to this section, if we can express resistance distance in the base
        graph simply, the generalized formulae become more useful. We may easily find an expression for the effective resistance on a complete graph. The following Lemma is easily verified with results from chapter 10 of \cite{GraphsAndMatrices}.

        \begin{lemma}\label{lem:complete}
            Let $u, v \in V(K_m)$, where $m \geq 3$. Then the resistance distance between $u$ and $v$ is given by
            \begin{equation}\label{eq:complete}
                r_{K_m}(u, v) = \frac 2{m} \text{ if } u \neq v
            \end{equation}
        \end{lemma}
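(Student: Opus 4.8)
The plan is to compute the Laplacian pseudoinverse of $K_m$ in closed form and substitute directly into the definition of resistance distance. Since the Laplacian of $K_m$ is $\lapl = mI - J$, with $J$ the all-ones matrix, $\lapl$ acts as zero on $\mathrm{span}(\mathbf{1})$ and as multiplication by $m$ on the orthogonal complement $\mathbf{1}^\perp$. Accordingly I would take $\lapl^\dagger = \tfrac1m\bigl(I - \tfrac1m J\bigr)$ and verify the four Penrose conditions (equivalently, invoke the standard identity $\lapl^\dagger = \bigl(\lapl + \tfrac1m J\bigr)^{-1} - \tfrac1m J$, valid for any connected graph on $m$ vertices, which is among the results of Chapter 10 of \cite{GraphsAndMatrices}).

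For distinct $u, v$ the vector $e_u - e_v$ lies in $\mathbf{1}^\perp$, so $J(e_u - e_v) = 0$ and hence $\lapl^\dagger(e_u - e_v) = \tfrac1m(e_u - e_v)$. The definition then gives
\[
r_{K_m}(u,v) = (e_u - e_v)^T \lapl^\dagger (e_u - e_v) = \tfrac1m (e_u - e_v)^T(e_u - e_v) = \tfrac2m ,
\]
using $\|e_u - e_v\|^2 = 2$ when $u \neq v$.

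As a sanity check I would also record the electrical-network argument: injecting a unit current at $u$ and withdrawing it at $v$, the symmetry of $K_m$ forces each of the remaining $m-2$ vertices to the common potential $\tfrac12(V_u + V_v)$, so no current flows among them; the network therefore collapses to the single edge $uv$ (resistance $1$) in parallel with $m-2$ length-two paths (resistance $2$ each), yielding $r_{K_m}(u,v)^{-1} = 1 + \tfrac{m-2}{2} = \tfrac m2$.

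There is no substantive obstacle here. The only steps that merit a sentence of care are confirming that the claimed matrix really is $\lapl^\dagger$ (a one-line check of the Penrose conditions, or a citation to \cite{GraphsAndMatrices}) or, in the network version, justifying the equipotential claim for the $m-2$ middle vertices; both are routine.
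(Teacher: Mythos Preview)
Your proposal is correct. The paper itself does not display a proof, merely citing Chapter~10 of \cite{GraphsAndMatrices}; the argument the authors suppressed (visible in the source as a commented-out proof) is precisely your second, network-based computation via the series--parallel rule. Your first approach through the explicit pseudoinverse $\lapl^\dagger = \tfrac1m\bigl(I - \tfrac1m J\bigr)$ is a clean algebraic alternative that sidesteps any need to justify the equipotential/series--parallel reduction, at the modest cost of verifying (or citing) that this matrix is indeed the Moore--Penrose inverse; either route is entirely adequate for such a standard fact.
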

        \hidden{
            \begin{proof}
                    Let $m \geq 3$ and $u, v \in V(K_m)$. There are two cases \par
                    \emph{Case 1.} $u = v$. Then $r_{K_m}(u, v) = 0$. \par
                    \emph{Case 2.} $u \neq v$. Then there exists an edge between $u$ and $v$ and $m-2$ paths of length
                    $2$ from $u$ to $v$. By the series-parallel rule,
                    \begin{align*}
                        r_{K_m} &= \frac 1{1 + \sum_{i=1}^{m-2} \frac{1}{2}} \\
                                &= \frac 1{\frac{m-2}2 + 1} \\
                                &= \frac 2{m}
                    \end{align*}
            \end{proof}
        }
        
        \begin{theorem}\label{thm:completeflower}
            Let $G$ be a complete flower $F_n(K_m)$ and $u$ and $v$ be vertices in $G$. Recall that I is the set of associated vertices connecting each copy of $K_m$, then
            \begin{align*}
                r_{F_n(K_m)}(u, v) &= \frac {2d(n-d)}{mn} &\text{ if both } u, v \in I \\
                r_{F_n(K_m)}(u, v) &= \frac{2d}m - \frac{(2d-1)^2}{2mn} &\text{ if one of } u, v \in I \\
                r_{F_n(K_m)}(u, v) &= \frac{2d}m - \frac{2(d-1)^2}{mn} &\text{ if neither } u, v \in I
            \end{align*}
            Where $d$ is the number of flower petals separating $u$ and $v$ including the petals containing $u$, and $v$.
        \end{theorem}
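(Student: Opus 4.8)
The plan is to specialize the general formula of Theorem \ref{thm:mostgen} to the case $G = K_m$, substituting the value $r_{K_m}(p,q) = 2/m$ (for distinct $p,q$) from Lemma \ref{lem:complete} everywhere it appears, and then simplify algebraically in each of the three cases according to how many of $u,v$ lie in the associated vertex set $I$. The key observation that makes this work cleanly is that in $K_m$ every pair of distinct vertices has the same resistance $2/m$, so each term like $r_G(u,x)$, $r_G(u,y)$, $r_G(v,x)$, $r_G(v,y)$, $r_G(x,y)$ is either $0$ (if the two vertices coincide) or $2/m$ (otherwise). Thus the bracketed quantity $r_G(u,x)+r_G(v,y)-r_G(u,y)-r_G(v,x)$ collapses to a small integer multiple of $2/m$ depending only on which coincidences occur.

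First I would set up notation: by the labeling convention of Theorem \ref{thm:mostgen}, $u \in V(G_1)$, $v \in V(G_d)$, and $x_1 \equiv y_2$, $x_{d-1} \equiv y_d$ are the relevant associated vertices; writing $x,y$ for the marked vertices of a generic petal, the "$x$ side" of petal $1$ (i.e.\ $x_1$) is the vertex shared with petal $2$, and the "$y$ side" of petal $d$ (i.e.\ $y_d$) is the vertex shared with petal $d-1$. Then I would handle the three cases in turn. Case \emph{neither $u$ nor $v$ in $I$}: here $u \ne x,y$ within $G_1$ and $v \ne x,y$ within $G_d$, so $r_G(u,x)=r_G(u,y)=r_G(v,x)=r_G(v,y)=2/m$ and $r_G(x,y)=2/m$; the bracket becomes $2/m+2/m-2/m-2/m - 2(d-1)(2/m) = -4(d-1)/m$, and the linear part is $2/m+2/m+(d-2)(2/m) = 2d/m$; plugging into Theorem \ref{thm:mostgen} gives $2d/m - [4(d-1)/m]^2 / (4n \cdot 2/m) = 2d/m - 2(d-1)^2/(mn)$, as claimed. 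Case \emph{one of $u,v$ in $I$}: without loss of generality (by the symmetry $d \leftrightarrow$ counting from the other side, or by a direct parallel computation) take $v \in I$; the natural sub-case is $v = x_d \equiv y_{d+1}$, so that as a vertex of $G_d$, $v$ plays the role of $x$, giving $r_G(v,x)=0$ and $r_G(v,y)=2/m$, while $u$ is still generic with $r_G(u,x)=r_G(u,y)=2/m$; the bracket is $2/m + 2/m - 2/m - 0 - 2(d-1)(2/m) = 2/m - 4(d-1)/m = (6-4d)/m$, i.e.\ $\pm(4d-... )$, and squaring gives $(4d-2)^2/m^2 = 4(2d-1)^2/m^2$; the linear part becomes $r_G(u,y)+r_G(v,x)+(d-2)r_G(x,y) = 2/m + 0 + (d-2)(2/m) = 2(d-1)/m$... here I would need to recheck whether the petal count $d$ in the statement refers to the same $d$ as in Theorem \ref{thm:mostgen} or is shifted by one when an endpoint is an associated vertex — the bookkeeping of what "$d$ petals separating $u$ and $v$ including the petals containing $u$ and $v$" means when $v \in I$ (and hence lies in two petals) is exactly the delicate point, and I expect to resolve it so that the final expression reads $2d/m - (2d-1)^2/(2mn)$. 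Case \emph{both $u,v \in I$}: now the second formula of Theorem \ref{thm:mostgen} does not apply; instead, since $u,v$ are both associated vertices, I would use Lemma \ref{lem:onesep} directly — or better, observe that contracting each petal $K_m$ to the single edge $xy$ of resistance $2/m$ turns $F_n(K_m)$ into a cycle $C_n$ with each edge having resistance $2/m$, and the resistance between two nodes $d$ steps apart on a cycle of $n$ unit-resistors is the classical parallel formula $d(n-d)/n$; scaling by $2/m$ yields $2d(n-d)/(mn)$.

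The main obstacle I anticipate is not any single calculation — each is a one-line substitution — but getting the indexing conventions consistent across the three cases: in particular, pinning down precisely which vertex of $G_1$ and $G_d$ is meant by $x$ versus $y$ in Theorem \ref{thm:mostgen}, and correctly accounting for the fact that an associated vertex in $I$ belongs to \emph{two} petals, so that the count $d$ and the "role" ($x$ vs.\ $y$) of $u$ or $v$ inside its petal shift appropriately. I would therefore state explicitly at the start which copy indices and which marked-vertex identifications I am using, verify the "neither in $I$" case against Theorem \ref{thm:mostgen} first since it is the cleanest, then derive the "one in $I$" case as a limiting/boundary instance of it (formally setting $r_G(v,x)=0$ and adjusting $d$), and finally treat "both in $I$" by the cycle-contraction argument, which sidesteps the general formula entirely and serves as an independent check that the conventions are coherent.
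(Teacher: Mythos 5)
Your approach is essentially the paper's: specialize the 2-separation machinery to $K_m$ using $r_{K_m}(p,q)=2/m$ and split into cases by membership in $I$; all three of your computations lead to the stated formulas. Two remarks. For the case $u,v\in I$, the paper simply takes $\{u,v\}$ itself as the $2$-separator, so that $r_{G_1}(u,i)=r_{G_1}(v,j)=0$ and $r_{G_1}(u,v)=r_{G_1}(u,j)=r_{G_1}(v,i)=2d/m$, and Lemma \ref{lem:twosep} gives $\frac{2d}{m}-\frac{(4d/m)^2}{4(2n/m)}=\frac{2d(n-d)}{mn}$; your contraction of each petal to a single resistor of resistance $2/m$ is a legitimate alternative (each petal meets the rest of the graph only in its two associated vertices, so it is two-terminal equivalent to a single resistor of resistance $r_{K_m}(x,y)$) and amounts to the same two-parallel-paths computation. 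The indexing issue you flag in the ``one of $u,v\in I$'' case resolves exactly as you suspect: in the convention of Theorem \ref{thm:mostgen}, the vertex of $G_d$ shared with $G_{d-1}$ plays the role of $x$ and the one shared with $G_{d+1}$ plays the role of $y$. You chose $v=x_d$, which makes $v$ the vertex shared by $G_{d-1}$ and $G_d$, so the inclusive petal count drops to $d-1$; that is precisely why your bracket came out as $(6-4d)/m$ and your expression as the stated formula with $d-1$ in place of $d$ (note $(6-4d)^2\neq(4d-2)^2$, so the square you wrote belongs to the other choice). Taking instead $v=y_d$, so that $r_G(v,y)=0$ and $r_G(v,x)=2/m$, gives linear part $2d/m$, bracket $-(4d-2)/m$, and correction $(2d-1)^2/(2mn)$, matching the statement. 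Equivalently, the paper sidesteps the ambiguity by placing the separator $j$ at the associated vertex adjacent to $v$ and defining $d$ as the number of petals in the component containing $u$ and $v$. With that convention pinned down, your argument is complete.
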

        \begin{proof}
            If $u = v$, then $r_{F_n(K_m)}(u, v) = 0$, so we assume that $u \neq v$.\par
            \emph{Case 1.} Suppose that $u, v \in I$. Let $i,j$ be the vertices of the 2-separation as in the proof of Theorem \ref{thm:mostgen}. The simplest 2-separation occurs when we set $u=i$ and $v=j$, so let $G_1$ and $G_2$ be the graphs created by the 2-separator $\{u, v\}$ and $d$ be the number of complete graphs in $G_1$. The terms $r_{G_1}(u,i)$ and $r_{G_1}(v,j)$ are both zero due to the selection of $i$ and $j$, and with a simple summation we have $r_{G_1}(u, v) = r_{G_1}(u, j) = r_{G_1}(v, i) = \sum_{i = 1}^d \frac{2}{m} = \frac{2d}{m}$. From Theorem \ref{thm:mostgen}, we have
            \begin{equation*}
                r_{F_n(K_m)}(u, v) = \frac{2d}{m} - \frac{{\left[-\frac{2d}{m}-\frac{2d}{m}\right]}^2}{4\left(\frac{2n}{m}\right)}
            \end{equation*}
            which gives the desired result when simplified. \par
            \emph{Case 2.} Suppose, without a loss of generality, that $u \in I$ and $v \in O$. We take $i=u$ to be one of the 2-separators and let the other 2-separator $j$ be a vertex adjacent to $v$ such that $u$ and $v$ are in the same component of the 2-separation. The resistances remain identical to those of case 1 with the exception that $r_{G_1}(v, j)$ becomes $\frac{2}{m}$, so
            \begin{align*}
                r_{F_n(K_m)}(u, v) &= \frac{2d}{m} - \frac{{\left[\frac{2}{m} - \frac{2d}{m} - \frac{2d}{m}\right]}^2}{4\left(\frac{2d}{m} + \frac{2(n-d)}{m}\right)} \\
                r_{F_n(K_m)}(x, y) &= \frac{2d}{m} - \frac{(2d-1)^2}{2mn}
            \end{align*} \par
            \emph{Case 3.} Suppose that $u, v \in O$. If we select $i \in I$ to be either vertex adjacent to $u$ and $j \in I$ to be adjacent to $v$ such that $u$ and $v$ are both in the same component of the 2-separation. The only resistance that changes from case 2 is $r_{G_1}(u, i) = \frac{2}{m}$, giving
            \begin{align*}
                r_{F_n(K_m)}(u, v) &= \frac{2d}{m} - \frac{{\left[\frac{2}{m} + \frac{2}{m} - \frac{2d}{m} - \frac{2d}{m}\right]}^2}{4\left(\frac{2d}{m} + \frac{2(n-d)}{m}\right)} \\
                                   &= \frac{2d}{m} - \frac{2{(d-1)}^2}{mn}
            \end{align*}
        \end{proof}

        This gives the interesting result that if $u, v$ are in the same copy of $K_m$ and neither is in $I$,
        $r_{F_n(K_m)}(u, v) = r_{K_m}(u, v)$.
        \begin{theorem}\label{thm:maxcompleteflower}
        The maximum resistance in a complete flower graph $F_n(K_m)$ is given by
        \begin{align*}
            \max(r_{F_n(K_m)}(u,v)) =&\: \frac{n+4}{2m} &\text{if $n$ is even}\\
            \max(r_{F_n(K_m)}(u,v)) =&\: \frac{n^2+4n-1}{2mn} &\text{if $n$ is odd}
        \end{align*}
        \end{theorem}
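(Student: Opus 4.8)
The plan is to combine the three explicit resistance formulas of Theorem~\ref{thm:completeflower} with the localization of the maximum provided by Theorem~\ref{thm:maxd}. First I would note that any pair $u,v$ realizing the maximum must lie in distinct petals: in the same-copy case the remark following Theorem~\ref{thm:completeflower} (or the second formula of Theorem~\ref{thm:mostgen} together with Lemma~\ref{lem:complete}) shows the resistance never exceeds $r_{K_m}(u,v)=2/m$, which is dwarfed by the values below. So I may assume $u\in G_1$, $v\in G_d$ for some $2\le d\le n-1$, and invoke Theorem~\ref{thm:maxd} to confine $d$ to the window $\frac n2\le d\le \frac n2+2$ (with $d=\frac{n+1}2$ forced when $n$ is odd).

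Next I would show that, for each fixed $d$, the largest of the three values in Theorem~\ref{thm:completeflower} is the one with neither $u$ nor $v$ in $I$. This is a short computation: subtracting the ``one endpoint in $I$'' value from the ``neither in $I$'' value gives $\frac{(2d-1)^2}{2mn}-\frac{2(d-1)^2}{mn}=\frac{4d-3}{2mn}>0$, and subtracting the ``both in $I$'' value from the ``one in $I$'' value gives $\frac{4d-1}{2mn}>0$. (A non-$I$ vertex is available in every petal since $m\ge 3$, so this value is genuinely attained.) Hence $\max_{u,v} r_{F_n(K_m)}(u,v)=\max_d f(d)$, where $f(d)=\frac{2d}{m}-\frac{2(d-1)^2}{mn}$ and $d$ runs over the admissible integers.

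It remains to maximize $f$. Viewed as a function of a real variable, $f$ is a downward parabola with vertex at $d=\frac n2+1$; moreover, for the relevant vertices one checks $r_{K_m}(u,x)=r_{K_m}(v,y)=r_{K_m}(u,y)=r_{K_m}(v,x)=\frac 2m$, so the quantity $b=r_G(u,x)+r_G(v,y)-r_G(u,y)-r_G(v,x)$ appearing in Theorem~\ref{thm:maxd} vanishes and the ambiguity in that theorem disappears. When $n$ is even, $\frac n2+1$ is an integer lying in the window $[\frac n2,\frac n2+2]$, and $f\!\left(\frac n2+1\right)=\frac{n+4}{2m}$. When $n$ is odd, the integers $d=\frac{n+1}2$ and $d=\frac{n+3}2$ are equidistant from the vertex, hence give equal values of $f$, and $f\!\left(\frac{n+1}2\right)=\frac{n^2+4n-1}{2mn}$; this is consistent with Theorem~\ref{thm:maxd}, which places the maximum at $d=\frac{n+1}2$.

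I do not anticipate a real obstacle: the only care needed is checking that the three-way comparison is valid for every admissible $d$ (it holds for all $d\ge 1$) and handling the even/odd bookkeeping in the final evaluation. An alternative route would skip Theorem~\ref{thm:maxd} and directly maximize each of the three piecewise expressions over all integers $d\in\{1,\dots,n-1\}$, but invoking Theorem~\ref{thm:maxd} trims the case analysis considerably.
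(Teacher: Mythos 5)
Your proposal is correct and takes essentially the same route as the paper's (very terse) proof: invoke Theorem~\ref{thm:maxd} to localize $d$, compare the three formulas of Theorem~\ref{thm:completeflower} to see that the ``neither in $I$'' case dominates, and evaluate at $d=\frac n2+1$ (resp.\ $d=\frac{n+1}2$). Your explicit difference computations $\frac{4d-3}{2mn}$ and $\frac{4d-1}{2mn}$, and the observation that $b=0$ for non-connector vertices so the parabola's vertex sits exactly at $d=\frac n2+1$, are details the paper leaves implicit but are correct.
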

        \begin{proof}
            Using Theorems \ref{thm:maxd} and \ref{thm:completeflower} to compare potential maximums we find that the largest resistance occurs between nodes $u,v\in O$ with a value of $d=\frac{n}{2}+1$ if $n$ is even and $d=\frac{n+1}{2}$ if $n$ is odd.
        \end{proof}
        
    \subsection{Kirchhoff Index and Kemeny's Constant}
        \begin{theorem}\label{thm:completekirch}
            The Kirchhoff Index of a complete flower is given by
            \begin{align*}
                \kirchhoff(F_n(K_m)) = \frac{n(5+12n+n^2+m^2(-1+6n+n^2)-m(1+18n+2n^2))}{6m}
            \end{align*}{}
        \end{theorem}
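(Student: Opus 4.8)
The plan is to evaluate the sum in Definition~\ref{def:kirchhoff} directly, feeding in the three closed forms for resistance supplied by Theorem~\ref{thm:completeflower}. Write $I$ for the set of $n$ associated vertices and $O = V(F_n(K_m)) \setminus I$ for the remaining $n(m-2)$ vertices, so that $|V(F_n(K_m))| = n(m-1)$. The automorphism group of $F_n(K_m)$ acts transitively on $I$ (rotate the petals) and transitively on $O$ (rotate the petals and permute the non-associated vertices inside a petal), so it suffices to compute the two one-vertex sums $S_I = \sum_{w \ne u} r_{F_n(K_m)}(u,w)$ for a fixed $u \in I$ and $S_O = \sum_{w \ne u} r_{F_n(K_m)}(u,w)$ for a fixed $u \in O$; then
\[
\kirchhoff(F_n(K_m)) = \tfrac12\big(n\,S_I + n(m-2)\,S_O\big) = \tfrac{n}{2}\big(S_I + (m-2)S_O\big).
\]

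To get $S_I$, I would split the sum according to whether the second endpoint lies in $I$ or in $O$ and according to the petal-distance $d$ of Theorem~\ref{thm:completeflower}. The $n-1$ other vertices of $I$ realize each $d \in \{1, \dots, n-1\}$ exactly once, and for each $d \in \{1, \dots, n\}$ there are exactly $m-2$ vertices of $O$ at petal-distance $d$ from $u$, so
\[
S_I = \sum_{d=1}^{n-1}\frac{2d(n-d)}{mn} + (m-2)\sum_{d=1}^{n}\left(\frac{2d}{m} - \frac{(2d-1)^2}{2mn}\right).
\]
For a fixed $u \in O$ the bookkeeping is the same, except that $u$ now sees all $n$ vertices of $I$ (one at each $d \in \{1, \dots, n\}$), the $m-3$ other non-associated vertices in $u$'s own petal each contribute resistance $2/m$, and pairs with both endpoints in $O$ use the third formula of Theorem~\ref{thm:completeflower}:
\[
S_O = \sum_{d=1}^{n}\left(\frac{2d}{m} - \frac{(2d-1)^2}{2mn}\right) + \frac{2(m-3)}{m} + (m-2)\sum_{d=2}^{n}\left(\frac{2d}{m} - \frac{2(d-1)^2}{mn}\right).
\]
One subtlety to record: going around the cycle the two ways, a pair of vertices is separated by two petal-counts, namely $d$ and $n-d$ when both endpoints are in $I$, $d$ and $n+1-d$ when exactly one is in $I$, and $d$ and $n+2-d$ when both are outside $I$. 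Each relevant expression in Theorem~\ref{thm:completeflower} is invariant under the corresponding substitution, so the two displays above are well defined regardless of which count one assigns to each pair.

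Finally, every sum above is a polynomial in $d$ of degree at most two, hence closes in elementary form via $\sum_{d=1}^{N} d = \frac{N(N+1)}{2}$, $\sum_{d=1}^{N} d^2 = \frac{N(N+1)(2N+1)}{6}$, $\sum_{d=1}^{N}(2d-1)^2 = \frac{N(2N-1)(2N+1)}{3}$, and $\sum_{d=1}^{n-1} d(n-d) = \frac{n(n^2-1)}{6}$. These reduce $S_I$ and $S_O$ to explicit rational functions of $m$ and $n$; substituting into $\kirchhoff(F_n(K_m)) = \tfrac{n}{2}(S_I + (m-2)S_O)$ and collecting powers of $m$ and $n$ yields the stated formula. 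I expect the only genuine difficulty to be organizational --- keeping the case split, the ``same-petal'' contributions, and the distance multiplicities straight --- together with a routine but lengthy polynomial simplification at the end; checking the outcome against a small instance such as $F_3(K_3)$ guards against sign and counting slips.
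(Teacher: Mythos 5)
Your proposal is correct and follows essentially the same route as the paper: both enumerate the pairs by the three cases of Theorem \ref{thm:completeflower} together with the petal-distance $d$, with identical multiplicities, and your expression $\tfrac{n}{2}\bigl(S_I+(m-2)S_O\bigr)$ expands term-by-term into exactly the paper's four-part sum. Your explicit check that each resistance formula is invariant under the two possible petal-counts ($d\leftrightarrow n-d$, $n+1-d$, $n+2-d$) is a welcome detail the paper leaves implicit, and the summations do simplify to the stated formula.
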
{}
        \begin{proof}
        Because the closed-form expressions for the resistance distance vary, to compute the Kirchhoff index of a complete flower $F_n(K_m)$, we must take a sum across each of the different expressions.
        
        To get the result we will first add all the resistances between vertices in $I$, this will be our first summation term. Next we add resistances between all possible vertices where exactly one of them is in $I$. That is our second summation term. We next add all the resistances between vertices in the same copy of $K_m$ but are not in $I$. That is our third term. The final summation term adds all possible resistance distances between vertices in different copies of $K_m$ where neither vertex is in $I$.
        \begin{align*}
        %Here IN means is a connector, OUT means is not a connector
            \kirchhoff(F_n(K_m)) =&\frac{1}{2}\left( n\sum_{d=1}^{n-1} \left(\frac {2d(n-d)}{mn}\right) +2n(m-2)\sum_{d=1}^n\left(\frac{2d}m - \frac{(2d-1)^2}{2mn}\right)\right.\\
            &+ n(m-2)(m-3)\left(\frac{2}{m}\right)
            \left.+ n(m-2)^2\sum_{d=2}^n\left(\frac{2d}m - \frac{2(d-1)^2}{mn}\right) \right)
        \end{align*}
        Simplifying these summations gives the desired result.
        \end{proof}
        %TO GET THIS I CHANGED THE FORMULAS SLIGHTLY. I THINK ONE WAS MISLABELED MAYBE?? MIGHT HAVE SWITCHED DEFINITION OF O,I. I'LL COMMENT THEM IN BELOW
        %x,y are the connectors: (2d(n-d))/mn
        %x,y NOT connectors: 2d/m - (2(d-1)^2)/mn
        %One of each: I used yours: 2d/m - (2d-1)^2/2mn
        %YA definitely just a mislabel either in my mathematica or in the definition

        \begin{theorem}\label{thm:completekem}
            Kemeny's Constant of a complete flower is given by
            \begin{align*}
                \kemeny(F_n(K_m)) = \frac{(m-1)(-12n + m(n^2+6n-1))}{6m}
            \end{align*}{}
        \end{theorem}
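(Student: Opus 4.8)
The plan is to apply Theorem \ref{thm:kemeny}: writing $q=|E(F_n(K_m))|$ for the edge count, Kemeny's constant is $\kemeny(F_n(K_m))=\tfrac{1}{4q}\sum_{i,j}d_id_j\,r_{F_n(K_m)}(i,j)$, and the strategy is to evaluate this degree-weighted sum via the same four-way partition of vertex pairs already used in the proof of Theorem \ref{thm:completekirch}, now inserting the resistance values supplied by Theorem \ref{thm:completeflower}.

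First I would record the edge count and degree sequence. Since $F_n(K_m)$ is $n$ copies of $K_m$ glued along $n$ vertices, $q=n\binom{m}{2}=\tfrac{nm(m-1)}{2}$. Each associated vertex in $I$ is an identification of two vertices from adjacent copies and so has degree $2(m-1)$; each of the $n(m-2)$ remaining vertices (those in $O$) keeps its degree $m-1$ from $K_m$. Hence a pair of vertices contributes a degree weight of $4(m-1)^2$, $2(m-1)^2$, or $(m-1)^2$ according to whether it has two, one, or zero endpoints in $I$.

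Next I would split $\sum_{i,j}d_id_j r(i,j)$ into the four groups from Theorem \ref{thm:completekirch}: (i) both endpoints in $I$, with weight $4(m-1)^2$, resistance $\tfrac{2d(n-d)}{mn}$, and multiplicity $n$ for each $d=1,\dots,n-1$; (ii) exactly one endpoint in $I$, with weight $2(m-1)^2$, resistance $\tfrac{2d}{m}-\tfrac{(2d-1)^2}{2mn}$, and multiplicity $2n(m-2)$ for each $d=1,\dots,n$; (iii) both endpoints in $O$ and in the same petal, with weight $(m-1)^2$, resistance $\tfrac 2m$, and multiplicity $n(m-2)(m-3)$; and (iv) both endpoints in $O$ but in different petals, with weight $(m-1)^2$, resistance $\tfrac{2d}{m}-\tfrac{2(d-1)^2}{mn}$, and multiplicity $n(m-2)^2$ for each $d=2,\dots,n$. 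Pulling out the common factor $\tfrac{(m-1)^2 n}{4q}=\tfrac{m-1}{2m}$ and evaluating the elementary power sums $\sum d$ and $\sum d^2$ over the indicated ranges turns the whole expression into a polynomial identity in $m$ and $n$ that simplifies to the claimed closed form.

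Conceptually there is nothing new beyond the proof of Theorem \ref{thm:completekirch}; the only work is carrying the degree factors through the same bookkeeping and then the algebra. The place to be careful about is case (ii), where an associated vertex lies in two adjacent petals, so the number of $O$-vertices at petal-distance $d$ from it must be tallied consistently with the convention used in Theorem \ref{thm:completeflower} (the multiplicities above are forced, since summed they recover $n(n-1)$, $2n^2(m-2)$, $n(m-2)(m-3)$, and $n(n-1)(m-2)^2$ ordered pairs respectively). A sanity check of the final expression against a small instance such as $F_3(K_3)$, for which the formula predicts Kemeny's constant $\tfrac{14}{3}$, is worthwhile before declaring the simplification correct.
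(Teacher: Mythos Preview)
Your proposal is correct and mirrors the paper's own proof essentially step for step: the paper likewise records $q=\tfrac{nm(m-1)}{2}$, notes $d_i=2(m-1)$ for $i\in I$ and $d_j=m-1$ for $j\in O$, and then plugs the same four-case decomposition with the same multiplicities and degree weights into Theorem~\ref{thm:kemeny} before simplifying. Your added observation that $(m-1)^2$ factors out cleanly and your sanity check on $F_3(K_3)$ are nice touches, but the substance is identical.
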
{}
        \begin{proof}
        We begin by noting that there are $\frac{nm(m-1)}{2}$ edges in a complete flower graph. Then we proceed as we did to find the Kirchhoff index only multiplying by the degrees of the vertices as Theorem \ref{thm:kemeny} calls for. Note that if $i\in I$ and $j\in O$ then $d_i=2m-2$ and $d_j=m-1$. Then using Theorem \ref{thm:kemeny} we have
        \begin{align*}
            \kemeny(F_n(K_m)) =&\: \frac{1}{2nm(m-1)}\left(n(2m-2)^2\sum_{d=1}^{n-1} \left(\frac {2d(n-d)}{mn}\right) \right.\\
             &+ 2n(m-2)(m-1)(2m-2)\sum_{d=1}^{n} \left(\frac{2d}m - \frac{(2d-1)^2}{2mn}\right) \\
            &+\left. n(m-2)(m-3)(m-1)^2\frac{2}{m} + n(m-2)^2(m-1)^2\sum_{d=2}^{n} \left(\frac{2d}m - \frac{2(d-1)^2}{mn}\right)\right)
        \end{align*}{}
        Once again, simplifying this expression will yield the desired result.
        \end{proof}
        Comparing these results to the bounds from  Theorems \ref{thm:kirchbounds}, \ref{thm:kembounds} we find that as $n\to\infty$ the ratio of the upper bound for the Kirchhoff index to the actual Kirchhoff index approaches $\frac{3m^2}{(m-1)^2}$. Similarly, we find that as $n\to\infty$ the ratio of the upper bound for the Kemeny's constant to the actual Kemeny's constant approaches $12$.
        
        %we find that $\lim_{n\to\infty}\frac{Upper Bound}{\kirchhoff(F_n(K_m))}=\frac{3(m+2)^2}{m^2}$ and $\lim_{n\to\infty}\frac{Upper Bound}{\kemeny(F_n(K_m))}=\frac{3m^2}{(m-1)^2}$. The real value of both Kirchhoff index and Kemeny's constant becomes infinitely greater than our lower bounds as $n\to\infty$.
        
       % we find that as $n\to\infty$ the upper bound for the Kirchhoff index divided by the actual Kirchhoff index for a complete flower approaches $\frac{3(m+2)^2}{m^2}$.

    \subsection{Example: $SF_n$}
        \begin{definition}
            A \emph{sunflower graph} is a subclass of flower graphs where $G = K_3$. We denote a sunflower graph with $n$ copies of $K_3$ as $SF_n$. See Figure \ref{fig:SFexample}. \\
        \end{definition}
        
        The construction of $SF_n$ creates a cycle on $n$ vertices consisting of the $u, v$ we selected. We refer to vertices on this cycle as the \emph{inner vertex set} of $SF_n$ and frequently refer to the copies of $K_3$ as the $petals$ of $SF_n$.
    
        \begin{figure}[H]
            \centering
            \begin{tikzpicture}
            %\tikzstyle{every node}=[circle, fill=black, inner sep=1pt]
            \tikzstyle{every node}=[circle, draw, fill=white, minimum width = 8 pt, inner sep=1pt]
                \foreach \x in {30,90,150,210,270,330}{
                \draw{(\x:1)node{}--(\x+60:1)node{}
                (\x-30:1.5)node{}--(\x:1)node{}--(\x+30:1.5)node{}
                };}
                %\foreach \x in {30,90,150,210,270,330}{
                %\draw{(\x:1)node{$i$}};}
            \end{tikzpicture}{}
            \caption{$SF_6$}
            \label{fig:SFexample}
        \end{figure}

        %As GSF had a separate equation for u,v in same petal, we can (should?) mention that in this section now (since currently we say nothing about it)
        \subsubsection{Formulas for Resistance Distance}
            Due to the previously computed formulae for complete flowers, deriving expressions for the resistance distance
            between vertices on a sunflower graph is trivial.
            \begin{theorem}\label{thm:sun}
                 Let $SF_n$ be a sunflower graph. Recall that I is the set of associated vertices connecting each copy of $K_3$, then
                %let $I$ be the set
                %of inner vertices of $SF_n$ and let $O = V(SF_n) \setminus I$. Then
                %given two vertices $u,v \in V(SF_n)$, $u \neq v$,  resistance distance is given by
            
                \begin{align}
                    r_{SF_n}(u,v) &= \frac{2d(n-d)}{3n} &\text{ if both } u, v \in I \\
                    r_{SF_n}(u,v) &= \frac{4nd-4d^2+4d-1}{6n} &\text{ if only } u \in I \\
                    r_{SF_n}(u,v) &= \frac{2(nd-(d-1)^2)}{3n} &\text{ if neither } u, v \in I
                \end{align}
                Where d is the number of flower petals separating $u$ and $v$ including the petals containing $u$ and $v$.
                %, that is, if $u,v$ are in the same copy of $K_3$ then $d=1$.
            \end{theorem}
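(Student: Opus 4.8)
The plan is to derive all three formulas as the specialization $m = 3$ of Theorem~\ref{thm:completeflower}. Since $K_3$ is vertex-transitive (indeed edge-transitive), the choice of marked vertices $x \neq y$ in the base graph is immaterial, so $SF_n = F_n(K_3)$ is well-defined, and the associated vertex set $I$ together with the separating-petal count $d$ carry exactly the same meaning as in Theorem~\ref{thm:completeflower}. The proof therefore reduces to recording that theorem's three cases and simplifying each at $m=3$.

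First I would invoke Theorem~\ref{thm:completeflower} directly: it already splits according to whether both, exactly one, or neither of $u,v$ lies in $I$, which is precisely the trichotomy of the statement to be proved. Substituting $m=3$ into the first case gives $r_{SF_n}(u,v) = \frac{2d(n-d)}{3n}$, which is $(1)$. For the second case, substituting $m=3$ into $\frac{2d}{m} - \frac{(2d-1)^2}{2mn}$ and placing both terms over the common denominator $6n$ yields $\frac{4nd - (2d-1)^2}{6n} = \frac{4nd - 4d^2 + 4d - 1}{6n}$, which is $(2)$. For the third case, substituting $m=3$ into $\frac{2d}{m} - \frac{2(d-1)^2}{mn}$ and clearing denominators gives $\frac{2nd - 2(d-1)^2}{3n} = \frac{2(nd - (d-1)^2)}{3n}$, which is $(3)$.

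There is no genuine obstacle here: the argument is a direct substitution followed by elementary algebraic simplification, and the only point worth double-checking is that the case distinction of Theorem~\ref{thm:completeflower} aligns with the case distinction stated here, which it does by construction. If one preferred a self-contained derivation one could instead re-run the three $2$-separation computations from the proof of Theorem~\ref{thm:completeflower} using $r_{K_3}(\cdot,\cdot) = \frac{2}{3}$ from Lemma~\ref{lem:complete}, but this merely reproduces the same arithmetic, so I would present the proof as the one-line specialization above.
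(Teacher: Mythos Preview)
Your proposal is correct and matches the paper's own proof exactly: the paper also simply substitutes $m=3$ into Theorem~\ref{thm:completeflower}. Your additional remarks about vertex-transitivity and the alignment of the case distinction are accurate and, if anything, make the one-line specialization slightly more explicit than the paper's version.
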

            \begin{proof}
                Substituting $m = 3$ into Theorem \ref{thm:completeflower} yields the desired result.
            \end{proof}
            
            %Once solid on how to write GSF formulas, probably adjust this proof to just plugging in m=3 in those
            \hidden{
            \begin{proof}
             To obtain our results we will use the formulas in Theorem \ref{thm:rdistGSF} noting that in all cases we have $m = 3$ and if ever $u,v \in O$ we will have $k = 1$ and $l = 1$. Note also that this case of a sunflower eliminates the need for a fourth formula where both $u,v$ are in the same copy $C_3$ and in the outer vertex set. 
             
             \emph{Case 1.} Both of $u, v$ are in the outer vertex set and in different copies of $C_3$. Thus $k = 1$ and $l = 1$. Then we have
             \begin{align*}
                 r_{SF_{3,n}}(u,v) &= \frac{3(d+1+1)-d-1(1+2)-1(1+2)}{3} - \frac{(1+1+d-3d)^2}{3n(3-1)}\\
                 &= \frac{3d+6-d-6}{3} - \frac{(2-2d)^2}{6n}\\
                 &= \frac{4nd-(4-8d+4d^2)}{6n} \\
                 &= \frac{2(nd-1+2d-d^2)}{3n}\\
                 &= \frac{2(nd-(d-1)^2)}{3n}.
             \end{align*}{}
             
             \emph{Case 2.} Both $u, v$ are in the inner vertex set. Substituting in $m = 3$ we have
             \begin{align*}
                 r_{SF_{3,n}}(u,v) &= \frac{d(n-d)(3-1)}{3n}\\
                 &= \frac{2d(n-d)}{3n}.
             \end{align*}{}
             
             \emph{Case 3.} $u$ is in the inner vertex set and $v$ is in the outer vertex set. Using $m = 3$ and $l = 1$ we have
             \begin{align*}
                 r_{SF_{3,n}}(u,v) &= \frac{(3-1)(d-1)}{3} + \frac{3(1+1)-(1+1)^2}{3} - \frac{(1-3d+d)^2}{3n(3-1)}\\
                 &= \frac{2(d-1)}{3} + \frac{2}{3} - \frac{1-4d+4d^2}{6n}\\
                 &= \frac{4nd-4n+4n-(1-4d+4d^2)}{6n}\\
                 &= \frac{4nd - 4d^2 +4d - 1}{6n}. 
             \end{align*}{}
             
             Thus we have obtained all the desired formulae.
             \end{proof}
             DO WE STILL WANT THESE FIGURES HERE OR NO (NOW THAT THIS ISN'T THE FIRST EXPLAINING OF FLOWER GRAPHS AND THEIR DECOMPOSITION AND SUCH)?
        
        %I'll stick the SF_5 graphs here for now
            \begin{figure}[H]
                \centering
                \begin{tikzpicture}[scale=.80,transform shape]
            \Vertex[x=0,y=1*3,L=i] {1}
            \Vertex[x= 1.4265*1.75, y= 1.9365*1.75,L=u] 2
            \Vertex[x= -0.5878*3,y= -0.809*3,L={j=v}] 7
            \SetVertexNoLabel{
            \Vertex[x= 0.951*3,y= 0.309*3] 3
            \Vertex[x= 0.5878*3,y= -0.809*3] 5
            \Vertex[x= -0.951*3,y= 0.309*3] 9
            \Vertex[x= 2.3082*1.75, y= -0.75*1.75] 4
            \Vertex[x=0, y=-3*1.35] 6
            \Vertex[x= -2.3082*1.75, y= -.75*1.75] 8
            \Vertex[x= -1.4265*1.75, y= 1.9365*1.75] {10}
            }

            \tikzstyle{LabelStyle}=[fill=white,sloped]
            \Edge[](1)(3)
            \Edge[label=](3)(5)
            \Edge[label=](5)(7)
            \Edge[label=](7)(9)
            \Edge[label=](9)(1)
            \Edge[label=](1)(2)
            \Edge[label=](2)(3)
            \Edge[](1)({10})
            \Edge[label=](3)(4)
            \Edge[label=](4)(5)
            \Edge[label=](5)(6)
            \Edge[label=](6)(7)
            \Edge[label=](7)(8)
            \Edge[label=](8)(9)  
            \Edge[label=](9)({10})
            \end{tikzpicture}
                \caption{$SF_{3,5}$ with two vertices $i, j$ selected as a two separator. Note that $v = j$.}
                \label{fig:SF5}
            \end{figure}
        
            \begin{figure}[H]
                \centering
                \begin{tikzpicture}[scale=1]
                %Group u,v,i,j first
                \Vertex[x=0,y=0,L=i] {1}
                \Vertex[x=1,y=1.732,L=u]{2}
                \Vertex[x=6,y=0,L={v=j}]{7}
                \Vertex[x=7, y=0,L=j]{8}
                \Vertex[x=11, y=0,L=i]{12}
                \SetVertexNoLabel{
                \Vertex[x=2,y=0]{3}
                \Vertex[x=3,y=1.732] {4}
                \Vertex[x=4,y=0]{5}
                \Vertex[x=5,y=1.732]{6}
                \Vertex[x=8, y=1.732]{9}
                \Vertex[x=9,y=0]{10}
                \Vertex[x=10, y=1.732]{11}
                }
                \Edge[](1)(2)
                \Edge[](1)(3)
                \Edge[](2)(3)
                \Edge[](3)(4)
                \Edge[](3)(5)
                \Edge[](4)(5)
                \Edge[](5)(6)
                \Edge[](5)(7)
                \Edge[](6)(7)
    
                \Edge[](8)(9)
                \Edge[](8)({10})
                \Edge[](9)({10})
                \Edge[]({10})({11})
                \Edge[]({10})({12})
                \Edge[]({11})({12})
                \end{tikzpicture}
                \caption{$SF_{3,5}$ after a two separation occurs on $i$ and $j$.}
                \label{fig:SF5sep}
            \end{figure}
            }

            From Theorems \ref{thm:maxcompleteflower}, \ref{thm:mostgendiff} and Corollary \ref{cor:mostgenliminf} we have
            \[\max(r_{SF_n}(u,v)) = \frac{n+4}{6}           \quad\text{ if $n$ is even}\]
            \[\max(r_{SF_n}(u,v)) = \frac{n^2 + 4n - 1}{6n} \quad\text{ if $n$ is odd}\]
            \[\lim_{n\to\infty}[\max(r_{SF_{n+1}}(u,v)-\max(r_{SF_{n}}(u,v)] = \frac{1}{6} \]  \[\lim_{n\to\infty}\max(r_{SF_n}(u,v)) = \infty.\]
            
            %Reword theorem if switching d?
            %\begin{theorem}
             %   Given a Sunflower Graph $SF_{3,n}$ the maximum resistance distance will be between two nodes $u, v \in O$ such that if $u$ 
            
            %\end{theorem}
            
            %Then Kirchhoff, then Kemeny
        \subsubsection{Kirchhoff Index and Kemeny's Constant}
            \begin{theorem}\label{thm:KirchhoffSF}
            The Kirchhoff Index of a Sunflower Graph on $n$ triangles, $SF_n$, is given by
            \begin{equation*}
                \kirchhoff(SF_{n}) = \frac{1}{18} (4n^3+12n^2-7n).
            \end{equation*}
            \end{theorem}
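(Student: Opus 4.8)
The plan is to read off $\kirchhoff(SF_n)$ as a direct specialization of Theorem \ref{thm:completekirch}, since a sunflower graph is by definition the complete flower $F_n(K_3)$. Setting $m = 3$ in the closed form
\[
\kirchhoff(F_n(K_m)) = \frac{n\bigl(5+12n+n^2+m^2(-1+6n+n^2)-m(1+18n+2n^2)\bigr)}{6m},
\]
the bracketed polynomial collapses: the constant terms contribute $5 - 9 - 3 = -7$, the linear terms contribute $12n + 54n - 54n = 12n$, and the quadratic terms contribute $n^2 + 9n^2 - 6n^2 = 4n^2$. Hence the numerator becomes $n(4n^2 + 12n - 7)$ and the denominator $18$, giving $\kirchhoff(SF_n) = \tfrac{1}{18}(4n^3 + 12n^2 - 7n)$ exactly. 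The only work is this routine arithmetic.

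As a cross-check, and an alternative self-contained route, one can instead start from the three resistance formulas of Theorem \ref{thm:sun} and perform the case analysis directly. A sunflower $SF_n$ has $n$ inner vertices forming a cycle and $n$ outer vertices, one per petal. Writing $\kirchhoff(SF_n) = \tfrac12 \sum_{u,v} r_{SF_n}(u,v)$ and grouping ordered pairs by (i) both endpoints inner, (ii) exactly one endpoint inner, (iii) both endpoints outer, one is led to
\[
\kirchhoff(SF_n) = \frac12\left( n\sum_{d=1}^{n-1}\frac{2d(n-d)}{3n} + 2n\sum_{d=1}^{n}\left(\frac{2d}{3} - \frac{(2d-1)^2}{6n}\right) + n\sum_{d=2}^{n}\left(\frac{2d}{3} - \frac{2(d-1)^2}{3n}\right)\right),
\]
which is precisely the $m = 3$ instance of the expression appearing in the proof of Theorem \ref{thm:completekirch} (the $(m-2)(m-3)$ term dropping out). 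Evaluating the three sums with the standard identities $\sum_{d=1}^{N} d = \tfrac{N(N+1)}{2}$ and $\sum_{d=1}^{N} d^2 = \tfrac{N(N+1)(2N+1)}{6}$ again produces $\tfrac{1}{18}(4n^3 + 12n^2 - 7n)$.

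There is no genuine obstacle here beyond careful bookkeeping. The two places to watch are the indexing in case (i), where the cyclic structure of the inner $n$-cycle means that for each gap $d \in \{1,\dots,n-1\}$ there are exactly $n$ ordered contributions (hence the leading factor $n$ together with the overall $\tfrac12$), and the off-by-one in the ranges of the other two sums, which reflect whether the petal containing a given endpoint is itself counted among the $d$ separating petals. Once those are set correctly, expansion and the two power-sum formulas finish the computation.
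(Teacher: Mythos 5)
Your proposal is correct and takes essentially the same approach as the paper: the paper's proof of Theorem \ref{thm:KirchhoffSF} is precisely the substitution of $m=3$ into Theorem \ref{thm:completekirch}, and your arithmetic simplification (constants $-7$, linear $12n$, quadratic $4n^2$, denominator $18$) checks out. The self-contained cross-check via the $m=3$ instance of the summation in the proof of Theorem \ref{thm:completekirch} is a nice verification but not a different method.
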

            \begin{proof}
            Using $m = 3$ with the result in Theorem \ref{thm:completekirch} and simplifying gives the desired result.
            \end{proof}
            
            \begin{theorem}\label{thm:kemenySF}
            Kemeny's constant for a Sunflower Graph $SF_{n}$ is given by
            \begin{equation*}
                \kemeny(SF_{n}) = \frac{1}{3}(n^2+2n-1)
            \end{equation*}
            \end{theorem}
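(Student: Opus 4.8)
The plan is to obtain $\kemeny(SF_n)$ as an immediate specialization of the complete-flower formula established in Theorem~\ref{thm:completekem}, in exact parallel with how Theorem~\ref{thm:KirchhoffSF} was deduced from Theorem~\ref{thm:completekirch}. Since $SF_n = F_n(K_3)$ by definition, I would substitute $m = 3$ into $\kemeny(F_n(K_m)) = \frac{(m-1)(-12n + m(n^2+6n-1))}{6m}$ and simplify: this gives $\frac{2(-12n + 3(n^2+6n-1))}{18} = \frac{2(3n^2 + 6n - 3)}{18} = \frac{n^2 + 2n - 1}{3}$, which is the claimed expression. The only work is routine arithmetic, so there is essentially no obstacle along this route; it is the proof I would present.

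For completeness I note that a self-contained derivation is also available. One would invoke Theorem~\ref{thm:kemeny}, which gives $\kemeny(G) = \frac{1}{4q}\sum_{i,j} d_i d_j\, r_G(i,j)$, together with the resistance formulas of Theorem~\ref{thm:sun}. For $SF_n$ one records that $q = 3n$ (each triangle contributes three edges and the identifications remove none), each of the $n$ inner vertices of $I$ has degree $4$, and each of the $n$ outer vertices has degree $2$. The double sum then splits over the three cases of Theorem~\ref{thm:sun}: pairs of inner vertices (degree weight $16$), mixed inner/outer pairs (weight $8$), outer/outer pairs in the same petal (weight $4$, using the "same copy" specialization of Theorem~\ref{thm:mostgen}), and outer/outer pairs in distinct petals (weight $4$). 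Each resulting sum over the separation parameter $d$ is a sum of a quadratic in $d$ over a range such as $d = 1,\dots,n-1$ or $d = 2,\dots,n$, multiplied by the count of vertex pairs separated by exactly $d$ petals, and is evaluated with the standard identities for $\sum d$ and $\sum d^2$, exactly as in the proof of Theorem~\ref{thm:completekem}.

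The only genuinely delicate point in that second approach is the bookkeeping: correctly counting, for each vertex type, how many (unordered) pairs are separated by exactly $d$ petals in the cyclic structure, and being careful to use the same-petal resistance formula rather than the cross-petal one when the two vertices lie in a common copy of $K_3$. Since this is precisely the counting already carried out for complete flowers, I expect the substitution-based proof to be the clean and correct one to give, and I would simply state that setting $m = 3$ in Theorem~\ref{thm:completekem} and simplifying yields the result.
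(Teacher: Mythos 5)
Your primary argument is exactly the paper's proof: substitute $m=3$ into Theorem~\ref{thm:completekem} and simplify, and your arithmetic checks out. The proposal is correct and takes essentially the same approach as the paper.
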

            \begin{proof}
            Plugging in $m=3$ into the formula for Kemeny's constant from Theorem \ref{thm:completekem} and simplifying yields the desired result.
            \end{proof}

\section{Generalized Sunflower Graphs}
    \hidden{
    %%%%%%%%%%%%%%%%%%%%%%%%%%%%%%%%%%%%%%%%%%%%%%%%%%%%%%%%%%%
        \begin{figure}[H]
        \centering
        \begin{tikzpicture}[auto, node distance=1.53cm, everyloop/.style={},thick,main node/.style={circle,draw}]
            \node[main node] (1) {1};
            \node[main node] (2) [above right of=1] {2};
            \node[main node] (3) [above right of=2] {3};
            \node[main node] (4) [above right of=3] {4};
            \node[main node] (5) [below right of=4] {5};
            \node[main node] (6) [below right of=5] {6};
            \node[main node] (7) [below right of=6] {7};
            \node[main node] (8) [left of=7] {8};
            \node[main node] (9) [right of=1] {9};
            \node[main node] (10) [above of=1] {10};
            \node[main node] (11) [left of=4] {11};
            
            \path (1) edge node {} (2);
            \path (2) edge node {} (3);
            \path (3) edge node {} (4);
            \path (1) edge node {} (10);
            \path (10) edge node {} (11);
            \path (11) edge node {} (4);
            
            \path (4) edge node {} (5);
            \path (5) edge node {} (6);
            \path (6) edge node {} (7);
            
            \path (7) edge node {} (8);
            \path (8) edge node {} (9);
            \path (9) edge node {} (1);

        \end{tikzpicture}{}
            \caption{}
            \label{fig:SFexample}
    \end{figure}
    }%%%%%%%%%%%%%%%%%%%%%%%%%%%%%%%%%%%%%%%%%%%%%%%%%%%%%%%%%%%%%%%%%%%%%
    
    The construction of \emph{complete flowers} arose from generating a flower graph with the base graph being a complete
    graph. We see sunflower graphs as a subclass of complete flowers, but if we instead take the base graph to be a
    cycle on $n$ vertices, it is possible to construct another class of graphs that contains sunflower graphs.
    
    \begin{definition}
        A \emph{generalized sunflower}, denoted $F_n(C_m)$, is a class of flower graphs obtained by setting $G = C_m$ and
        selecting two vertices $x,y$ in this cycle, then following the construction of flower graphs.
        In each $C_m$ we call the shorter path from $x$ to $y$ $D_1$ and the longer path $D_2$. Let $p=d(x,y)$.
        %If they're the same length?
    \end{definition}
    
    \begin{figure}[H]
        \centering
        \begin{tikzpicture}[scale = .85, semithick]
        %\tikzstyle{every node}=[circle, fill=black, inner sep=1pt]
        \tikzstyle{every node}=[circle, draw, fill=white, minimum width = 8 pt, inner sep=1pt]
        \draw {
        (0,0)node{}--(1,0)node{}
        (1,0)node{}--(1,1)node{}
        (1,1)node{}--(1,2)node{}
        (1,2)node{}--(0,2)node{}
        (0,2)node{}--(-1,2)node{}
        (-1,2)node{}--(-1,1)node{}
        (-1,1)node{}--(-1,0)node{}
        (-1,0)node{}--(0,0)node{}
        
        (-1,0)node{}--(-.75,-1)node{}
        (-.75,-1)node{}--(0,-1)node{}
        (0,-1)node{}--(.75,-1)node{}
        (.75,-1)node{}--(1,0)node{}
        (1,0)node{}--(2,.25)node{}
        (2,.25)node{}--(2,1)node{}
        (2,1)node{}--(2,1.75)node{}
        (2,1.75)node{}--(1,2)node{}
        (1,2)node{}--(.75,3)node{}
        (.75,3)node{}--(0,3)node{}
        (0,3)node{}--(-.75,3)node{}
        (-.75,3)node{}--(-1,2)node{}
        (-1,2)node{}--(-2,1.75)node{}
        (-2,1.75)node{}--(-2,1)node{}
        (-2,1)node{}--(-2,.25)node{}
        (-2,.25)node{}--(-1,0)node{}
        };
    \end{tikzpicture}
    \caption{$F_4(C_6)$}
    \end{figure}
    
    \subsection{Resistance Distance}
        \begin{lemma}\label{lem:rescyc}
            Let $C_m$ be a cycle on $m$ vertices. Then the resistance distance between any
            vertices $u,v \in V(C_m)$ is given by the formula:
            \begin{equation}\label{eq:rescyc}
                r_{C_m}(u,v) = \frac{(m-d(u,v)) d(u,v)}{m}
            \end{equation}
            where $d(u, v)$ indicates the standard distance between two vertices of a graph.
        \end{lemma}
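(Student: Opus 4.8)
The plan is to reduce the cycle to a single pair of parallel edges using the standard series and parallel rules for effective resistance (Chapter 10 of \cite{GraphsAndMatrices}), and then read off the answer. Write $d = d(u,v)$. If $u = v$ then $d = 0$ and both sides vanish, so I would immediately reduce to the case $u \neq v$, hence $1 \le d \le m-1$.

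In $C_m$ there are exactly two internally disjoint $u$--$v$ paths, say $P_1$ of length $d$ and $P_2$ of length $m - d$, and together they use every edge of $C_m$ exactly once. Every internal vertex of $P_i$ has degree $2$ in $C_m$, so repeatedly applying the series rule collapses $P_i$ to a single edge joining $u$ and $v$ whose resistance equals the number of unit resistors in series along it, namely $\operatorname{length}(P_i)$. After both reductions only two parallel edges between $u$ and $v$ remain, of resistances $d$ and $m - d$, and the parallel rule gives
\[ r_{C_m}(u,v) = \left( \frac{1}{d} + \frac{1}{m-d} \right)^{-1} = \frac{d(m-d)}{m} = \frac{(m - d(u,v))\, d(u,v)}{m}, \]
which is the claimed formula.

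As an alternative route that stays entirely within the machinery already set up, one can treat $\{u,v\}$ as a $2$-separator of $C_m$ whose two pieces are the paths $P_1$ and $P_2$, and specialize Lemma \ref{lem:twosep} to the separating vertices themselves: the four cross terms collapse using $r_{P_1}(u,u) = r_{P_1}(v,v) = 0$, leaving the parallel-combination identity $r_{C_m}(u,v) = r_{P_1}(u,v)\, r_{P_2}(u,v) / \bigl(r_{P_1}(u,v) + r_{P_2}(u,v)\bigr)$; since a path on $\ell$ edges has end-to-end resistance $\ell$, this again yields $d(m-d)/m$.

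I expect essentially no obstacle here. The only points worth a word of care are the degenerate case $u = v$ and the harmless case of even $m$ with $d = m/2$, where the two $u$--$v$ paths have equal length but the parallel rule still applies verbatim; the validity of the series and parallel reductions themselves is classical and needs no new argument.
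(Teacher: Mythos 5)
Your series--parallel reduction is exactly the argument the paper intends (its proof simply defers to ``techniques from chapter 10 of \cite{GraphsAndMatrices}'', and the authors' own suppressed computation is the same parallel combination of the two $u$--$v$ paths of lengths $d$ and $m-d$). The proposal is correct and complete, including the degenerate cases, so nothing further is needed.
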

        \begin{proof}
            This result is easy to verify with techniques from chapter 10 of \cite{GraphsAndMatrices}.
            \hidden{
                Let $G$ be a cycle on $m$ vertices and let two vertices $u, v$ be given. \\
                Since $u,v$ lie on a closed loop, we may use the series-parallel rule to calculate
                resistance between the two vertices. \\
                Let $d(u,v)$ denote the standard distance from $u$ to $v$. Then we have two paths between $u$
                and $v$, one of length $d(u, v)$ and one of length $m - d(u, v)$. \\
                Applying the series-parallel rule to $G$, we obtain:
                \begin{align*}
                    r_G(u, v) &= \frac{1}{\frac{1}{d(u, v)} + \frac{1}{m - d(u,v)}} \\
                        &= \frac{1}{\frac{m}{(m - d(u ,v)) d(u, v)}} \\
                        &= \frac{(m - d(u, v))d(u, v)}{m}
                \end{align*}
            }
        \end{proof}
        
        \begin{theorem}
            If $G$ is a generalized sunflower graph $F_n(C_m)$ and $u \in C_{m_i}$, $v \in C_{m_j}$, $i \neq j$, then
            \begin{align*}
                r_G(u, v) =&\: \frac{(p_u + l)(m-p_u-l) + k(m-k) + p_u(m-p_u)(d-2)}{m} \\
                           &- \frac{[p_v(m-p_v-2k) + p_u(m-p_u+2l) - 2dp_u(m-p_u)]^2}{4nmp_u(m-p_u)}.
            \end{align*}
            If $u, v \in C_{m_i}$, then
            \begin{align*}
                r_G(u, v) &= \frac{(k-l)(m-k+l)}{m} - \frac{p_u(k-l)^2}{nm(m-p_u)} &\text{ if } u,v\in D_i \\
                r_G(u, v) &= \frac{(k+l)(m-k-l)}{m} - \frac{[p_u^2 + p_u(2l-m) + p_v(m-2k-p_v)]^2}{4nmp_u(m-p_u)} &\text{ if } u\in D_i,  v\in D_j, i\neq j
            \end{align*}
            %pu=pv; pu\neq pv
            where $d$ is the number of flower petals separating $u$ and $v$, inclusive, $p_u$ is the length of the path from $x_i$ to $y_i$ that does not contain $u$, $p_v$ is the length of the path from $x_j$ to $y_j$ that does not contain $v$, $l$ is the distance from $x$ to $u$ along the path containing $u$, and $k$ is the distance from $x$ to $v$ along the path containing $v$. If $u$ or $v \in I$, we instead define $p_u$ to be the distance from $x$ to $y$ in the base graph $G$. If $p_u=p_v$ label such that $k\geq l$.
        \end{theorem}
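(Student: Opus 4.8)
The plan is to obtain all three formulas as direct specializations of Theorem~\ref{thm:mostgen}, using Lemma~\ref{lem:rescyc} together with Lemma~\ref{lem:onesep} to evaluate each of the elementary resistances that appear. The key observation is that within a single petal $C_m$, the two marked vertices $x$ and $y$ are joined by two arcs of lengths $p$ and $m-p$, so by Lemma~\ref{lem:rescyc} we have $r_G(x,y) = p(m-p)/m$; after substituting into Theorem~\ref{thm:mostgen} this is exactly the quantity $p_u(m-p_u)$ that appears in all the denominators (with $p_u=p$ when neither endpoint is an associated vertex, and $p_u$ redefined to be the full $x$-to-$y$ distance otherwise). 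So the whole proof reduces to bookkeeping: express $r_G(u,x)$, $r_G(u,y)$, $r_G(v,x)$, $r_G(v,y)$, and $r_G(u,v)$ in terms of the parameters $k$, $l$, $p_u$, $p_v$, $m$, and then simplify.

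First I would handle the case $u,v$ in distinct petals. Here $u\in C_{m_i}$ sits at distance $l$ from $x$ along its arc (so distance $p_u - \ell$... actually $m - p_u - l$ from $x$ along the other way, but the relevant short distances are $l$ to $x$ and $m - p_u - l$ is the complementary arc length, giving $r_{C_m}(u,x)$ via Lemma~\ref{lem:rescyc}); similarly $v$ is at distance $k$ from $y$. I would carefully write $d(u,x) = l$, $d(u,y) = p_u - l$ or the complementary value depending on which arc $u$ lies on, and analogously for $v$, then plug the four resistances $r_G(u,y),r_G(v,x),r_G(u,x),r_G(v,y)$ and $r_G(x,y)=p_u(m-p_u)/m$ into the first formula of Theorem~\ref{thm:mostgen}. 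The term $r_G(u,y)+r_G(v,x)+(d-2)r_G(x,y)$ produces the claimed numerator $(p_u+l)(m-p_u-l) + k(m-k) + p_u(m-p_u)(d-2)$ over $m$ after collecting the arc-length products, and the squared bracket, once one clears the common factor $1/m$, becomes $[p_v(m-p_v-2k) + p_u(m-p_u+2l) - 2dp_u(m-p_u)]^2$ with denominator $4nmp_u(m-p_u)$.

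For the same-petal cases I would split according to whether $u$ and $v$ lie on the same arc $D_i$ or on opposite arcs, and use the second formula of Theorem~\ref{thm:mostgen}, now with $r_G(u,v)$ computed directly from Lemma~\ref{lem:rescyc}: if both are on the same arc then $d(u,v) = |k - l|$, giving $r_G(u,v) = (k-l)(m-k+l)/m$ (up to the sign convention fixed by the labeling $k\ge l$ when $p_u=p_v$), while if they are on opposite arcs then $d(u,v) = k + l$. The bracket term in the second formula of Theorem~\ref{thm:mostgen}, namely $r_G(u,x)+r_G(v,y)-r_G(u,y)-r_G(v,x)$, telescopes nicely once each resistance is written as an arc-product, collapsing to $p_u(k-l)$ in the same-arc case (yielding the subtracted term $p_u(k-l)^2/[nm(m-p_u)]$ after dividing by $4nr_G(x,y)$) and to the displayed quadratic $p_u^2 + p_u(2l-m) + p_v(m-2k-p_v)$ in the opposite-arc case.

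The main obstacle is purely combinatorial rather than conceptual: one has to fix consistent conventions for which arc each of $u,v$ lies on and in which cyclic direction $l,k$ are measured, so that the ``short distance'' passed to Lemma~\ref{lem:rescyc} is the correct one and the signs in $(k-l)$ versus $(l-k)$ come out matching the stated formula — this is exactly why the statement includes the tie-breaking rule ``if $p_u=p_v$ label such that $k\ge l$'' and the separate redefinition of $p_u$ when an endpoint is associated. I would organize the computation by first recording, in each subcase, the five needed resistances as explicit arc-product expressions, and only then substitute into Theorem~\ref{thm:mostgen}; the final algebraic simplification (expanding the square, clearing the $1/m$ factors from numerator and denominator) is routine and I would not carry it out in detail.
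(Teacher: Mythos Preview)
Your proposal is correct and follows essentially the same approach as the paper: compute the five base-graph resistances $r_{C_m}(u,x)$, $r_{C_m}(u,y)$, $r_{C_m}(v,x)$, $r_{C_m}(v,y)$, $r_{C_m}(x,y)$ (and, in the same-petal case, $r_{C_m}(u,v)$) via Lemma~\ref{lem:rescyc}, substitute into Theorem~\ref{thm:mostgen}, and simplify. One small slip to fix when you write it out: $k$ is the distance from $x$ (not $y$) to $v$ along $v$'s arc, so $r_{C_m}(v,x)=k(m-k)/m$ and $r_{C_m}(v,y)=(p_v+k)(m-p_v-k)/m$.
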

        
        \begin{proof}
        We first consider the case where $u\in C_{m_i}$ and $v\in C_{m_j}$. Label such that $C_{m_i} = C_{m_1}$ and $C_{m_j} = C_{m_d}$. Then by Lemma \ref{lem:rescyc} we have
        \begin{align*}
            r_{C_m}(u,y) =&\: \frac{(p_u+l)(m-p_u-l)}{m}  &r_{C_m}(u,x) =& \frac{l(m-l)}{m}\\
            r_{C_m}(v,y) =& \frac{(p_v+k)(m-p_v-k)}{m}  &r_{C_m}(v,x) =& \frac{k(m-k)}{m}   &r_{C_m}(x,y) =& \frac{p_u(m-p_u)}{m}.
        \end{align*}
        Plugging these values into Theorem \ref{thm:mostgen} and simplifying yields the desired result.
        
        Next consider the cases where $u,v$ are in the same copy of $C_m$. The same resistances from above will hold in these cases, all that is left is to determine $r_{C_m}(u,v)$.
        
        If $u,v\in D_i$, label such that $k\geq l$. Then there is a path of length $k-l$ between $u,v$ so we have $r_{C_m}(u,v) = \frac{(k-l)(m-k+l)}{m}$. Also note that in this case $p_u = p_v$. Using this with Theorem \ref{thm:mostgen} and simplifying we get the desired result.
        
        If $u\in D_i$ and $v\in D_j$ we have a path of length $l+k$ between $u,v$ so we have $r_{C_m}(u,v) = \frac{(l+k)(m-l-k)}{m}$. Using this with Theorem \ref{thm:mostgen} and simplifying we get the desired result.
        \end{proof}
        
        \hidden{
            \begin{theorem}
            Given a generalized sunflower graph $SF_n(C_m)$, let $d$ be the shortest distance between
            the associated vertices and let the set $I$ denote the vertices that lie on the cycle created
            by the shortest paths between associated vertices and $O = V(SF_n) \setminus I$. Then
            the resistance distance between $u, w \in V(SF_n)$ is given by
            \begin{align*}
                r_{SF_n}(u, w) =&\: \frac{(d+l)(m-d-l) + (d-k)(m+k-d) + d(m-d)(v-2)}{m} \\ 
                    &- \frac{(d^2v - dk + dl - dmv + km^2)^2}{mnd(m-d)} &\text{ if $u \in O$ and $w \in I$} \\
                r_{SF_n}(u, w) =&\: \frac{(d+l)(m-d-l) + (d+k)(m-d-k) + d(m-d)(v-2)}{m} \\
                    &- \frac{d(dv+k+l-nv)^2}{mn(m-d)} &\text{ if } u,w \in O \\
                r_{SF_n}(u,w) =&\: \frac{(d-l)(m+l-d)}{m} + \frac{(d-k)(m+k-d)}{m} + (v-2)\frac{d(m-d)}{m} \\
                    &- \frac{(d^2v - dk - dl + m(k+l-dv))^2}{mnd(m-d)} &\text{ if } u,w \in I
            \end{align*}
            where $k$ is the distance from $w$ to the nearest vertex in the 2-separator, $l$ is the distance from $u$ to the nearest 2-separator, and $v$ is the number of copies of $C_m$ separating $u$ from $w$, inclusive.
            When $u$ and $w$ are both in the same copy of $C_m$, resistance distance is given by the following expressions
            \begin{align*}
                r_{SF_n}(u, w) =&\: \frac{(l+d-k)(m-l-d+k)}{m}-\frac{(d^2+km-d(m+k-l))^2}{mnd(m-d)} &\text{if $u \in O$ and $w \in I$} \\
                r_{SF_n}(u, w) =&\: \frac{(d+l+k)(m-d-l-k)}{m}-\frac{d(m-d-k-l)^2}{mn(m-d)} &\text{ if } u,w \in O \\
                r_{SF_n}(u, w) =&\: \frac{(d-l-k)(m-d+k+l)}{m}-\frac{(d-k-l)^2(m-d)^2}{mnd(m-d)} &\text{ if } u,w \in I
            \end{align*}
        \end{theorem}
            \begin{proof}
            We separate the proof into 6 cases. For the cases 1-3, we assume that $u$ and $w$ are not in the same copy of $C_m$ and for the remainder of the cases we assume that $u, w \in C_m$.
            \emph{Case 1.} Suppose, without a loss of generality, that $u \in I$ and $w \in O$. We create a 2-separation on the generalized sunflower such that $u$ and $w$ are in the same component and if $u \in C_i$, $x \in C_i$, and if $w \in C_j$, $y \in C_j$. From lemma \ref{lem:rescyc}, we obtain the following resistances
            \begin{align*}
                r_{C_i}(u, x) &= \frac{(d+l)(m-d-l)}{m} & r_{C_j}(w, y) &= \frac{k(m-k)}{m} \\
                r_{G_1}(x, y) &= \frac{d(m-d)}{m} & r_{G_1}(u, y) &= \frac{l(m-l) + k(m-k) + d(m-d)(v-2)}{m} \\
                r_{G_1}(w, x) &= \frac{k(m-k)}{m}.
            \end{align*}
            Substituting into theorem \ref{thm:mostgen} and simplifying, we obtain the desired expression. \par
            \emph{Case 2.} \par
            \emph{Case 3.} \par
            \emph{Case 4.} \par
            \emph{Case 5.} \par
            \emph{Case 6.}
        \end{proof}
        }
        % "l" := The distance from x\inG to u (also along the same part ie in I or O)
        % "k" := The distance from y\inG to w ("")
    %ONCE we are sure what these should be, we can probably simplify these a bit hopefully making them smaller too 
        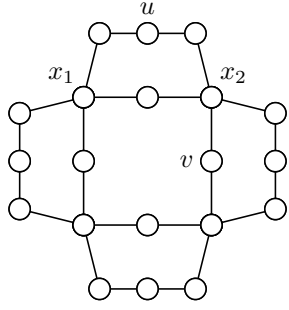
\begin{figure}[H]
        \centering
        \begin{tikzpicture}[scale = .85, semithick]
        \tikzstyle{every node}=[circle, draw, fill=white, minimum width = 8 pt, inner sep=1pt]
        \draw {
        (0,0)node{}--(1,0)node{}
        (1,0)node{}--(1,1)node{}
        (1,1)node[label=left:{$v$}]{}--(1,2)node{}
        (1,2)node[label=above right:{$x_2$}]{}--(0,2)node{}
        (0,2)node{}--(-1,2)node{}
        (-1,2)node{}--(-1,1)node{}
        (-1,1)node{}--(-1,0)node{}
        (-1,0)node{}--(0,0)node{}
        
        (-1,0)node{}--(-.75,-1)node{}
        (-.75,-1)node{}--(0,-1)node{}
        (0,-1)node{}--(.75,-1)node{}
        (.75,-1)node{}--(1,0)node{}
        (1,0)node{}--(2,.25)node{}
        (2,.25)node{}--(2,1)node{}
        (2,1)node{}--(2,1.75)node{}
        (2,1.75)node{}--(1,2)node{}
        (1,2)node{}--(.75,3)node{}
        (.75,3)node{}--(0,3)node[label=above:{$u$}]{}
        (0,3)node{}--(-.75,3)node{}
        (-.75,3)node{}--(-1,2)node{}
        (-1,2)node[label=above left:{$x_1$}]{}--(-2,1.75)node{}
        (-2,1.75)node{}--(-2,1)node{}
        (-2,1)node{}--(-2,.25)node{}
        (-2,.25)node{}--(-1,0)node{}
        };
    \end{tikzpicture}
    \label{fig:lkvarexplain}
    \caption{Here if we use $d=2$ then $l=2$ as $d(u,x)=3$ measured along the path in $D_2$ and $k=1$ since $d(x,v)=1$ measured along the path in $D_1$.}
    \end{figure}
    
    \subsection{Kirchhoff Index and Kemeny's Constant}
        \begin{theorem}
            The Kirchhoff Index of a Generalized Sunflower Graph is given by
            \begin{align*}
                \kirchhoff(F_n(C_m)) =&\: \frac{n(pm-p^2)\left(n^2(m-1)^2+m^2(4-6n)+6mn-1\right)}{12m} \\
                                     &-\frac{n(m^3+m-2-2n(m-1)^2(m+1)}{12}
            \end{align*}
        \end{theorem}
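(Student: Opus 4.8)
The plan is to follow the same strategy as the proof of Theorem~\ref{thm:completekirch}: start from $\kirchhoff(F_n(C_m))=\tfrac12\sum_{u,v}r_{F_n(C_m)}(u,v)$ as in Definition~\ref{def:kirchhoff}, partition the $n(m-1)$ vertices of $F_n(C_m)$ into the $n$ associated vertices $I$ and the $n(m-2)$ remaining vertices, which I would further classify by which of the two $x$--$y$ paths $D_1,D_2$ of their petal they lie on, and then substitute the closed-form resistances proved just above (the resistance theorem for $F_n(C_m)$), together with Lemma~\ref{lem:rescyc}, into each resulting block.

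Concretely, I would break the double sum into: (i) pairs with both endpoints in $I$; (ii) pairs lying in a common petal; (iii) pairs lying in distinct petals and not both in $I$. For block (i), two associated vertices $d$ petals apart have resistance $\tfrac{d(n-d)}{n}\cdot r_{C_m}(x,y)=\tfrac{d(n-d)\,p(m-p)}{nm}$ (the $u,v\in I$ specialization of the resistance formula, using $r_{C_m}(x,y)=\tfrac{p(m-p)}{m}$), so this block contributes $\tfrac{n}{2}\sum_{d=1}^{n-1}\tfrac{d(n-d)\,p(m-p)}{nm}$. For block (ii), the $n$-fold rotational symmetry of the flower lets me replace the sum by $n$ times a sum over a single petal, where I use the ``$u,v$ in the same copy of $C_m$'' formulas (with an $I$-endpoint treated as $x$, i.e.\ its distance parameter set to $0$), letting $l$ range over $\{1,\dots,p-1\}$ on $D_1$, over $\{1,\dots,m-p-1\}$ on $D_2$, and similarly for $k$, summed over the two path-choices of each endpoint. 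For block (iii), I would sum over the petal separation $d=2,\dots,n$, over the within-petal location of each endpoint (in $I$, or in the interior of $D_1$ or $D_2$, with $p_u\in\{p,m-p\}$ correspondingly) and its parameter $l$ or $k$, and over the $n$ choices of which petal holds the first endpoint, feeding each configuration into the ``different copies'' formula.

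Each summand so produced is a polynomial of degree at most $2$ in $d$ (the subtracted term has a numerator that is the square of an expression affine in $d$, over a denominator independent of $d$), so $\sum_{d=2}^{n}$ yields a cubic in $n$, consistent with the $n^3$-terms in the claimed formula; it is also of degree at most $2$ in each of $l$ and $k$. After expanding the squared numerators, every block reduces to a finite combination of the standard power sums $\sum d,\ \sum d^2,\ \sum d^3$ and the analogous sums in $l$ and $k$, together with mixed sums such as $\sum lk$, which factor. Collecting all block contributions and simplifying then gives the stated closed form.

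The main obstacle is purely bookkeeping: once one distinguishes, for each of $u$ and $v$, membership in $I$, in the interior of $D_1$, or in the interior of $D_2$, and then same-petal versus different-petal, there are on the order of a dozen subcases, each with its own summation ranges and its own instance of the resistance formula, and the squared numerators generate many cross terms; it is easy to double-count (the interplay of the overall $\tfrac12$, the $n$-fold petal symmetry, and the range of $d$) or to mishandle degenerate ranges such as $p=1$ (when $D_1$ has no interior vertex) or $m$ even with $p=m/2$ (when the tie-break $k\ge l$ is needed). Once the enumeration is correct the algebra is mechanical. Two natural consistency checks are to set $m=3$ and recover $\kirchhoff(SF_n)=\tfrac1{18}(4n^3+12n^2-7n)$ of Theorem~\ref{thm:KirchhoffSF}, and to verify a small case such as $n=3$ directly.
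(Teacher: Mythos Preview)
Your proposal is correct and follows essentially the same route as the paper: partition the double sum by whether the two endpoints lie in the same petal or in different petals and by which of the two $x$--$y$ paths $D_1,D_2$ each endpoint sits on, plug in the resistance formulas for $F_n(C_m)$, and reduce each block to standard power sums in $d,k,l$.

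The only organizational difference is that the paper does \emph{not} separate out the associated vertices $I$ as a block of their own; instead it absorbs an $I$-endpoint into the $D_1$ case by letting the corresponding distance parameter take the value $0$ (so its $D_1$ sums run $l=0,\dots,p-1$ and $k=0,\dots,p-1$, while the $D_2$ sums start at $1$). This collapses your dozen-or-so subcases down to six sums. Your three-way split $I$/interior-$D_1$/interior-$D_2$ is equally valid and arguably cleaner conceptually, at the cost of a few more blocks to track; the paper's convention trades fewer blocks for slightly more care in reading off which edge cases a given sum is covering. Either way the algebra is the same.
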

        \begin{proof}
        Our goal is to add the resistance distance over all possible pairs of vertices $u, v$ in our graph. As some of our formulas overlap on a few edge cases we will be careful not to overcount. The first sum below adds all the resistances $r(u,v)$ where $u,v\in D_1$ and $u,v$ are in different copies of $G$. This also catches some of the edge cases where at least one of $u,v$ is a connector vertex. The second sum is over all $u,v\in D_2$ with $u,v$ in different copies of $G$. The third sum adds all the $r(u,v)$ with $u\in D_2, v\in D_1$ with $u,v$ in different copies of $G$. The last three sums will take care of cases where $u,v$ are in the same copy of $G$. The fourth sum adds all the resistances with $u,v\in D_1$. The fifth sum adds resistances with $u\in D_2, v\in D_1$. The last sum adds resistances with $u,v\in D_2$. Notice that except for the first two sums, we must multiply by two in order to add not only $r(u,v)$ but also $r(v,u)$ as Definition \ref{def:kirchhoff} calls for.
        %VISUAL EXPLANATIONS ON PG 65 ADAM'S NOTEBOOK
        \begin{small}
        \begin{align*}
            \kirchhoff(F_n(C_m)) =& \frac{1}{2}\left[n\sum_{d=2}^n\sum_{l=0}^{p-1}\sum_{k=0}^{p-1} \left(\frac{k(m-k)+(p-l)(m-p+l)+p(m-p)(d-2)}{m}\right.\right.\\
            &\hspace{12em}\left.-\frac{(m-p)(k-l+p(d-1))^2}{nmp}\right)\\
            &+n\sum_{d=2}^n\sum_{l=1}^{m-p-1}\sum_{k=1}^{m-p-1} \left(\frac{(p+l)(m-p-l)+k(m-k)+p(m-p)(d-2)}{m}\right.\\
            &\hspace{12em}\left.-\frac{p(l-k+m+p(d-1)-md)^2}{nm(m-p)}\right) \\
            &+2n\sum_{d=2}^n\sum_{l=1}^{m-p-1}\sum_{k=1}^{p} \left(\frac{(p+l)(m-p-l)+k(m-k)+p(m-p)(d-2)}{m}\right.\\
            &\hspace{12em}\left.-\frac{(p^2(d-1)+p(k+l+m-md)-km)^2}{nmp(m-p)}\right)\\
            &+2n\sum_{l=0}^{p-1}\sum_{k=l+1}^{p-1} \left(\frac{(k-l)(m-k+l)}{m}-\frac{(k-l)^2(m-p)}{nmp}\right)\\
            &+2n\sum_{l=1}^{m-p-1}\sum_{k=1}^{p} \left(\frac{(k+l)(m-l-k)}{m}-\frac{(p(k+l)-km)^2}{nmp(m-p)}\right)\\
            &\left.+2n\sum_{l=1}^{m-p-1}\sum_{k=l+1}^{m-p-1} \left(\frac{(k-l)(m-k+l)}{m}-\frac{p(k-l)^2}{nm(m-p)}\right)\right]
        \end{align*}
        \end{small}
        \hidden{
        %%%%%%%%%%%%%%%%%%%%%%%%%%%%%%%%%%%%%%%%%%
        \begin{align*}
            \kirchoff(F_n(C_m)) =& \frac{1}{2} \left[n\sum_{v=2}^{n}\sum_{l=0}^{d-1}\sum_{k=1}^{d}\left(\frac{(d-l)(m-d+l)+(d-k)(m-d+k)+(v-2)(m-d)d}{m}\right.\right.\\
            &\hspace{12em}\left.-\frac{(m-d)^2(k+l-dv)^2}{mnd(m-d)}\right)\\
            &+n\sum_{v=2}^{n}\sum_{l=1}^{m-d-1}\sum_{k=1}^{m-d-1}\left(\frac{(m-d-l)(d+l)+(m-d-k)(d+k)+d(m-d)(v-2)}{m}\right.\\
            &\hspace{12em}\left.-\frac{d(k+l+v(d-m))^2}{mn(m-d)}\right)\\
            &+2n\sum_{v=2}^{n}\sum_{l=1}^{m-d-1}\sum_{k=0}^{d-1}\left(\frac{(m-d-l)(d+l)+(d-k)(m+k-d)+(v-2)(m-d)d}{m}\right.\\
            &\hspace{12em}\left.-\frac{(km+d^2v-d(k-l+mv))^2}{mnd(m-d)}\right)\\
            &+2n\sum_{l=0}^{d-1}\sum_{k=1}^{d-l-1}\left(\frac{(d-l-k)(m-d+k+l)}{m}-\frac{(d-k-l)^2(m-d)^2}{mnd(m-d)}\right)\\
            &+2n\sum_{l=1}^{m-d-1}\sum_{k=0}^{d-1}\left(\frac{(l+d-k)(m-l-d+k)}{m}-\frac{(d^2+km-d(m+k-l))^2}{mnd(m-d)}\right)\\
            &+\left.2n\sum_{l=1}^{m-d-1}\sum_{k=1}^{m-d-l-1}\left(\frac{(d+l+k)(m-d-l-k)}{m}-\frac{d(m-d-k-l)^2}{mn(m-d)}\right) \right]
        \end{align*}{}
        }
        %%%%%%%%%%%%%%%%%%%%%%%%%%%%%%%%%%%%%%%%%%%%%%%%%%%%%%%%%%%%%%%%%%%%%%%%%%%%%%%%%%%%%%%%%%%%%%%%%%%%%%%%%%%%%%%%%%%%%%%%%%%%%%%%%%%%%%%%%%%%
        Simplifying these sums will yield the desired result.
        \end{proof}

    \begin{theorem}
        Kemeny's Constant is given by
        \begin{align*}
            \kemeny(SF_n(C_m)) =&\:\frac{(n^2-6n+4)(pm-p^2)+m^2(2n-1)-2n-1}{6}
        \end{align*}{}
    \end{theorem}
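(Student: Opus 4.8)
The plan is to apply Theorem \ref{thm:kemeny}, which gives $\kemeny(F_n(C_m)) = \frac{1}{4q}\sum_{i,j} d_i d_j\, r_{F_n(C_m)}(i,j)$, together with the explicit resistance formulas for generalized sunflowers established above. First I would record the basic combinatorial data of $F_n(C_m)$: it has $nm$ edges, since each of the $n$ copies of $C_m$ contributes $m$ edges and the copies are edge-disjoint, so $q = nm$; every associated vertex in $I$ has degree $4$, and every other vertex has degree $2$. Hence in the weighted sum the degree product $d_i d_j$ equals $16$ when both endpoints lie in $I$, $8$ when exactly one does, and $4$ otherwise, and these are the only weights that occur.

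Next I would split the sum over all ordered pairs of distinct vertices into the same six cases that appear in the resistance theorem for $F_n(C_m)$: (i) $u,v$ in different petals with $u,v\in D_1$; (ii) in different petals with $u,v\in D_2$; (iii) in different petals with $u\in D_2$, $v\in D_1$; and the three same-petal subcases (both in $D_i$, one in $D_i$ and one in $D_j$, etc.), mirroring exactly the index ranges for $d$ (the number of petals between $u$ and $v$, inclusive), $k$, and $l$ used in the proof of the Kirchhoff index formula for $F_n(C_m)$. As in that proof, one must take care on the edge cases where a vertex coincides with $x$ or $y$ (i.e. lies in $I$) so as not to overcount; the extra wrinkle here is that those boundary terms also carry the larger degree weight, so each summand from the Kirchhoff computation gets multiplied by the appropriate degree product and the boundary corrections must be re-weighted accordingly. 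Sums over unordered pairs pick up a factor of $2$, exactly as in Definition \ref{def:kirchhoff}.

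Finally I would evaluate the resulting nested sums. Each inner summand is a polynomial in $k$ and $l$ over the fixed quantities $m$, $n$, $p(m-p)$; summing over $k$ and $l$ with the standard formulas for $\sum k$ and $\sum k^2$ gives polynomials in $d$, which are then summed over $d=2,\dots,n$; dividing by $4q = 4nm$ and collecting terms produces the stated closed form. I expect the main obstacle to be purely computational: tracking the degree-weighted boundary corrections consistently across all six cases and combining the several rational expressions so that the answer collapses to the compact polynomial $\frac{(n^2-6n+4)(pm-p^2)+m^2(2n-1)-2n-1}{6}$. A convenient check en route is to set $m=3$, which forces $p=1$ and must recover Kemeny's constant of the sunflower graph, $\frac13(n^2+2n-1)$, from Theorem \ref{thm:kemenySF}.
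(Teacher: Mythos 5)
Your proposal is correct and follows essentially the same route as the paper: apply Theorem \ref{thm:kemeny} with $q=nm$, degrees $4$ on $I$ and $2$ elsewhere, decompose the degree-weighted sum into the same case structure as the Kirchhoff index computation (with the cases further split according to whether endpoints lie in $I$, exactly as you note is forced by the degree weights), and evaluate the resulting nested polynomial sums. The $m=3$ consistency check against Theorem \ref{thm:kemenySF} is a sensible addition not present in the paper's proof.
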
{}
    \begin{proof}
    We proceed similarly as we did for the Kirchhoff index but take the degrees of the vertices into account as Theorem \ref{thm:kemeny} calls for. If $u\in I$ then $d_u = 4$. Otherwise $d_u = 2$. 
    %Also note that except for the second, third, and fourth sums, we must multiply by $2$ in order to count both $r(i,j)$ and $r(j,i)$.
    %(POTENTIAL REWORD) Note that we take advantage of symmetry by multiplying the first sum by $2$ to count

The first summation term adds the effective resistance between vertices $u,v$ in $D_1$ in different copies of $G$ where exactly one of $u,v$ is in $I$. We take advantage of symmetry and multiply by $2$ to help accomplish this. The second term adds resistances where both $u,v\in I$. The third term adds resistance where $u,v\in D_1$ and  $u,v \in O$ and $u,v$ are in different copies of $G$.

The fourth term adds resistances with  $u,v \in D_2$, $u,v \in O$, and $u,v$ in different copies of $G$.

For all the following sums we will multiply by $2$ in order to count both $r(u,v)$ and $r(v,u)$.

The fifth term adds resistances where $u\in D_2$ and $v\in I$ and $u,v$ are in different copies of $G$. The sixth term adds resistances where $u,v\in D_2$ and $u,v \in O$ and $u,v$ are in different copies of $G$.

The seventh term adds resistances where $u,v\in D_1$, $v\in I$, and $u,v\in G_k$. The eighth term adds resistances where $u,v\in D_1$, $u,v\in O$, and $u,v\in G_k$.

The ninth term adds resistances where $u\in D_2$, $v\in I$, and $u,v\in G_k$. The tenth term adds resistances where $u\in D_2$, $v\in D_1$, $u,v\in O$, and $u,v\in G_k$.

The final term adds resistances where $u,v\in D_2$, $u,v \in O$, and $u,v \in G_k$.
    
    %I-I l = 0;  I-I l=0,k=d;  O-I k=0;   I-I-S l=0;   O-I-S k=0;  
    %ORDER OF SUMS: 3(I-I); O-O; 2(O-I); 2(I-I-S); 2(O-I-S); O-O-S
    %Can insert '\displaybreak' before the '\\' at end of a line to allow a page break there
    \begin{small}
    \begin{align*}
    \kemeny(F_n(C_m)) =& \frac{1}{4mn}\left[4\cdot 2\cdot2n\sum_{d=2}^n\sum_{k=1}^{p-1} \left(\frac{k(m-k)+p(m-p)(d-1)}{m}-\frac{(m-p)(k+p(d-1))^2}{nmp}\right)\right.\\
    &+4\cdot 4n\sum_{d=2}^n\frac{p(m-p)(n-d+1)(d-1)}{nm}\\
    &+2\cdot 2n\sum_{d=2}^n\sum_{l=1}^{p-1}\sum_{k=1}^{p-1} \left(\frac{k(m-k)+(p-l)(m-p+l)+p(m-p)(d-2)}{m}\right.\\
    &\hspace{12em}\left.-\frac{(m-p)(k-l+p(d-1))^2}{nmp}\right)\\
    &+2\cdot 2n\sum_{d=2}^n\sum_{l=1}^{m-p-1}\sum_{k=1}^{m-p-1}\left(\frac{(p+l)(m-p-l)+k(m-k)+p(m-p)(d-2)}{m}\right.\\
    &\hspace{12em}\left.-\frac{p(l-k+m+p(d-1)-md)^2}{nm(m-p)}\right)\\
    &+2\cdot4\cdot2n\sum_{d=2}^n\sum_{l=1}^{m-p-1} \frac{(p+l)(m-p-l)+p(m-p)(d-1)}{m}-\frac{p(l+d(p-m))^2}{nm(m-p)}\\
    &+2\cdot2\cdot2n\sum_{d=2}^n\sum_{l=1}^{m-p-1}\sum_{k=1}^{p-1} \left(\frac{(p+l)(m-p-l)+k(m-k)+p(m-p)(d-2)}{m}\right.\\
    &\hspace{12em}\left.-\frac{(p^2(d-1)+p(k+l+m-md)-km)^2}{nmp(m-p)}\right)\\
    &+2\cdot4\cdot2n\sum_{k=1}^{p-1} \left(\frac{k(m-k)}{m}-\frac{k^2(m-p)}{nmp}\right)\\
    &+2\cdot2\cdot2n\sum_{l=1}^{p-1}\sum_{k=l+1}^{p-1} \left(\frac{(k-l)(m-k+l)}{m}-\frac{(k-l)^2(m-p)}{nmp}\right)\\
    &+2\cdot4\cdot2n\sum_{l=1}^{m-p-1} \left(\frac{(p+l)(m-p-l)}{m}-\frac{p(m-p-l)^2}{nm(m-p)}\right)\\
    &+2\cdot2\cdot2n\sum_{l=1}^{m-p-1}\sum_{k=1}^{m-p-1} \left(\frac{(k+l)(m-k-l)}{m}-\frac{(p(k+l)-km)^2}{nmp(m-p)}\right)\\
    &+2\cdot2\cdot2n\sum_{l=1}^{m-p-1}\sum_{k=l+1}^{m-p-1} \left.\left(\frac{(k-l)(m-k+l)}{m}-\frac{p(k-l)^2}{nm(m-p)}\right)\right]
\end{align*}
\end{small}
%%%%%%%%%%%%%%%%%%%%%%%%%%%%%%%%%%%%%%%%%%%%%%%%%%%%%%%%%%%%%%
\hidden{
    \begin{align*}
        \kemeny(SF_n(C_m)) =&\: \frac{1}{4mn} \left[(4*2)n\sum_{v=2}^n\sum_{k=1}^{d-1} \left(\frac{(d-k)(m+k-d)+d(m-d)(v-1)}{m}-\frac{(d^2v-dk+m(k-dv))^2}{mnd(m-d)} \right)\right. \\
        &+(4*4)n\sum_{v=2}^n \left(\frac{d(m-d)(v-1)}{m}-\frac{d(m-d)(v-1)^2}{mn}\right) \\
        &+(2*2)n\sum_{v=2}^n\sum_{l=1}^{d-1}\sum_{k=1}^{d-1} \left(\frac{(d-l)(m+l-d)+(d-k)(m+k-d)+(v-2)(m-d)d}{m}\right.\\
        &\hspace{12em}\left.- \frac{(d^2v - dk - dl + m(k+l-dv))^2}{mnd(m-d)}\right) \\
        &+(2*2)n\sum_{v=2}^n\sum_{l=1}^{m-d-1}\sum_{k=1}^{m-d-1} \left(\frac{(d+l)(m-d-l)+(m-d-k)(d+k)+(v-2)(m-d)d}{m}\right.\\
        &\hspace{15em}\left.-\frac{d(dv+k+l-mv)^2}{mn(m-d)}\right) \\
        &+(2*4)*(2n)\sum_{v=2}^n\sum_{l=1}^{m-d-1} \left(\frac{(d+l)(m-d-l)+d(m-d)(v-1)}{m}-\frac{d(l-v(m-d))^2}{mn(m-d)} \right)\\
        &+(2*2)*(2n)\sum_{v=2}^n\sum_{l=1}^{m-d-1}\sum_{k=1}^{d-1} \left(\frac{(d+l)(m-d-l)+(d-k)(m+k-d)+(v-2)(m-d)d}{m}\right.\\
        &\hspace{15em}\left.-\frac{(km+d^2v-d(k-l+mv))^2}{mnd(m-d)}\right) \\
        &+(2*4)*(2n)\sum_{k=1}^{d-1} \left(\frac{(d-k)(m-d+k)}{m}-\frac{(d-k)^2(m-d)}{mnd}\right) \\
        &+(2*2)(2n)\sum_{l=1}^{d-1}\sum_{k=1}^{d-l-1} \left(\frac{(d-l-k)(m-d+k+l)}{m}-\frac{(d-k-l)^2(m-d)}{mnd} \right)\\
        &+(2*4)(2n)\sum_{l=1}^{m-d-1}\left(\frac{(l+d)(m-l-d)}{m}-\frac{d(d-m+l)^2}{mn(m-d)}\right) \\
        &+(2*2)(2n)\sum_{l=1}^{m-d-1}\sum_{k=1}^{d-1} \left(\frac{(l+d-k)(m-l-d+k)}{m}-\frac{(d^2+km-d(m+k-l))^2}{mnd(m-d)}\right)\\
        &+\left.(2*2)(2n)\sum_{l=1}^{m-d-1}\sum_{k=1}^{m-d-l-1} \left(\frac{(d+l+k)(m-d-l-k)}{m}-\frac{d(m-d-k-l)^2}{mn(m-d)} \right) \right]
    \end{align*}{}
    }
%%%%%%%%%%%%%%%%%%%%%%%%%%%%%%%%%%%%%%%%%%%%%%%%%%%%%%%%%%%%%%
    Simplifying these summations will yield the desired result.
    \end{proof}
    Comparing these results to the bounds from  Theorems \ref{thm:kirchbounds}, \ref{thm:kembounds} we find that as $n\to\infty$ the ratio of the upper bound for the Kirchhoff index to the actual Kirchhoff index approaches $\frac{3m^2}{(m-1)^2}$ and the ratio of the upper bound for Kemeny's constant to the actual Kemeny's constant approaches $3(m-1)^2$.
    
%    \section{Conclusion}
    
%    In this paper we have obtained a formula for resistance distance for a large family of graphs and proved that resistance becomes unbounded as this family of graph grows. An interesting open question is to determine for which graphs resistance distance grows without bound and for which graphs does resistance distance behave more like the random geometric graphs discussed in \cite{lostinspace}. We also found bounds for the Kirchhoff index and Kemeny's constant of these flower graphs. Two specific classes of flower graphs were examined in more detail and certainly more could be studied in the future.
    
%    IDEAS: Formula for resistance distance; Lost in space ideas (resistance grows without bound); Kemeny/Kirchhoff Bounds and stuff; "We've shown some examples but certainly other types of graphs can be considered" or something like that.

%%%%%%%%%%%%%%%%%%%%%%%%%%%%%%%%%%%%%%%%%%%%%%%%%%%%%%%%%%%%%%%%%%%%%%%%%%%%%%%%%%%%%%%%%%%%%%%%%%%%%%%%%%%%%%%%%%%%%%%%%%%%%%%%%%%%%%%%%%
%%%%%%%%%%%%%%%%%%%%%%%%%%%%%%%%%%%%%%%%%%%%%%%%%%%%%%%%%%%%%%%%%%
\hidden{
    
\section{Generalized Sunflowers}
    \begin{definition}
        A \emph{generalized sunflower graph} is a subclass of flower graphs where we let $G = C_m$ for some $m \geq 3$ and we select $u,v$ to be adjacent vertices. We denote a sunflower graph on $n$ copies of $C_m$ as $SF_{m, n}$\\
        The construction of $SF_{m,n}$ creates a cycle on $n$ vertices consisting of the $u,v$ we selected. 
        We refer to vertices on this cycle as the \emph{inner vertex set} of $SF_n$.
    \end{definition}
    
    \subsection{Formulas for Resistance Distance}
        In this section we will obtain formulae for the resistance distance between any two nodes $u, v$ in a generalized sunflower graph, denoted by $r_{SF_{m,n}}(u,v)$. However, we need the following lemma before we may continue our derivation:
        
        In this next theorem we will present four different formulas for resistance distance. While we only necessarily need two formulas, we have broken them into more cases to help us with some later results.
    % TODO: Are k, l defined ok? The distance around the outside of the petal
    \begin{theorem}\label{thm:rdistGSF}
        Given a General Sunflower graph with $n$ copies of $C_m$, let $I$ be the set of inner vertices of $SF_{m,n}$ and let $O = V(SF_{m,n}) \setminus I$. Then given two vertices $u,v \in V(SF_{m,n})$, let $d$ be as defined in Theorem \ref{thm:mostgen}. Let $k = dist(x,u)$ and $l = dist(y,v)$ where $x, y$ are also as defined in Theorem \ref{thm:mostgen}. Both of the $k, l$ distances must be the length of the path that does not go through the inner vertex set, $I$. Also, choose $u$ to be the vertex with the smaller $k$ value. Then resistance distance is given by 
        
        \begin{align}
            r_{SF_{m,n}}(u,v) = \frac{d(n-d)(m-1)}{mn} &\text{ if } u, v \in I \\
            r_{SF_{m,n}}(u,v) = \frac{m(d+k+l)-d-k(k+2)-l(l+2)}{m}-\frac{(k+l+d-dm)^2}{mn(m-1)} &\text{ if } u, v \in O\\ 
            r_{SF_{m,n}}(u,v) = \frac{(m-1)(d-1)}{m}+\frac{(l+1)m-(l+1)^2}{m}-\frac{(l-md+d)^2}{mn(m-1)} &\text{ if } u \in I \text{ and } v \in O\\
            r_{SF_{m,n}}(u,v) = \frac{(k+l+1)m-(k+l+1)^2}{m}-\frac{(k+l+1-m)^2}{mn(m-1)} &\text{ if } u, v \in G_i
            %(with this definition of l, equivalent to using l = m - l - 1 in the formulas I had previously)
            %with cycle-petal (I guess petal-petal too) you have to be sort of careful to make sure you really understand what l/k is. As of now, orientation matters (which way we're going around the big cycle). As long as stay consistent with most gen definition though should be fine
        \end{align}
    % We could really only have 2 formulas if we'd like (p-p and p-p same). Petal-Petal covers just about everything
    \end{theorem}
    \begin{proof}
    To come to these formulas, we will use Theorem \ref{thm:mostgen}, which calls for $r_{C_m}(x,y)$, $r_{C_m}(u,x)$, $r_{C_m}(u,y)$, $r_{C_m}(v,x)$, and $r_{C_m}(v,x)$. By lemma (\ref{lem:rescyc}) we will have $r_{C_m}(x,y) = \frac{m-1}{m}$ regardless of what $u,v$ we choose. The rest of the values will vary.
    
    \emph{Case 1.} Both $u$ and $v$ are in the inner vertex set. Then choose $u = x$ and $v = y$. Thus we will have $r_{C_m}(x,y)=r_{C_m}(x,v)=r_{C_m}(u,y)=\frac{m-1}{m}$. We will also have that $r_{C_m}(u,x)=r_{C_m}(v,y)=0$. Using these values in Theorem \ref{thm:mostgen} will give us
    \begin{align*}
        r_{SF_{m,n}}(u,v) &= \frac{m-1}{m}+\frac{m-1}{m}+(d-2)\frac{m-1}{m}-\frac{[0+0-\frac{m-1}{m}-\frac{m-1}{m}-2(d-1)\frac{m-1}{m}]^2}{4n\frac{m-1}{m}}\\
        &= \frac{d(m-1)}{m}-\frac{[-2d(m-1)]^2}{4mn(m-1)}\\
        &= \frac{d(m-1)}{m}-\frac{d^2(m-1)}{mn}\\
        &= \frac{d(n-d)(m-1)}{mn}
    \end{align*}{}
    
    \emph{Case 2.} Both $u,v$ are in the outer vertex set and in different copies of $C_m$. Thus we have $r_{C_m}(x,y) = \frac{m-1}{m}$, $r_{C_m}(u,x) = \frac{km-k^2}{m}$, $r_{C_m}(u,y) = \frac{(k+1)m-(k+1)^2}{m}$, $r_{C_m}(v,x) = \frac{(l+1)m-(l+1)^2}{m}$, and $r_{C_m}(v,y) = \frac{lm-l^2}{m}$. Plugging these into Theorem \ref{thm:mostgen} and simplifying yields the desired result.
    
    \emph{Case 3.} $u$ is in the inner vertex set and $v$ is in the outer vertex set. Consider $u$ to be the same as vertex $x$. Then $r_{C_m}(u,x) = 0$ and $r_{C_m}(x,y) = r_{C_m}(u,y) = \frac{m-1}{m}$. Also $r_{C_m}(v,x) = \frac{(l+1)m-(l+1)^2}{m}$ and $r_{C_m}(v,y) = \frac{lm-l^2}{m}$. Using these values with Theorem \ref{thm:mostgen} gives the desired result.
    
    \emph{Case 4.} $u,v$ are in the same copy of $C_m$. Here we will need to use Theorem \ref{thm:mostgensamepet}. We will have $r_{C_m}(x,y) = \frac{m-1}{m}$, $r_{C_m}(u,x) = \frac{km-k^2}{m}$, $r_{C_m}(u,y) = \frac{(k+1)m-(k+1)^2}{m}$, $r_{C_m}(v,x) = \frac{(l+1)m-(l+1)^2}{m}$, $r_{C_m}(v,y) = \frac{lm-l^2}{m}$, and $r_{C_m}(u,v) = \frac{(k+l+1)m-(k+l+1)^2}{m}$. Using these values with Theorem \ref{thm:mostgensamepet} and simplifying gives the desired result.
    \end{proof}
    
    %max(r(u,v)) is where?
    
    %lim_n\to\infty (max_n+1 - max_n) = (m-1)/(4m)
    
    \subsection{Kirchhoff Index and Kemeny's Constant}
    %Brief explanation of Kirchhoff Index?
    
    \begin{theorem}\label{thm:KircGSF}
    The Kirchhoff Index of a generalized sunflower graph $SF_{m,n}$ is given by
    \begin{align*}
        \kirchoff(SF_{m,n}) = \frac{n(m-1)}{12m}(n^2-1+m^3(2n-1)+m^2(n^2-6n+3)-2m(n-1)^2)
    \end{align*}
    \end{theorem}
    \begin{proof}
    To account for every possible resistance distance when both $i,j \in I$ we will take the sum of that formula from $d = 1$ to $d = n-1$ and multiply that by $n$. When both $i,j \in O$ and not in the same copy of $C_m$, we will sum over $d = 2$ to $d = n$, $k = 1$ to $k = m-2$, and $l = 1$ to $l = m-2$ and multiply by $n$. When we have $i \in I$ and $j \in O$ we will sum over $d = 1$ to $d = n$ and $l = 1$ to $l = m-2$ and multiply by $2n$. When we have $i,j$ in the same copy of $C_m$ we will sum over $k = 1$ to $k = m-2$ and $l = 1$ to $l = m - 2 - k$ and multiply by $2n$. Hence we have
    \begin{align*}
        \kirchoff(SF_{m,n}) =&\: \frac{1}{2} \left(n \sum_{d=1}^{n-1} \frac{d(n-d)(m-1)}{mn} \right.\\
        &+ 2n\sum_{d=1}^{n}\sum_{l=1}^{m-2}\left(\frac{(m-1)(d-1)}{m}+\frac{(l+1)m-(l+1)^2}{m}-\frac{(l-md+d)^2}{mn(m-1)}\right) \\ 
        &+ n\sum_{d=2}^{n}\sum_{k=1}^{m-2}\sum_{l=1}^{m-2}\left(\frac{m(d+k+l)-d-k(k+2)-l(l+2)}{m}-\frac{(k+l+d-dm)^2}{mn(m-1)}\right) \\
        &+\left. 2n\sum_{k=1}^{m-2}\sum_{l=1}^{m-2-k}\left(\frac{(k+l+1)m-(k+l+1)^2}{m}-\frac{(k+l+1-m)^2}{mn(m-1)}\right)\right)
    \end{align*}{}
    Simplifying this expression gives the desired result.
    \end{proof}
    
    %Brief explanation of Kemeny's Constant
    %Def will come from Kemeny's paper corollary 2.4
    %Should  we copy it word for word or modify it slightly to look more like our context? (I have modified a little but even now it's still in some weird middle. Definitely take another look at)

    % We have like 3 different "m" going on...
    \begin{theorem}\label{thm:kemGSF}
    Kemeny's constant for a Sunflower Graph $SF_{m,n}$ is given by
    \begin{equation*}
        \kemeny(SF_{m,n}) = \frac{m^2(2n-1)+m(n^2-6n+4)-n^2+4n-5}{6}
    \end{equation*}
    \end{theorem}
    \begin{proof}
    For a sunflower graph $SF_{m,n}$ notice that the number of edges in a sunflower graph is $q = mn$. Also notice that $d^TRd = d_i d_j\sum_{i,j \in SF_{m,n}} r_{SF_{m,n}}(i,j)$. Examining the sunflower graph we see that any vertex $i \in I$ will have a degree $d_i = 4$ and any vertex $j \in O$ will have a degree $d_j = 2$. With this information and using the same sums we used to account for all the vertices as we did for the Kirchhoff Index in Theorem \ref{thm:KircGSF} we have
    \begin{align*}
        \kemeny(SF_{m,n}) =&\: \frac{1}{4mn} \left( 16n \sum_{d=1}^{n-1} \frac{d(n-d)(m-1)}{mn} \right. \\
        &+ 16n\sum_{d=1}^{n}\sum_{l=1}^{m-2}\left(\frac{(m-1)(d-1)}{m}+\frac{(l+1)m-(l+1)^2}{m}-\frac{(l-md+d)^2}{mn(m-1)}\right)\\ 
        &+ 4n\sum_{d=2}^{n}\sum_{k=1}^{m-2}\sum_{l=1}^{m-2}\left(\frac{m(d+k+l)-d-k(k+2)-l(l+2)}{m}-\frac{(k+l+d-dm)^2}{mn(m-1)}\right)\\
        &+\left. 8n\sum_{k=1}^{m-2}\sum_{l=1}^{m-2-k}\left(\frac{(k+l+1)m-(k+l+1)^2}{m}-\frac{(k+l+1-m)^2}{mn(m-1)}\right)  \right)
    \end{align*}{}
    Simplifying gives the desired result.
    \end{proof}
}
%%%%%%%%%%%%%%%%%%%%%%%%%%%%%%%%%%%%%%%%%%%%%%%%%%%%%%%%%%%%%%%%%%%%%%%%%%%%%%%%%%%%%%%%%%%%%%%%%%%%%%%%%%%%%%%%%%%%%%%%%%%%%%%%%%%%%%%%%%%%%%%%%%%%%%%%%%%%%%%%%%%%%%%%%%%%%%%%%%%%%%%%%%%%%%%%

\bibliographystyle{plain}
\bibliography{sources}

\end{document}